\numberwithin{equation}{section}
\newtheorem{definition}{Definition}[section]
\newtheorem{theorem}[definition]{Theorem}
\newtheorem{proposition}[definition]{Proposition}
\newtheorem{lemma}[definition]{Lemma}
\newtheorem{corollary}[definition]{Corollary}
\newtheorem{remark}[definition]{Remark}
\def\rmd{\mathrm d}
\def\bbR{\mathbb R}
\def\bbZ{\mathbb Z}
\def\bbC{\mathbb C}
\begin{document}
\title{Sampling Density for Gabor Phase Retrieval}

\author[T. Chen]{Ting Chen}
\address{School of Mathematical Sciences and LPMC,
Nankai University,
Tianjin,
China}
\email{t.chen@nankai.edu.cn}
	
\author{Hanwen Lu}
\address{School of Mathematical Sciences and LPMC,
Nankai University,
Tianjin,
China}
\email{2013537@mail.nankai.edu.cn}

\author[W. Sun]{Wenchang Sun}
\address{School of Mathematical Sciences and LPMC,
Nankai University,
Tianjin,
China}
\email{sunwch@nankai.edu.cn}

\date{}

\keywords{Phase retrieval, Gabor transform, sampling density, square root lattice}

\thanks{Corresponding author: Wenchang Sun.}
\thanks{This work was partially supported by the
National Natural Science Foundation of China (12271267, 12571104 and  U21A20426) and the Fundamental Research Funds for the Central Universities.}

\begin{abstract}
Gabor phase retrieval stands for
recovering a square integrable function
up to a global phase from  absolute values of its Gabor transform. In this paper, we study
Gabor phase retrieval from discrete samples.
We consider three types of sampling sequences,
which include square root lattices, square root sequences on two intersecting lines and on three parallel lines respectively. In all cases
we give the optimal sampling density for a sequence
to do Gabor phase retrieval.
\end{abstract}

\maketitle

\section{Introduction and main results}

Phase retrieval
arises in recovering a signal from its
intensity measurements,
which is widely applied in X-ray
crystallography, optical imaging, electron microscopy
and many other fields \cite{Shechtman2015}.
Recently, phase retrieval has been studied in various
aspects, which include
phaseless sampling in shift invariant spaces
\cite{%
ChenSun2022,
ChenChengSun2022,
ChenChengSunWang2020,
ChengWuXian2025,
FreemanOikhbergPineauTaylor2024,
Grochenig2020,
LiLiuZhang2021,
LiWeiFan2024,
QianTan2016,
ShenoyMulletiSeelamantula2016,
Sun2021},
phase retrieval for quaternion signals \cite{LiYang2025,YangLi2024},
affine linear measurements~\cite{Gao2025,GaoSunWangXu2018,HuangXu2024},
masked Fourier measurements~\cite{LiLi2021a,LiLi2021b} and
sparse signals~\cite{XiaXu2026},
and the stability of recovery
\cite{AlaifariDaubechiesGrohsYin2019,
AlaifariGrohs2017,
AlaifariWellershoff2021b,
CahillCasazzaDaubechies2016,
GrohsKoppensteinerRathmair2020,
GrohsLiehrRathmair2025a,
HuangLiXu2025,
HuangLiXu2026,
XiaXuXu2025,
Zhong2023}.

For the short time  Fourier transform
\begin{equation}\label{eq:window F}
  V_gf(t,\omega)
     = \int_{\bbR} f(x) \bar g(x-t) e^{-2\pi i x \omega}
     \rmd x,
\end{equation}
where $g\in L^2(\bbR)$ is a window function
and $f\in L^2(\bbR)$ is a signal,
it is asked if it is possible to recover $f$ up to a global
phase from the absolute values of its
short time  Fourier transform on some set $\Lambda\subset \bbR^n$.
If it is the case, i.e.,
$|V_gf_1(t,\omega)|=|V_gf_2(t,\omega)|$
for all $(t,\omega)\in\Lambda$
implies that
there is some constant phase
$\alpha$ such that $f_1= e^{i\alpha}f_2$,
we say that $\Lambda$ does phase retrieval for the window function
$g$.

Whenever the window function is the Gaussian
\[
  \varphi(x):=e^{-\pi x^2},
\]
the short time  Fourier transform is also
known as the Gabor transform.
And we say that $\Lambda$ does Gabor phase retrieval
if it does phase retrieval for the Gaussian window $\varphi$.

For general window functions
and bandlimited signals,
Alaifari and Wellershoff
\cite{AlaifariWellershoff2021a}
showed that an equally spaced sequence in certain line
does phase retrieval for real-valued bandlimited functions.
Zhang, Guo, Liu and Li~\cite{ZhangGuoLiuLi2023} extended this result
for vector-valued functions.
Matthias~\cite{Wellershoff2023}
showed that two sequences
taken respectively
from two lines do  Gabor phase retrieval for
bandlimited functions.
See also Grohs and Liehr \cite{GrohsLiehr2023b}
and Wellershoff~\cite{Wellershoff2024b}
for the phase retrieval of complex-valued
compactly supported functions,
and
\cite{Grohs2019,
Grohs2022,
GrohsLiehr2024,
Alaifari2024,
AlaifariGrohs2021}
for the stability of Gabor phase retrieval.

In this paper, we focus on density conditions of
Gabor phase retrieval for square integrable functions.

We see from the frame theory that for a well-chosen window function, any function in $L^2(\bbR)$ is uniquely
determined by its sampled values on a lattice with
sufficiently large density~\cite{Christensen2016,Daubechies1992,Groechenig2001}.
However, things become quite different for phase retrieval.
Alaifari and Wellershoff \cite{AlaifariWellershoff2022}
proved that any lattice does not do Gabor
phase retrieval.
And in \cite{GrohsLiehr2023a},
Grohs and Liehr showed that for any window function,
any lattice does not do phase retrieval.
Recently, Grohs,  Liehr and Rathmair \cite{GrohsLiehrRathmair2025b}
gave a positive result which
shows that
for well chosen four window functions,
a lattice with certain density does phase retrieval.

An interesting problem arises. Is it possible
for a sequence of points to do phase retrieval
with only one window function?

The answer is positive.
In \cite{GrohsLiehr2025},
Grohs and Liehr
studied Gabor phase retrieval
 for $d$-dimensional signals.
Here we cite a $1$-dimensional version.
For any $0<\nu<1$, denote
\[
  \bbZ^{\nu}:= \{\pm n^{\nu}:\, n\ge 0, n\in\bbZ\}.
\]
Grohs and Liehr showed that for a large class of window functions,
the square root lattice
\[
  \Lambda:= a\bbZ^{1/2} \times b\bbZ^{1/2}
\]
does phase retrieval when $a$ and $b$ are small enough.
As a consequence, they obtained that if
\[
    0<a<\frac{1}{\sqrt{2\pi e}}
    \quad \mathrm{and}\quad
    0<b<\frac{1}{\sqrt{2\pi e}},
\]
then $\Lambda$ does Gabor phase retrieval.

In this paper, we improve this result.
We show that for the square root lattice
to do Gabor phase retrieval, only one of $a$ and $b$
needs to be small enough.
Moreover, we obtain the optimal sampling density
for  Gabor phase retrieval.
Specifically, we have the following result.

\begin{theorem} \label{thm:t1}
Let $ \Lambda=a\mathbb{Z}^{1/2}\times b\mathbb{Z}^{1/2}$, where $a,b>0$.
If $a<1$ or $b<1$, then
 $\Lambda$ does   Gabor phase retrieval.

Moreover, if
$a=b>1$,
then $\Lambda$ does not do Gabor phase retrieval.
\end{theorem}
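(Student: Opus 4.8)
The plan is to pass from the Gabor transform to the Bargmann--Fock side, where the question becomes one about zero sets of entire functions, and then to exploit the product structure of $\Lambda$ by slicing in the frequency variable. Recall that the Gabor transform of $f$ is, up to a Gaussian factor, the Bargmann transform $\mathcal B f$, an entire function with $|\mathcal B f(z)|\le C e^{\frac{\pi}{2}|z|^2}$ and
\[
  |V_\varphi f(t,\omega)| = e^{-\frac{\pi}{2}(t^2+\omega^2)}\,\bigl|\mathcal B f(t-i\omega)\bigr|.
\]
Writing $F_j=\mathcal B f_j$, the hypothesis $|V_\varphi f_1|=|V_\varphi f_2|$ on $\Lambda$ is equivalent to $|F_1(z)|=|F_2(z)|$ for every $z$ in the complex grid $a\mathbb{Z}^{1/2}-i\,b\mathbb{Z}^{1/2}$, while the desired conclusion $f_1=e^{i\alpha}f_2$ is equivalent to $F_1=cF_2$ with $|c|=1$. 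Both $F_j$ are entire of order two and type at most $\pi/2$.

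First I would fix a frequency $\omega_0=b\sqrt m$ and introduce, with $F^\sharp(z):=\overline{F(\bar z)}$, the entire function
\[
  \Phi_j^{(m)}(\tau):=F_j(\tau-i\omega_0)\,F_j^\sharp(\tau+i\omega_0),
\]
which satisfies $\Phi_j^{(m)}(t)=|F_j(t-i\omega_0)|^2$ on the real axis and obeys $|\Phi_j^{(m)}(\tau)|\le C e^{\pi|\tau|^2}$, so it is of order two and type at most $\pi$. The grid hypothesis says precisely that $\Phi_1^{(m)}-\Phi_2^{(m)}$ vanishes on $a\mathbb{Z}^{1/2}$ for every $m\ge 0$. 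The heart of the matter is then the following sharp uniqueness statement, which I expect to be the main obstacle: \emph{if $h$ is entire with $|h(\tau)|\le Ce^{\pi|\tau|^2}$ and $h$ vanishes on $a\mathbb{Z}^{1/2}$ with $a<1$, then $h\equiv0$.} The constant is dictated by $\sin(\pi\tau^2/a^2)$, which vanishes on $a\mathbb{Z}^{1/2}\cup i\,a\mathbb{Z}^{1/2}$ and has type $\pi/a^2$: for $a<1$ this type strictly exceeds $\pi$, so no nonzero function of type at most $\pi$ can vanish on all of $a\mathbb{Z}^{1/2}$, while for $a\ge 1$ it shows the statement must fail. I would prove the lemma by establishing that $\pi/a^2$ is the minimal type of a nonzero entire function vanishing on $a\mathbb{Z}^{1/2}$; the lower bound is the delicate point, which I would extract from Jensen's formula together with the indicator of such functions (equivalently, after $w=\tau^2$, from the zero distribution of an associated function of exponential type), the factor relating the circular mean of $\log|h|$ to its type being exactly what pins the threshold at $a=1$.

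Granting the lemma, for $a<1$ each difference $\Phi_1^{(m)}-\Phi_2^{(m)}$ vanishes identically, so $F_1(\tau-i\omega_0)F_1^\sharp(\tau+i\omega_0)=F_2(\tau-i\omega_0)F_2^\sharp(\tau+i\omega_0)$ for all $\tau$ and all $\omega_0=b\sqrt m$. Assuming $F_2\not\equiv0$ and setting $R:=F_1/F_2$ (so $R^\sharp=F_1^\sharp/F_2^\sharp$), the case $m=0$ gives $R\,R^\sharp\equiv1$, and the general case then yields $R(\tau+ib\sqrt m)=R(\tau-ib\sqrt m)$ for every $m$. Thus $R$ is meromorphic and admits every $2ib\sqrt m$ as a period; since $\{2b\sqrt m:m\ge1\}$ generates a dense subgroup of $\bbR$, $R$ is constant along vertical lines and hence equal to a constant $c$, and $R R^\sharp\equiv1$ forces $|c|=1$. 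Therefore $F_1=cF_2$ and $f_1=c f_2$, which is Gabor phase retrieval. The case $b<1$ follows by the same argument after a rotation of phase space by $\pi/2$ (a Fourier transform), which preserves the Gaussian window and interchanges the roles of $a$ and $b$.

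For the last assertion I would show that when $a=b>1$ the slicing obstruction genuinely occurs, by constructing $f_1,f_2$ with $|V_\varphi f_1|=|V_\varphi f_2|$ on $\Lambda$ but $f_1\ne c f_2$. Now $a>1$ makes $a\mathbb{Z}^{1/2}$ sub-critical: the function $\sin(\pi\tau^2/a^2)$ has type $\pi/a^2<\pi$, so there is genuine freedom to arrange $\Phi_1^{(m)}-\Phi_2^{(m)}\not\equiv 0$ while still vanishing on $a\mathbb{Z}^{1/2}$ for every $m$. The task is to realise this freedom by honest Bargmann transforms of $L^2$ functions: I would seek $F_1,F_2$ of Gaussian-chirp times canonical-product form, matched so that $|F_1|=|F_2|$ holds simultaneously on every horizontal line $\mathrm{Im}\,z=-a\sqrt m$, and then verify that $F_1$ is not a unimodular multiple of $F_2$. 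The point to check is that the construction can be carried out consistently across all slices while keeping both functions in the Fock space; the availability of the type-$(\pi/a^2)$ factor $\sin(\pi\tau^2/a^2)$ when $a>1$ is exactly what makes this possible, and is the reason the threshold $a=1$ is sharp.
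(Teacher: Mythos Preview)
Your argument for $a<1$ is correct and close in spirit to the paper's, though the final step differs. Both rest on the same Carlson-type lemma for order-$2$ functions: an entire $h$ with $|h(\tau)|\le Ce^{\pi|\tau|^2}$ vanishing on $a\bbZ^{1/2}$ must be identically zero when $a<1$. The paper isolates exactly this as a corollary of its higher-order Carlson theorem, proved via the even/odd split and substitution $w=\tau^2$ you sketch. Where the paths diverge is in going from ``$|F_1|=|F_2|$ on the horizontal lines $\mathrm{Im}\,z=-b\sqrt m$'' to $F_1\sim F_2$: you show the meromorphic ratio $R=F_1/F_2$ acquires every $2ib\sqrt m$ as a period and conclude by density of the group these generate, while the paper keeps only three lines (at heights $b,\,b\sqrt2,\,b\sqrt3$) and invokes Wellershoff's result that two entire functions whose moduli agree on three parallel lines with irrational distance ratio must agree up to a unimodular constant. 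Your route is self-contained for the square-root lattice; the paper's is more modular, since the three-lines mechanism simultaneously proves the more general Theorem~\ref{thm:t2b}, from which Theorem~\ref{thm:t1} is then a one-line corollary. The $b<1$ case via the Fourier symmetry is fine.

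The negative part, however, is not yet a proof. You correctly locate the obstruction (type $\pi/a^2<\pi$ leaves room when $a>1$) and gesture at ``Gaussian-chirp'' factors, but you give no actual construction, and ``canonical-product form'' points the wrong way. The paper's counterexample uses no products over zeros at all: one takes the single multiplier $Q(z)=e^{-icz^2}$ with $c=\pi/a^2\in(0,\pi)$ and checks that for $z=\omega+it\in\Gamma^*_\Lambda$ one has $e^{-icz^2}=e^{-ic(b^2m-a^2n)}e^{2ct\omega}$; when $a=b$ and $c=\pi/a^2$ the first factor lies in $\{\pm1\}$, so $Q$ is \emph{real} on the entire grid. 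One then produces linearly independent $p_1,p_2\in L^2$ with $\hat h_{p_2}=Q\,\hat h_{p_1}$ (here $c<\pi$ is exactly what keeps the construction inside the Fock space) and sets $f_1=c_1p_1+c_2p_2$, $f_2=\bar c_1p_1+\bar c_2p_2$ with $|c_1|=|c_2|=1$ and $c_1/c_2\notin\bbR$. Reality of $Q$ on $\Gamma^*_\Lambda$ gives $|V_\varphi f_1|=|V_\varphi f_2|$ on $\Lambda$, while linear independence of $p_1,p_2$ forces $f_1\nsim f_2$. The missing idea in your sketch is not the slack in the type but the observation that the chirp $e^{-i(\pi/a^2)z^2}$ is simultaneously real at every grid point; your slice-by-slice viewpoint obscures this, because the relevant condition couples the real and imaginary parts of $z$.
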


In \cite{Wellershoff2024a},
Wellershoff studied the phase retrieval
for entire functions. It was shown that
an entire function
with finite order is uniquely determined up to a global phase
by its absolute values on three parallel lines
for which the ratio of distances between these lines are
irrational numbers.
With   help of this result, we show that
certain sampling set on such three lines
does Gabor phase retrieval. Moreover,
we obtain the optimal sampling density.

Let $\mathbb Q$ be the set consisting of all rational numbers.
For any $a,\nu>0$ and $\theta\in[0,2\pi]$, denote
\[
  a\bbZ^{\nu}(\sin\theta ,\cos\theta )
  := \{ (\lambda \sin\theta ,\lambda \cos\theta ):\, \lambda \in a\bbZ^{\nu}\}.
\]
When $\theta=\theta_0$ is fixed, the distance between
two parallel lines $\{(t_l,\omega_l)+x(\sin\theta_0, \cos\theta_0):\,x\in\bbR\}$, $l=1,2$ is
\[
  |(\omega_1-\omega_2)\sin\theta_0
        - (t_1-t_2)\cos\theta_0|.
\]

\begin{theorem}\label{thm:t2b}
Suppose that
$\Lambda = \cup_{l=1}^3 \big((t_l,\omega_l)+a\bbZ^{\nu}(\sin\theta_0,\cos\theta_0) \big)$, where $a$ and $\nu$ are positive numbers,
$\theta_0 \in [0,2\pi]$,
$(t_1,\omega_1)$,
$(t_2,\omega_2)$ and
$(t_3,\omega_3)$ are three points and
\[
  d_l:=|(\omega_l-\omega_3)\sin\theta_0
        - (t_l-t_3)\cos\theta_0|>0, \quad l=1,2.
\]
Then  $\Lambda$ does Gabor phase retrieval if
$d_1/d_2\not\in  \mathbb Q$ and either $\nu<1/2$
or  $\nu=1/2$ with $a<1$.

Moreover, $\Lambda$ does not do Gabor phase retrieval
if one of the following conditions is satisfied,
\begin{enumerate}
\item $d_1/d_2\in  \mathbb Q$;
\item $\nu>1/2$;
\item $\nu=1/2$, $a>1$,
$(t_1,\omega_1)$,
$(t_2,\omega_2)$ and
$(t_3,\omega_3)$ are in the same line whose direction is $(\cos\theta_0, -\sin\theta_0)$,
and
$d_l=a n_l^{1/2}$ for some integers $n_l$, $l=1,2$.
See Figure~\ref{fig:f1}.
\end{enumerate}
\end{theorem}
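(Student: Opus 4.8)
The plan is to pass to the Bargmann--Fock picture. Writing $z=t+i\omega$, the Gaussian window gives $|V_\varphi f(t,\omega)| = e^{-\frac{\pi}{2}|z|^2}\,|F_f(z)|$, where $F_f$ is (a constant multiple of) the Bargmann transform of $f$, an entire function of order $2$ and type $\pi/2$ lying in the Fock space; see \cite{Groechenig2001}. Since $f\mapsto F_f$ is a bijection onto the Fock space and $|V_\varphi f_1|=|V_\varphi f_2|$ on $\Lambda$ is equivalent to $|F_{f_1}|=|F_{f_2}|$ on $\Lambda$ (viewed in $\bbC$), Gabor phase retrieval for $\Lambda$ is the same as: a Fock-space function is determined up to a unimodular constant by the modulus of its values on $\Lambda$. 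Under $z=t+i\omega$ the direction $(\sin\theta_0,\cos\theta_0)$ becomes $i e^{-i\theta_0}$, so the three lines become three parallel lines $\ell_1,\ell_2,\ell_3$ in $\bbC$ whose Euclidean distances to $\ell_3$ are exactly $d_1,d_2$, and the samples sit at arc-length positions $a\bbZ^\nu$ on each line.

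For the positive direction I would argue in two steps. First, a single-line densification: on each $\ell_l$, parametrized by arc length $s$, the restriction $H(s):=F(z_l+s\,ie^{-i\theta_0})$ is entire of order $2$ and type $\pi/2$, and for real $s$ one has $|H(s)|^2=G(s)$ with $G:=H H^{*}$ entire of order $2$ and type at most $\pi$ (here $H^{*}(s)=\overline{H(\bar s)}$). If $|F_{f_1}|=|F_{f_2}|$ on the samples of $\ell_l$, then the corresponding $G_1-G_2$ is an order-$2$, type-$\le\pi$ entire function vanishing on $a\bbZ^{\nu}=\{\pm a n^{\nu}\}$; when $\nu<1/2$ the sample-counting function $\sim 2(r/a)^{1/\nu}$ dominates the $\le\pi r^2$ zero-counting bound, forcing $G_1\equiv G_2$, and at $\nu=1/2$ the same conclusion holds precisely up to the sharp threshold $a<1$ (the square-root-sequence estimate underlying Theorem~\ref{thm:t1}). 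This recovers $|F|$ on the \emph{entire} line $\ell_l$ for $l=1,2,3$. Second, with $d_1/d_2\notin\mathbb{Q}$ I invoke Wellershoff's theorem \cite{Wellershoff2024a}: a finite-order entire function is determined up to a global phase by its modulus on three parallel lines with irrational distance ratio. Applying it to $F$ finishes the positive part.

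For the negative direction (i), the rational ratio $d_1/d_2\in\mathbb{Q}$ is exactly the case excluded by Wellershoff's theorem, and is sharp: commensurability of the three lines allows a reflection/periodization construction of two Fock-space functions with equal modulus on all three \emph{full} lines but not agreeing up to a constant phase. Transporting back through the Bargmann transform yields a genuine counterexample to Gabor phase retrieval even with the full lines, hence a fortiori for the subsampled $\Lambda$.

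For (ii) and (iii) the failure is instead caused by insufficient sampling along the lines. When $\nu>1/2$ the sample-counting $\sim 2(r/a)^{1/\nu}=o(r^2)$ falls below the admissible zero density of order-$2$, type-$\le\pi$ functions, so on a single line one can produce $G_1\neq G_2$ of the required form agreeing on the samples; the corresponding $F_1,F_2$ then have equal modulus on $\Lambda$ but differ by more than a phase. The critical case $\nu=1/2$, $a>1$ is the delicate one: here one must exhibit an explicit extremal function realizing the ambiguity, and condition (iii)---collinearity of $(t_l,\omega_l)$ along $(\cos\theta_0,-\sin\theta_0)$ together with $d_l=a n_l^{1/2}$---is precisely what makes the three sample sets arithmetically aligned (their squared arc-length coordinates landing in $a^2\bbZ$), so that a single Gaussian/theta-type function of order $2$ simultaneously produces the modulus ambiguity on all three lines. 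I expect the main obstacle to be pinning down the \emph{sharp} constant $a=1$ at $\nu=1/2$: the crude Jensen zero-count only yields uniqueness for $a$ bounded by a smaller constant, so matching the positive densification lemma with the explicit critical counterexample of (iii)---and verifying throughout that the constructed functions genuinely lie in the Fock space---is where the real work lies.
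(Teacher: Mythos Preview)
Your positive direction is essentially the paper's argument: extend $|F|^2$ restricted to each line to an entire function of order $2$ (the paper does this via the product $\hat h_f(z)\overline{\hat h_f(\bar z)}$ rather than $HH^*$, but it is the same thing), use a Carlson-type uniqueness result at order $2$ to pass from the samples $a\bbZ^\nu$ to the full line, and then apply Wellershoff's three-line theorem. Your worry about the sharp constant $a=1$ is exactly right; the paper resolves it not by Jensen's inequality but by an indicator-diagram argument (a generalization of Carlson's theorem to order $m$, here $m=2$), which gives $H(\pm\pi/4)+H(\mp\pi/4)<2\pi/a^2$ as the precise threshold and hence $a<1$.

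Your negative direction, however, has a genuine gap. For (ii) you argue that the samples are too sparse on each line, so ``on a single line one can produce $G_1\neq G_2$ of the required form agreeing on the samples; the corresponding $F_1,F_2$ then have equal modulus on $\Lambda$.'' This does not follow: exhibiting an ambiguity in the squared-modulus data on one line does not produce Fock-space functions $F_1,F_2$ with $|F_1|=|F_2|$ on \emph{all} of $\Lambda$ simultaneously. You need a single global construction. The paper's device, which you are missing, is uniform across (i)--(iii): find a non-constant entire function $Q$ of order $<2$ (or order $2$ and type $<\pi/2$, or a single Gaussian $e^{cz_1(z+z_0)^2}$ with $0<c<\pi$) that takes \emph{real} values on all of $\Gamma^*_\Lambda$; then for any $p_1$ with $B\hat p_1\in F^2$ set $\hat h_{p_2}=Q\,\hat h_{p_1}$, and the pair $f_1=c_1p_1+c_2p_2$, $f_2=\bar c_1p_1+\bar c_2p_2$ satisfies $|\hat h_{f_1}|=|c_1+c_2Q|\,|\hat h_{p_1}|=|\bar c_1+\bar c_2Q|\,|\hat h_{p_1}|=|\hat h_{f_2}|$ on $\Gamma^*_\Lambda$ since $Q$ is real there, while $f_1\nsim f_2$. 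For (i) one takes $Q(z)=\exp\{n\pi(z-z_3)e^{-i\theta_0}/d_1\}$ (order $1$, real on each line by the rationality of $d_1/d_2$); for (ii) one takes the Weierstrass canonical product over $\Gamma^*_\Lambda$, whose order equals $L_\Lambda=1/\nu<2$ and which vanishes (hence is real) on $\Gamma^*_\Lambda$; for (iii) one takes $Q(z)=e^{ic\,e^{-2i\theta_0}(z-z_3)^2}$ with $c=\pi/a^2<\pi$, and the arithmetic alignment $d_l\in a\bbZ^{1/2}$ together with collinearity is exactly what makes $Q$ real on the samples. Your ``reflection/periodization'' and ``Gaussian/theta-type'' remarks for (i) and (iii) gesture toward this but do not identify the real-valued-$Q$ mechanism, and your (ii) argument needs to be replaced entirely by the canonical-product construction.
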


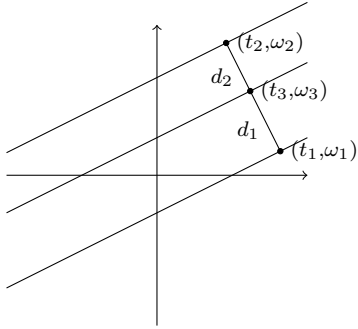
\begin{figure}[!ht]
\begin{center}
\begin{tikzpicture}[scale=1]
\draw [->] (-2,0) -- (2,0);
\draw [->] (0,-2) -- (0,2);
\draw (-2,-1.5) -- +(4,2);
\draw (-2,-0.5) -- +(4,2);
\draw (-2,0.3) -- +(4,2);

\coordinate [label=right:{$\scriptstyle (t_1,\omega_1)$}]
  (A1) at (intersection cs: first line={(1.8,0)--(0,3.6)}, second line={(-2,-1.5) -- (2,0.5)});
\filldraw (A1) circle (1pt);

\coordinate [label=right:{$\scriptstyle (t_3,\omega_3)$}]
  (A3) at (intersection cs: first line={(1.8,0)--(0,3.6)}, second line={(-2,-0.5) -- +(2,1.5)});
\filldraw (A3) circle (1pt);

\coordinate [label=right:{$\scriptstyle (t_2,\omega_2)$}]
  (A2) at (intersection cs: first line={(1.8,0)--(0,3.6)}, second line={(-2,0.3) -- +(2,2.3)});
\filldraw (A2) circle (1pt);

\draw (A1)--(A2);

\coordinate [label=left:{$\scriptstyle d_1$}] (D1)at (1.5,0.6);
\coordinate [label=left:{$\scriptstyle d_2$}] (D2) at (1.15,1.3);

\end{tikzpicture}
\caption{Gabor phase retrieval over
three parallel lines}\label{fig:f1}
\end{center}
\end{figure}

On the other hand,
Jaming~\cite{Jaming2014} proved that
two lines with an angle which is not a rational multiple of $\pi$
does phase retrieval for entire functions of finite orders.
For   Gabor phase retrieval on intersecting lines,
we also get the optimal sampling density.

\begin{theorem}\label{thm:t2a}
Suppose that
$\Lambda = \cup_{l=1}^2 \big((t_0,\omega_0)+a\bbZ^{\nu}
  (\sin\theta_l,\cos\theta_l)\big)$, where $a$ and $\nu$ are positive numbers,
$(t_0, \omega_0) \in \bbR^2$.
If  $(\theta_1 -\theta_2)/\pi
    \not\in \mathbb Q $
and either $\nu<1/2$ or $\nu=1/2$ with $a<1$,
then  $\Lambda$ does Gabor phase retrieval.

Moreover, $\Lambda$ does not do Gabor phase retrieval
if
one of the following conditions is satisfied,
\begin{enumerate}

\item $(\theta_1 -\theta_2)/\pi \in \mathbb Q$;

\item $\nu>1/2$;

\item $\nu=1/2$, $a>1 $ and  $|\cos (\theta_1-\theta_2)|>1/a^2$;

\item $\nu=1/2$, $a>1 $ and  $|\sin (\theta_1-\theta_2)|>1/a^2$;

\item $\nu=1/2$, $a>1 $ and  $|\sin(2\theta_1-2\theta_2)|>1/a^2$.
\end{enumerate}
\end{theorem}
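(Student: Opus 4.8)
\emph{Proof proposal.} The plan is to pass through the Bargmann transform, which turns Gabor phase retrieval into phase retrieval for entire functions, and then to separate the two features of the hypothesis: the irrational angle $(\theta_1-\theta_2)/\pi\notin\mathbb Q$ (which will feed Jaming's two-line uniqueness theorem) and the density condition $\nu<1/2$ or $\nu=1/2,\ a<1$ (which lets us upgrade samples on each line to the whole line). Recall that for the Gaussian window there is an entire function $F_f$, the Bargmann transform of $f$, with $|V_\varphi f(t,\omega)|=e^{-\pi(t^2+\omega^2)/2}\,|F_f(t+i\omega)|$, lying in the Fock space and hence of order $2$ and type $\pi/2$. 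Thus $|V_\varphi f_1|=|V_\varphi f_2|$ on $\Lambda$ is equivalent to $|F_1|=|F_2|$ on the image $\widetilde\Lambda\subset\bbC$ of $\Lambda$ under $(t,\omega)\mapsto t+i\omega$, where $\widetilde\Lambda$ consists of the square root sequences $z_0+s w_l$, $s\in a\bbZ^{\nu}$, $l=1,2$, on two lines through $z_0=t_0+i\omega_0$ whose complex directions $w_1,w_2$ enclose the angle $\theta_1-\theta_2$. After a time-frequency shift, acting on the Fock side by a modulation-translation preserving order and type, we may take $z_0=0$.

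For the positive part I would first show, on each line separately, that equality of $|F_1|,|F_2|$ on the sampled points forces equality on the whole line. Fix $l$ and set $\Phi_{j,l}(\zeta)=F_j(\zeta w_l)\,F_j^{\#}(\zeta\bar w_l)$, where $F_j^{\#}(\zeta)=\overline{F_j(\bar\zeta)}$; for real $s$ this equals $|F_j(sw_l)|^2$, and $\Phi_{j,l}$ is entire of order $2$ and type at most $\pi$. The difference $G_l:=\Phi_{1,l}-\Phi_{2,l}$ is then entire of order $\le 2$ and vanishes on $a\bbZ^{\nu}$. When $\nu<1/2$ the zero-counting function of $a\bbZ^{\nu}$ grows like $R^{1/\nu}$ with $1/\nu>2$, already exceeding the admissible growth and forcing $G_l\equiv0$. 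When $\nu=1/2$ the counting function grows like $R^2/a^2$, exactly at the order-$2$ scale; here I would pass to the variable $u=\zeta^2$, under which the sampled set $\zeta\in a\bbZ^{1/2}$ becomes the equally spaced set $u\in a^2\bbZ_{\ge0}$ and $G_l$ becomes a function of exponential type, so that $a<1$ places the samples above the critical (Nyquist) spacing and again forces $G_l\equiv0$. Consequently $|F_1|=|F_2|$ on each full line $\ell_1,\ell_2$. Since these lines meet at $z_0$ and the angle $\theta_1-\theta_2$ between them is not a rational multiple of $\pi$, Jaming's theorem for finite-order entire functions yields $F_1=e^{i\alpha}F_2$, hence $f_1=e^{i\alpha}f_2$, completing the positive direction.

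For the negative direction I would produce, in each case, two Fock functions $F_1\neq e^{i\alpha}F_2$ with $|F_1|=|F_2|$ on $\widetilde\Lambda$. In case (i) the angle is $\theta_1-\theta_2=(p/q)\pi$, so the reflections across $\ell_1,\ell_2$ generate a finite dihedral group; reflecting a suitably chosen Fock function across one line while preserving its modulus on both lines (the sharpness construction behind Jaming's irrationality hypothesis) yields a non-proportional partner with the same modulus on $\ell_1\cup\ell_2$, hence on $\widetilde\Lambda$. In case (ii), $\nu>1/2$, the sampled set has counting function $o(R^2)$ on each line, leaving room for a nonzero order-$2$ factor vanishing on $a\bbZ^{\nu}$; transplanting this into the Bargmann picture gives the required pair already from a single-line deficiency. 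The critical cases (iii)--(v), with $\nu=1/2$ and $a>1$, are genuine undersampling at the Nyquist scale: in the $u=\zeta^2$ picture there is a nonzero band-limited function vanishing on $a^2\bbZ_{\ge0}$, and transferring it back builds a counterexample on one line. The quantities $|\cos(\theta_1-\theta_2)|$, $|\sin(\theta_1-\theta_2)|$ and $|\sin(2\theta_1-2\theta_2)|$ measure how $w_2$ projects against $w_1$, equivalently the anisotropy of the order-$2$ indicator along the two directions, and each inequality $>1/a^2$ records a distinct way of arranging the flip, shift, or rotation so that the single-line counterexample stays consistent on the second line as well.

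The main obstacle is the sharp analysis at the critical density $\nu=1/2$. On the positive side I must verify that the equispaced problem obtained from $u=\zeta^2$ genuinely sits above the Nyquist threshold, which forces attention to the one-sided nature of the induced samples $a^2\bbZ_{\ge0}$ and to the precise type of $G_l$ rather than a crude Jensen bound. On the negative side I must construct the examples of (iii)--(v) so that they lie in the Fock space (equivalently correspond to genuine $L^2$ signals), are not phase-equivalent, and---the delicate point---remain valid \emph{simultaneously} on both lines; the geometric conditions on $|\cos(\theta_1-\theta_2)|$, $|\sin(\theta_1-\theta_2)|$ and $|\sin(2\theta_1-2\theta_2)|$ serve exactly to guarantee this two-line compatibility, and deciding which one suffices in a given configuration is where the real work lies. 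I expect these constructions to reuse and adapt the extremal examples behind Theorem~\ref{thm:t1}, whose $a=b>1$ failure is the single-line prototype of the present critical phenomenon.
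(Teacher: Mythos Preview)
Your positive direction is essentially the paper's proof. The authors also pass to the Bargmann side (via the closely related $\hat h_f$), holomorphically extend $x\mapsto|\hat h_f(z_0+xe^{i\theta_l})|^2$ to an entire function of order $2$ whose indicator at $\pm\pi/4,\pm5\pi/4$ equals $\pi$, and then argue exactly as you do: zero overcrowding for $\nu<1/2$, and for $\nu=1/2$, $a<1$ a generalized Carlson theorem (their Lemma~\ref{Lm:L7}/Corollary~\ref{Co:c1}) obtained precisely by your substitution $u=\zeta^2$ after an even/odd split that resolves the one-sidedness issue you flag. Jaming's two-line theorem then finishes.

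Your negative direction, however, misses the paper's unifying mechanism and has a genuine gap in case~(i). The paper never reflects Fock functions. Instead it proves one meta-lemma (Lemmas~\ref{Lm:L5} and \ref{Lm10}): if there exists a non-constant entire $Q$ of order $<2$, or of order $2$ and type $<\pi/2$, or of the special form $e^{ce^{i\alpha}(z-z_0)^2}$ with $0<c<\pi$, that is \emph{real-valued} on $\Gamma^*_\Lambda$, then $\Lambda$ fails; one picks $p_1,p_2\in L^2$ with $\hat h_{p_2}=Q\hat h_{p_1}$ and sets $f_1=c_1p_1+c_2p_2$, $f_2=\bar c_1p_1+\bar c_2p_2$. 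All five cases reduce to producing such a $Q$. For (i) the paper does exploit the dihedral structure you sense, but by symmetrizing an exponential, $Q(z)=\sum_{n=0}^{2q-1}\exp\{(z-z_0)e^{-i(\theta_2+pn\pi/q)}\}$, which has order $1$ and is real on both lines; your proposed reflection $F\mapsto\overline{F\circ\sigma_{\ell_1}}$ preserves modulus on $\ell_1$ but not on $\ell_2$, and pushing the dihedral orbit further while staying in Fock and avoiding phase equivalence is not straightforward. For (iii)--(v) the explicit choice is the chirp $Q(z)=e^{ce^{i\alpha}(z-z_0)^2}$: at $z=z_0\pm ak^{1/2}e^{i\theta_l}$ one gets $\exp\{ca^2k\,e^{i(\alpha+2\theta_l)}\}$, and the three choices $\alpha=\tfrac{\pi}{2}-\theta_1-\theta_2$, $-\theta_1-\theta_2$, $-2\theta_2$ together with $ca^2$ times $|\cos(\theta_1-\theta_2)|$, $|\sin(\theta_1-\theta_2)|$, $|\sin 2(\theta_1-\theta_2)|$ equal to $\pi$ make the imaginary exponent an integer multiple of $\pi$ for every $k$; the inequalities $>1/a^2$ are exactly what force $c<\pi$ so that the chirp still yields an $L^2$ pair via Lemma~\ref{Lm10}(iii). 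Your ``undersampled bandlimited function on one line, then made compatible with the second'' does not reach this: the whole point is a single multiplier $Q$ simultaneously real on both sampled rays, and the Gaussian-chirp ansatz with a tuned phase $\alpha$ is what delivers it.
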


To prove the above results, the main ingredients include the Bargmann transform and the distribution
of zeros of entire functions.
We generalize a classical result of Carlson,
which was originally applied to
entire functions of exponential type, to entire functions of high orders.
 In Section 2, we
give some necessary preliminaries to be used in the proofs of the main results.
In Section 3, we give proofs for Theorems~\ref{thm:t1},
\ref{thm:t2b} and
\ref{thm:t2a}.
In Section 4, we show that the sequence $a\bbZ^{1/2}$ in the main results can be replaced by irregular sequences
satisfying certain density conditions.
And in Section 5, we show that our method also works for
multivariate signals and
more general window functions introduced in
\cite{GrohsLiehr2025}.

\section{Preliminaries}

In this section, we collect some preliminaries
to be used in the proofs.

For two functions $f$ and $g$, $f\sim g$ means that
there is some $\alpha\in [0,2\pi]$ such that
$f(x) = e^{i\alpha} g(x)$ for almost all $x\in\bbR$.

$ \mathcal{F}f  $ and $\hat f$ denote the Fourier transform of a function $f$, i.e.,
\[
  \mathcal F f(\omega) = \hat f(\omega)
  := \int_{\bbR} f(x) e^{-2\pi i x\omega}\rmd x.
\]

Given a sequence $\Lambda\subset \bbR^2$,
define the associated sequence  in $\mathbb C$ by
\begin{equation}\label{Eq1}
\Gamma^*_\Lambda=\{x+iy:\,(y,x)\in \Lambda\} .
\end{equation}

\subsection{The Bargmann transform}
Let $ \rmd\mu(z)=e^{-\pi|z|^2}\rmd x\, \rmd y $
be the Gaussian measure. Recall that the Bargmann-Fock space $ F^2(\mathbb{C}) $
consists of all entire functions $f$ for which
\[
    \|f\|_{F^2(\mathbb{C})}
    :=\Big(\int_\mathbb{C}|f(z)|^2 \rmd\mu(z)\Big)^{1/2}<\infty.
\]

For any $ f\in L^2(\mathbb{R}) $,
the Bargmann transform of $f$ is defined by  \begin{equation}\label{Eq13}
Bf(z)=2^{1/4}
 \int_\mathbb{R}f(s)
 e^{2\pi sz-\pi s^2-\pi z^2/2}\rmd s,\quad z\in\mathbb C.
\end{equation}
The Bargmann transform is a unitary
operator from  $ L^2(\mathbb{R}) $ onto $ F^2(\mathbb{C}) $.
We see from (\ref{Eq13})
 that the Bargmann and  Gabor transforms
are related by
\[
V_\varphi f(t,-\omega)=2^{-1/4}e^{\pi it\omega- \pi |z|^2/2}Bf(z),
\]
where $ z=t+i\omega\in\mathbb{C}$, $t,\omega\in\mathbb{R} $.
For an introduction on Bargmann transforms,
we refer to \cite[Chapter 3]{Groechenig2001}.

\subsection{Density of a sequence of points}

Let $ \Lambda $ be a set in $ \mathbb{R}^2 $ with no accumulation points. Define the counting function $ n_\Lambda(r)$ for $ \Lambda $ by
\[
  n_\Lambda(r): =\#(\Lambda\cap\{\lambda\in\mathbb{R}^2:
 \,|\lambda|\le r\}),
\]
where $\#$ stands for the cardinality of a sequence or a set.
The density of $ \Lambda $ is defined by
\begin{equation}\label{eq:density}
L_\Lambda:=\limsup\limits_{r\rightarrow\infty}\frac{\log n_\Lambda(r)}{\log r}.
\end{equation}
For a set  $ \Gamma $ in the complex plane $ \mathbb{C} $ with no accumulation points, $ n_{\Gamma}(r) $ and $ L_{\Gamma} $
are  defined similarly.

\subsection{The canonical product}

Here we introduce some results for entire functions.
Define the elementary factors by $E_0(z) = 1-z$
and
\[
E_k(z)=(1-z)\exp\{ z+\frac{z^2}{2}+\cdot\cdot\cdot+\frac{z^k}{k} \},
\quad k\ge 1.
\]

Let $ \Gamma=\{z_n:\,n\geq 1, z_n\neq0\} $
be a sequence in $\mathbb C$.
Define the convergence exponent  of $\Gamma$ by
\[
    \rho_\Gamma=\inf\Big\{\alpha>0:\,\sum_{n=1}^\infty \frac{1}{|z_n|^{\alpha}}<\infty\Big\}.
\]
Let $ \alpha $ be the minimal positive integer
such that
the above series converge.
We call
$ p:=\alpha-1 $
the genus of $\Gamma$.

Whenever $\Gamma$ is the zero set of an entire function $f$, $z_n$ must be listed according to their multiplicities. In this case, we also call
$p$ the genus of zero set of $f$.

Let $\Gamma$ be the zero set of $f$. If $\Gamma\ne\emptyset$, then
\begin{equation}\label{eq:rho}
\rho_\Gamma = L_\Gamma .
\end{equation}
We refer to \cite[Theorem 2.5.8]{Boas1954}
for a proof.

The order of a nonzero entire function $f$ is defined by
\[
    \varrho=\limsup_{r\rightarrow\infty} \frac{\log\log M(r)} {\log r},
\]
where $M(r)$ denotes the maximum of
$ |f(z)| $ on $\{z:\, |z|\le r\}$.

The type $\tau$ of an entire function $f$
with positive order $\varrho<\infty$ is defined by \[
    \tau:=\limsup_{r\rightarrow\infty} \frac{ \log M(r)} {r^{\varrho}}.
\]

Let $\Gamma:= \{ z_n:\,n\geq1 \}  $ be a complex sequence,
$ z_n\neq0 $ and  $ \lim\limits_{n\rightarrow\infty} |z_n|={\infty}  $.
Suppose that the genus of $\Gamma$ equals $p$.
Then
\[
    P(z):=\prod_{n=1}^\infty E_p (\frac{z} {z_n})
\]
defines an entire function $P$, for which $p$ is
the genus of its zero set and   $\Gamma$
is the zero set~\cite[Theorem 15.9]{Rudin1987}.
We call $P$   a
canonical product of genus $p$.

\begin{proposition} [{\cite[Theorem 2.6.5]{Boas1954}}]\label{Prop5}
A canonical product of genus $p$
is an entire function of
order equal to the convergence exponent of its zeros.
\end{proposition}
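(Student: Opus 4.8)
The plan is to write $\rho:=\rho_\Gamma$ for the convergence exponent and $\varrho$ for the order of $P$, and to prove the two inequalities $\varrho\le\rho$ and $\varrho\ge\rho$ separately. I would begin by recording the regime fixed by the genus: since $p$ is the genus, $\sum_n|z_n|^{-(p+1)}$ converges while $\sum_n|z_n|^{-p}$ diverges (for $p\ge1$), so that $p\le\rho\le p+1$; this is precisely why $E_p$ is the correct elementary factor here. Note also that $P(0)=\prod_n E_p(0)=1\ne0$, which will let me apply Jensen's formula freely.

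The inequality $\varrho\ge\rho$ is the easy direction. If $\varrho=\infty$ there is nothing to prove, so assume $\varrho<\infty$. Jensen's formula applied to $P$ gives $\int_0^r t^{-1}n_\Gamma(t)\,\rmd t\le \log M(2r)\le C r^{\varrho+\varepsilon}$ for every $\varepsilon>0$, whence $n_\Gamma(r)\log 2\le\int_r^{2r}t^{-1}n_\Gamma(t)\,\rmd t= O(r^{\varrho+\varepsilon})$ and therefore $L_\Gamma\le\varrho$. Since the zero set of $P$ is exactly $\Gamma$, identity (\ref{eq:rho}) gives $\rho=\rho_\Gamma=L_\Gamma\le\varrho$.

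The substance of the proof is the reverse inequality $\varrho\le\rho$, obtained by estimating $\log|P(z)|=\sum_n\log|E_p(z/z_n)|$ from above. The analytic input is a pair of bounds for the elementary factor. From $\log E_p(w)=-\sum_{k>p}w^k/k$ on $|w|<1$ one gets $\log|E_p(w)|\le C_1|w|^{p+1}$ for $|w|\le1/2$, while bounding $\log|1-w|\le\log(1+|w|)$ together with the exponential polynomial gives $\log|E_p(w)|\le C_2|w|^{p}$ for $|w|>1/2$ (when $p=0$ one replaces this by $\log|E_0(w)|\le C_\delta|w|^\delta$ with any $\rho<\delta<1$). Fixing $r=|z|$ and splitting the sum into the far terms $|z_n|>2r$ and the near terms $|z_n|\le2r$, I would apply the first bound to the far part and the second to the near part. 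Using $n_\Gamma(t)=O(t^{\rho+\varepsilon})$ from (\ref{eq:rho}) and summation by parts against $\rmd n_\Gamma(t)$, the far part becomes $r^{p+1}\int_{2r}^\infty t^{-(p+1)}\,\rmd n_\Gamma(t)$ and the near part becomes $r^{p}\int_{0}^{2r}t^{-p}\,\rmd n_\Gamma(t)$; the exponent conditions $\rho+\varepsilon<p+1$ and $\rho+\varepsilon>p$ make the two Stieltjes integrals converge to the expected powers, so each part is $O(r^{\rho+\varepsilon})$. This yields $\log M(r)\le C_\varepsilon r^{\rho+\varepsilon}$, hence $\varrho\le\rho+\varepsilon$, and letting $\varepsilon\to0$ gives $\varrho\le\rho$.

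I expect the main obstacle to be extracting the sharp exponent $\rho$ rather than the crude $p+1$ from the far sum, and in particular the boundary case $\rho=p+1$, in which the integration by parts fails because $\int^\infty t^{\rho+\varepsilon-p-2}\,\rmd t$ diverges. Here I would bypass the counting estimate and bound the far part directly by $r^{p+1}\sum_n|z_n|^{-(p+1)}=O(r^{p+1})=O(r^{\rho})$, using only the convergence of $\sum_n|z_n|^{-(p+1)}$ guaranteed by the genus. Combining the two inequalities yields $\varrho=\rho=\rho_\Gamma$, which is the assertion of Proposition~\ref{Prop5}.
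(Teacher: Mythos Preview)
The paper does not supply its own proof of this proposition: it is quoted verbatim as \cite[Theorem 2.6.5]{Boas1954} and used as a black box. So there is no in-paper argument to compare against; your write-up is essentially the classical proof one finds in Boas or Levin, and it is correct in substance. The Jensen-formula direction $\rho\le\varrho$ is clean (and your appeal to \eqref{eq:rho} is legitimate, since that identity is also imported from Boas). The direction $\varrho\le\rho$ via the two elementary-factor bounds and the near/far split is the standard route, and your handling of the boundary case $\rho=p+1$ for the far sum is exactly right.

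One small slip: in the $p=0$ parenthetical you require $\rho<\delta<1$, which leaves the boundary case $p=0$, $\rho=1$ uncovered (there is no such $\delta$). The constraint $\delta>\rho$ is not actually needed: the bound $\log(1+|w|)\le C_\delta|w|^{\delta}$ holds on $|w|\ge 1/2$ for \emph{any} $\delta>0$, and the near-part integration by parts then gives $O(r^{\rho+\varepsilon})$ whenever $0<\delta<\rho+\varepsilon$. Equivalently, just use $\log|1-w|\le|w|$ and argue as in the $p\ge1$ case with $p$ formally replaced by $1$. With that adjustment the argument goes through for all $\rho\in[p,p+1]$.
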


\subsection{Indicators and supporting functions}

Let $f$ be an entire function of order $\varrho$.
We call
\[
    H_f(\theta):=\limsup\limits_{r\rightarrow\infty}
    \frac{\log|f(re^{i\theta})|}{r^\varrho}, \quad \theta\in [-\pi,\pi]
\]
the indicator function of $ f(z) $ with respect to the order $\varrho$
\cite[Page 53]{Levin1996}.

The supporting function $ k(\theta) $ of a set $K\subset\bbC$
is defined by
\[
    k(\theta)=\sup\limits_{z\in K}\{x\cos\theta+y\sin\theta\}=\sup\limits_{z\in K}\{\Re(ze^{-i\theta})\},\quad \theta\in[0,2\pi].
\]
For each $\theta\in[0,2\pi]$,  $l_\theta:=\{z:\,\Re(ze^{-i\theta})=k(\theta)\}$ is called a supporting line of $K$ \cite[Page 63]{Levin1996}.

Let $ f $ be an entire function of exponential type.
Then its indicator $ H_f$ is the supporting function of a convex
compact set, which is called the indicator diagram of the function $f$
\cite[Page 65]{Levin1996}.

\begin{proposition}
[{\cite[Theorem 12.3, Page 83]{Levin1996}}]
\label{prop:p3}
Let $ f(z) $ be an entire function of exponential type. If $ f $ vanishes at an increasing sequence of positive numbers
$ \{\lambda_n:\, n\ge 1\} $
having the density
\[
    \triangle:=\lim\limits_{n\rightarrow\infty}\frac{n}{\lambda_n},
\]
then the supporting line of the indicator diagram of $ f(z) $, which is orthogonal to the direction $ \arg z=0 $, and the indicator diagram itself have a common segment of length at least $ 2\pi\triangle $.
\end{proposition}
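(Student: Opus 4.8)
The plan is to translate the hypothesis on the positive real zeros into a statement about the local geometry of the indicator diagram near the direction $\arg z=0$. Write $I_f$ for the indicator diagram of $f$, a convex compact set whose supporting function is $H_f$; this is exactly the content recalled just before Proposition~\ref{prop:p3}. The supporting line orthogonal to the direction $\arg z=0$ that meets $I_f$ on the side of the positive real axis is $l_0=\{z:\,\Re z = k(0)\}$, and its common segment with $I_f$ is the face of $I_f$ with outer normal $e^{i0}$. By the standard description of the faces of a convex body through its supporting function, the length of this segment equals the jump
\[
  H_f'(0^+)-H_f'(0^-)
\]
of the one-sided derivatives of $H_f$ at $\theta=0$. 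Thus the whole statement reduces to the single inequality $H_f'(0^+)-H_f'(0^-)\ge 2\pi\triangle$.

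To obtain this inequality I would compare the number of zeros in a thin sector around the positive real axis with the growth of $f$ on the two rays bounding that sector. Fix $\epsilon>0$ and let $n(r;-\epsilon,\epsilon)$ be the number of zeros of $f$ in $\{z:\,|z|\le r,\ |\arg z|\le\epsilon\}$, with upper angular density $\bar\Delta_\epsilon:=\limsup_{r\to\infty} n(r;-\epsilon,\epsilon)/r$. The key analytic input is an inequality of Levin--Pfluger type for functions of exponential type,
\[
  \bar\Delta_\epsilon \le \frac{1}{2\pi}\Big(H_f'(\epsilon^+)-H_f'((-\epsilon)^-)+\int_{-\epsilon}^{\epsilon} H_f(\theta)\,\rmd\theta\Big),
\]
which expresses that a prescribed growth, measured by $H_f$, can accommodate only a limited density of zeros in the sector. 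Since every $\lambda_n$ lies on the positive real axis, hence inside each such sector, we have $n(r;-\epsilon,\epsilon)\ge \#\{n:\,\lambda_n\le r\}$, and the hypothesis $\triangle=\lim_n n/\lambda_n$ gives $\bar\Delta_\epsilon\ge\triangle$ for every $\epsilon>0$.

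Letting $\epsilon\downarrow 0$ then finishes the argument. As $H_f$ is the supporting function of a convex body, its derivative $H_f'$ has one-sided limits everywhere and is continuous except at the corners of $I_f$; consequently $H_f'(\epsilon^+)\to H_f'(0^+)$ and $H_f'((-\epsilon)^-)\to H_f'(0^-)$, while $\int_{-\epsilon}^{\epsilon}H_f\,\rmd\theta\to 0$ because $H_f$ is bounded. Combining the last two displays yields $\triangle\le \frac{1}{2\pi}\big(H_f'(0^+)-H_f'(0^-)\big)$, which is precisely the reduced inequality and hence the claimed lower bound $2\pi\triangle$ for the length of the common segment. I expect the main obstacle to be the Levin--Pfluger inequality itself: proving it for an \emph{arbitrary} function of exponential type, rather than one of completely regular growth, requires applying a Carleman-type formula on the sector (after a conformal map to a half-plane) and controlling the zero count purely by the boundary growth on the rays $\arg z=\pm\epsilon$, where $\log|f|$ is governed by $H_f$. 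The delicate points are that $H_f$ is defined through a $\limsup$ rather than a genuine limit, so one must pass from pointwise radial growth to uniform boundary estimates, and that exceptional directions of slower growth must be shown not to spoil the count. Once this inequality is in hand, the reduction above is routine, and the passage to the face length via convex geometry is purely formal.
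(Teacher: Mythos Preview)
The paper does not prove this proposition; it is quoted from \cite[Theorem~12.3, p.~83]{Levin1996} and used only as a black box in Lemma~\ref{Lm:L7}. Your reduction to the single inequality $H_f'(0^+)-H_f'(0^-)\ge 2\pi\triangle$ is correct, as is the limiting step, since the one-sided derivatives of a supporting function are one-sidedly continuous.

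The gap is the sector inequality itself. For functions of completely regular growth it is precisely the Levin--Pfluger density formula, but for an arbitrary $f$ of exponential type it is not a lemma one can invoke---its content is essentially the proposition. Your proposed route, a Carleman formula on the sector $|\arg z|<\epsilon$ after the conformal map $z\mapsto z^{\pi/(2\epsilon)}$, does not work as written: the image function has order $2\epsilon/\pi<1$, the images $\lambda_n^{\pi/(2\epsilon)}$ of the zeros have density zero, and the Carleman sum $\sum(|w_k|^{-1}-|w_k|R^{-2})$ converges, so the $\log R$ balance that powers half-plane Carleman arguments disappears. A direct Jensen--Carleman identity on the sector carries the same weight $r^{-\pi/(2\epsilon)}$ and has the same defect. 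Levin's argument in Lecture~12 (immediately following his treatment of the P\'olya theorem) instead uses the representation $f(z)=\frac{1}{2\pi i}\oint_C\phi(s)e^{sz}\,\rmd s$ with $C$ encircling the conjugate indicator diagram: one splits $C$ into a piece hugging the rightmost face (a vertical segment of length $L$) and a remainder on which $\Re s<H_f(0)$; the remainder contributes a term that is exponentially smaller along the positive axis, so $f(\lambda_n)=0$ forces a function whose indicator diagram is that single segment of length $L$ to be correspondingly small at every $\lambda_n$. This is then shown to be impossible when the $\lambda_n$ have density exceeding $L/(2\pi)$, by the classical theory of functions with segment indicator diagrams (equivalently, completeness of $\{e^{i\lambda_n t}\}$ on an interval of length $L$). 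That route bypasses the sector inequality entirely.
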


\section{Regular sampling for Gabor phase retrieval}

In this section, we give proofs of
Theorems~\ref{thm:t1},
\ref{thm:t2b} and \ref{thm:t2a} respectively.
The following proposition states  conditions for a function to be entire, which can be proved
with Morera's theorem and Fubini's theorem.

\begin{proposition} [{\cite[Lemma 3.2]{GrohsLiehr2025}}]\label{Prop1}
Let $ F:\mathbb{R}^m\times\mathbb{C}^n\rightarrow\mathbb{C}  $ be a function
meets the followings,

\begin{enumerate}
\item  for each $ z\in\mathbb{C}^n $, $ F(\cdot,z) $
is a  Lebesgue measurable function,

\item for each $ s\in\mathbb{R}^m $, $ F(s,\cdot) $ is
an entire function,

\item the function $ z\mapsto\int_{\mathbb{R}^m} |F(s,z)|\rmd s  $
 is locally bounded, that is, for any $ z_0\in \mathbb{C}^n $, there is
  some constant $ \delta>0 $ such that
\[
\sup\limits_{\substack{z\in\mathbb{C}^n\\|z-z_0|\leq\delta}} \int_{\mathbb{R}^m} |F(s,z)|\rmd s <\infty,
\]
\end{enumerate}
then $ z\mapsto\int_{\mathbb{R}^m} F(s,z)\rmd s  $
defines an entire function with $n$ variables.
\end{proposition}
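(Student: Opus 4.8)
The plan is to show that $G(z):=\int_{\bbR^m}F(s,z)\,\rmd s$ is well defined and holomorphic on a neighborhood of an arbitrary point $z_0\in\bbC^n$; since $z_0$ is arbitrary, this yields that $G$ is entire. First, hypothesis (iii) furnishes $\delta>0$ and $M<\infty$ with $\int_{\bbR^m}|F(s,z)|\,\rmd s\le M$ for all $z$ in the closed polydisc $\{|z-z_0|\le\delta\}$. Together with the measurability in (i), this shows $F(\cdot,z)\in L^1(\bbR^m)$ for each such $z$, so $G(z)$ is well defined and satisfies $|G(z)|\le M$ there; in particular $G$ is locally bounded. By Osgood's lemma it then suffices to prove that $G$ is continuous on this polydisc and holomorphic in each coordinate separately.

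For the separate holomorphy I would use Morera's theorem. Fix all coordinates except $z_j=w$, and let $\gamma$ be the boundary of any closed triangle contained in the corresponding $w$-disc. The key computation is the interchange
\[
  \oint_\gamma G(z)\,\rmd w
  =\oint_\gamma\Big(\int_{\bbR^m}F(s,z)\,\rmd s\Big)\rmd w
  =\int_{\bbR^m}\Big(\oint_\gamma F(s,z)\,\rmd w\Big)\rmd s,
\]
which is justified by Fubini's theorem: the map $(s,w)\mapsto F(s,z)$ is jointly measurable, being measurable in $s$ by (i) and continuous in $w$ by (ii) (a Carath\'eodory function), and its modulus integrates to at most $M\cdot\mathrm{length}(\gamma)<\infty$ over $\bbR^m\times\gamma$ by local boundedness and compactness of $\gamma$. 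Since $F(s,\cdot)$ is entire by (ii), Cauchy's theorem gives $\oint_\gamma F(s,z)\,\rmd w=0$ for each $s$, so $\oint_\gamma G(z)\,\rmd w=0$ for every triangle, and Morera's theorem delivers holomorphy of $G$ in $z_j$.

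The genuinely delicate step, which I expect to be the main obstacle, is the continuity of $G$ needed to legitimately invoke Morera (and Osgood). Hypothesis (iii) bounds only the aggregate integral $\int_{\bbR^m}|F(s,z)|\,\rmd s$ and not the integrand uniformly in $s$, so a direct dominated-convergence argument lacks an obvious majorant. I would produce one from a sub-mean-value estimate: for fixed $s$ the holomorphy of $F(s,\cdot)$ bounds $|F(s,z)|$, for $z$ in a slightly smaller polydisc, by a constant multiple of the average of $|F(s,\cdot)|$ over the $\delta$-polydisc about $z_0$; integrating this average in $s$ stays $\le CM$ by (iii), yielding an $s$-integrable majorant valid uniformly for $z$ in the smaller polydisc. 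Combined with the pointwise convergence $F(s,z)\to F(s,z')$ from the continuity in (ii), dominated convergence then gives continuity of $G$.

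Finally, I note that the continuity obstacle can be sidestepped altogether: representing $F(s,\cdot)$ by its Cauchy integral over a circle $C$ and interchanging with $\int_{\bbR^m}\rmd s$ by the same Fubini argument produces the representation
\[
  G(z)=\frac{1}{2\pi i}\oint_{C}\frac{G(w)}{w-z}\,\rmd w,
\]
a Cauchy-type integral of the bounded measurable function $G|_C$, which is automatically holomorphic in $z$ off $C$ by differentiation under the integral sign. This route proves separate holomorphy without first establishing continuity, and continuity then follows a posteriori.
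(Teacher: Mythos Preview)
Your proposal is correct and matches the paper's approach: the paper does not give a proof but simply states that the proposition ``can be proved with Morera's theorem and Fubini's theorem,'' which is precisely your strategy. Your treatment is in fact more careful than what the paper sketches, since you explicitly address the continuity needed for Morera (via the sub-mean-value majorant) and offer the alternative Cauchy-integral route; both are sound.
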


For any  $f\in L^2(\bbR)$, define
\begin{equation}\label{eq:hf}
h_f(x)=f(x)e^{-\pi x^2} .
\end{equation}
Then $ h_f  \in L^1\cap L^2 $. By Proposition~\ref{Prop1},
$ \hat{h}_f $ extends to an entire function on $\bbC$
with
\begin{equation}\label{eq:hat hf}
  \hat h_f (z) = \int_{\bbR}  e^{-\pi s^2}f(s) e^{-2\pi i s (x+iy)}
  \rmd s,
  \qquad z=x+iy.
\end{equation}
It is easy to check that
\begin{align}
    V_\varphi f(t,\omega)&=e^{-\pi t^2} \int_{\mathbb{R} } e^{-\pi s^2} f(s)e^{-2\pi i(\omega+it)s} \rmd s
    \nonumber \\
   & =e^{-\pi t^2} \hat{h}_f(z),\quad z=\omega+it.
    \label{eq:vphi:h}
\end{align}

To study phase retrieval with samples
taken from three parallel lines,
we need the following characterization
of entire functions.

\begin{proposition} [{\cite[Theorem 1]{Wellershoff2024a}}]\label{Prop3}
Suppose that $ \ell_1$, $\ell_2$, $\ell_3 \subset\mathbb{C}  $
are three parallel lines. Let $d_{k}$ be the distance between
$\ell_k$ and $\ell_3$, $k=1,2$.
If $f$ and $g$ are entire functions,
$d_1>0$, $d_2>0$,
$d_1/d_2\not\in \mathbb Q$,
and $|f|=|g|$ on $\cup_{k=1}^3 \ell_k$,
then   $ f\sim g $.
\end{proposition}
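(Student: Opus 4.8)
The plan is to reduce the configuration to three horizontal lines and then to show that the meromorphic quotient $f/g$ is doubly periodic with two purely imaginary periods whose ratio equals $d_1/d_2$; the irrationality hypothesis then forces this quotient to be a unimodular constant.

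First I would normalize. A rotation $z\mapsto e^{i\theta}z$ together with a translation maps entire functions to entire functions, preserves moduli, and preserves the distances between the (parallel) lines; replacing $f,g$ by the corresponding functions I may therefore assume $\ell_3=\mathbb{R}$ and that $\ell_1,\ell_2$ are the lines $\{\Im z=a\}$ and $\{\Im z=b\}$ with $|a|=d_1$ and $|b|=d_2$. The degenerate case $g\equiv 0$ is immediate: then $|f|=0$ on $\ell_3$, so $f$ vanishes on a line and hence $f\equiv 0\sim g$. Assume henceforth $g\not\equiv 0$.

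Next I would introduce the reflected entire functions $f^{*}(z):=\overline{f(\bar z)}$ and $g^{*}(z):=\overline{g(\bar z)}$. A direct check shows $\overline{f(z)}=f^{*}(z-2ic)$ whenever $\Im z=c$. Thus the three identities $|f|^2=|g|^2$ on $\ell_3,\ell_1,\ell_2$ say that the entire functions $f(z)f^{*}(z-2ic)$ and $g(z)g^{*}(z-2ic)$ agree on a line for $c=0,a,b$; since two entire functions that coincide on a line coincide everywhere, I obtain the global identities
\begin{align*}
f(z)f^{*}(z) &= g(z)g^{*}(z),\\
f(z)f^{*}(z-2ia) &= g(z)g^{*}(z-2ia),\\
f(z)f^{*}(z-2ib) &= g(z)g^{*}(z-2ib).
\end{align*}
Writing $\Phi:=f/g$, a meromorphic function, and observing $(f/g)^{*}=f^{*}/g^{*}$, the first identity reads $\Phi\,\Phi^{*}=1$, while the other two give $\Phi(z)=1/\Phi^{*}(z-2ia)$ and $\Phi(z)=1/\Phi^{*}(z-2ib)$. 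Substituting $\Phi^{*}=1/\Phi$ into the last two relations yields $\Phi(z)=\Phi(z-2ia)$ and $\Phi(z)=\Phi(z-2ib)$, so $\Phi$ possesses the two purely imaginary periods $2id_1$ and $2id_2$.

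The final step is to exploit the irrationality of $d_1/d_2$. The set of periods of a non-constant meromorphic function is a discrete subgroup of $(\mathbb{C},+)$; since $2id_1$ and $2id_2$ are linearly dependent over $\mathbb{R}$, a discrete group containing both would be infinite cyclic, forcing $d_1/d_2\in\mathbb{Q}$. As this is excluded, the period group of $\Phi$ is not discrete, whence $\Phi$ is constant, say $\Phi\equiv c$; then $f=cg$, and equality of moduli on $\ell_3$ gives $|c|=1$, i.e.\ $f\sim g$. I expect the main obstacle to be making this last implication fully rigorous for a genuinely meromorphic $\Phi$: one must pass to a subinterval of a vertical line that avoids the isolated poles of $\Phi$, let the (dense) period-translates of a regular point accumulate in that interval, and then apply the identity theorem to conclude that $\Phi$ is constant.
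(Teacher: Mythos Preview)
The paper does not prove this proposition; it is quoted verbatim from \cite{Wellershoff2024a} as a preliminary result, so there is no in-paper argument to compare against. Your proof is correct and is essentially the standard route to this statement: reduce to horizontal lines, use the Schwarz reflection $f^{*}(z)=\overline{f(\bar z)}$ to turn each modulus identity on $\Im z=c$ into a global entire identity $f(z)f^{*}(z-2ic)=g(z)g^{*}(z-2ic)$, and deduce that the meromorphic quotient $\Phi=f/g$ satisfies $\Phi\Phi^{*}=1$ and has the two purely imaginary periods $2ia$ and $2ib$; the irrationality of $d_1/d_2$ then forces the period group to be non-discrete, hence $\Phi$ constant of modulus one.

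Two small points worth tightening. First, the periods you obtain are literally $2ia$ and $2ib$ with $a=\pm d_1$, $b=\pm d_2$, not $2id_1$ and $2id_2$; since a period and its negative are equivalent this is harmless, but the wording should reflect it. Second, the ``main obstacle'' you flag is not really an obstacle: the period group of a non-constant meromorphic function on $\mathbb{C}$ is discrete, because if periods $p_n\to 0$ then at any regular point $z_0$ the values $\Phi(z_0+p_n)=\Phi(z_0)$ accumulate, and the identity theorem (applied to $\Phi-\Phi(z_0)$, which is holomorphic near $z_0$) gives $\Phi\equiv\Phi(z_0)$. Once you say this explicitly, the argument is complete.
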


And the following is another characterization
of entire functions on two intersecting
lines.

\begin{proposition}[{\cite[Theorem 3.3]{Jaming2014}}]
\label{prop:p2}
Let $\theta_1, \theta_2\in \bbR$ be such that
$\theta_1-\theta_2 \not\in \mathbb Q\pi$.
Suppose that $f,g$ are entire functions of finite order
and $z_0\in\mathbb C$.
If
\[
  |f(z_0+x e^{i\theta_l})| = |g(z_0+x e^{i\theta_l})|,\quad \forall x\in\bbR, l=1,2,
\]
then $f\sim g$.
\end{proposition}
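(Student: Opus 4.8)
The plan is to translate the modulus equalities on the two lines into a pair of identities between entire functions, pass to the meromorphic quotient $f/g$, and then exploit the irrationality of $(\theta_1-\theta_2)/\pi$ via a density argument. After a translation I may assume $z_0=0$. The degenerate case is disposed of first: if, say, $g\equiv 0$, then $|f|=0$ on a real line, and since the restriction of an entire function to a line extends to an entire function of one complex variable, vanishing there forces $f\equiv 0$ by the identity theorem; thus both are zero and $f\sim g$. So I assume $f,g\not\equiv 0$.

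First I would introduce the conjugate-reflection operator $(R_\alpha h)(z):=\overline{h(\bar z\,e^{i\alpha})}$, which maps entire functions to entire functions (it conjugates the Taylor coefficients of $h$ and multiplies the $n$th one by $e^{-i\alpha n}$). Fixing a direction $\theta$ and setting $\phi(w):=f(we^{i\theta})$, $\gamma(w):=g(we^{i\theta})$, the hypothesis reads $|\phi(x)|=|\gamma(x)|$ for $x\in\bbR$. Writing $\phi^*(w):=\overline{\phi(\bar w)}$, which is entire with $\phi^*(x)=\overline{\phi(x)}$ on $\bbR$, this says $\phi(x)\phi^*(x)=\gamma(x)\gamma^*(x)$ on $\bbR$; both sides are entire in $w$, so they agree on all of $\bbC$. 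Undoing the substitution $w=ze^{-i\theta}$ turns this into the key identity
\[
  f(z)\,(R_{2\theta}f)(z)=g(z)\,(R_{2\theta}g)(z),\qquad z\in\bbC,
\]
which holds for both $\theta=\theta_1$ and $\theta=\theta_2$.

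Next I would form the meromorphic quotient $\Phi:=f/g$. Since $R_\alpha(f/g)=R_\alpha f/R_\alpha g$, dividing the identity above by $g\,(R_{2\theta}g)$ gives $\Phi\cdot(R_{2\theta_l}\Phi)\equiv 1$ as an identity of meromorphic functions, for $l=1,2$; hence $R_{2\theta_1}\Phi=R_{2\theta_2}\Phi=1/\Phi$. Unwinding the conjugation in the equality $R_{2\theta_1}\Phi=R_{2\theta_2}\Phi$ yields $\Phi(\zeta e^{2i(\theta_1-\theta_2)})=\Phi(\zeta)$ for all $\zeta\in\bbC$, so $\Phi$ is invariant under rotation by $\beta:=2(\theta_1-\theta_2)$. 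Now $(\theta_1-\theta_2)/\pi\notin\mathbb{Q}$ makes $\beta$ an irrational multiple of $2\pi$, so each orbit $\{\zeta e^{ik\beta}:k\in\bbZ\}$ is dense in the circle $|z|=|\zeta|$. At any point where $\Phi$ is holomorphic and finite, $\Phi$ is then constant on a dense subset of that circle; continuity rules out poles lying on the circle and forces $\Phi$ to be constant on it, and the identity theorem propagates this to all of $\bbC$. Thus $\Phi\equiv c$, and since $|\Phi|=|f/g|=1$ on the two lines, $|c|=1$; that is, $f=cg$ with $|c|=1$, i.e.\ $f\sim g$.

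The conceptual core is the closing step: a meromorphic function on $\bbC$ invariant under rotation by an irrational multiple of $2\pi$ must be constant, and I expect the density argument establishing this to be where the irrationality hypothesis is genuinely used. The chief technical obstacle is the rigorous passage to the quotient $\Phi$ — keeping track of common zeros, checking that the cancellation of conjugations producing the rotation invariance is legitimate as an identity of meromorphic functions, and confirming that no poles of $\Phi$ obstruct the constancy argument on the relevant circles. I note that the finite-order hypothesis does not appear necessary for this reflection argument; in the cited source it instead underpins an alternative proof through Hadamard factorization and the distribution of zeros along the lines, which I would keep in reserve should the meromorphic bookkeeping turn out to be delicate.
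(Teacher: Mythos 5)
Your argument is correct, and it should be noted at the outset that the paper itself offers no proof of this proposition: it is cited from Jaming \cite[Theorem 3.3]{Jaming2014}, so the only meaningful comparison is with the cited source. Your chain of deductions checks out. The identity $\phi(x)\phi^*(x)=\gamma(x)\gamma^*(x)$ on $\bbR$, with $\phi^*(w)=\overline{\phi(\bar w)}$, extends to all of $\bbC$ by the identity theorem, giving $f\cdot R_{2\theta_l}f=g\cdot R_{2\theta_l}g$ for $l=1,2$; since $R_\alpha$ is multiplicative and $g\not\equiv 0$, the quotient $\Phi=f/g$ satisfies $R_{2\theta_1}\Phi=R_{2\theta_2}\Phi=1/\Phi$, hence $\Phi(\zeta e^{i\beta})=\Phi(\zeta)$ with $\beta=2(\theta_1-\theta_2)$, and $\beta/(2\pi)=(\theta_1-\theta_2)/\pi\not\in\mathbb Q$; finally, a meromorphic function invariant under an irrational rotation is constant. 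On that last step your bookkeeping can be tightened in two ways: a pole at $\zeta_0\ne 0$ would propagate along its orbit to a bounded infinite set of poles, contradicting discreteness of the pole set, so $\Phi$ is holomorphic on $\bbC\setminus\{0\}$; alternatively, expanding $\Phi$ in a Laurent series there, invariance forces $a_n(e^{in\beta}-1)=0$, so $a_n=0$ for all $n\ne 0$, which bypasses the density argument entirely. The unimodularity of the constant and the degenerate case $g\equiv 0$ are handled correctly, and the rational-angle counterexample $f(z)=e^{iz^2}$, $g\equiv 1$ with $\theta_1-\theta_2=\pi/2$ confirms that your use of irrationality is exactly where it must be. The genuine divergence from the source is the one you flag yourself: Jaming's theorem assumes finite order, which his argument exploits through Hadamard-type factorization and control of zero distributions, whereas your reflection-and-rotation argument never invokes growth, so you have in fact proved a marginally stronger statement. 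For the present paper this extra generality is idle — the proposition is applied to $\hat h_f$ and $\hat h_g$, which are entire of order at most $2$ — but it does make your proof self-contained where the paper relies on citation.
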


To prove the main result,
we need the following density result
of zeros of entire functions.

\begin{proposition}[{\cite[Theorem 2.5.13]{Boas1954}}] \label{prop:pa}
Let $f$ be a nonzero  entire function of
order $\varrho$ and type $\tau$.
Denote by $n(r)$ the number of zeros of $f$ in the area
$\{z:\, |z|\le r\}$. Then
\[
  \liminf_{r\to\infty} \frac{n(r)}{r^{\varrho}}
  \le \varrho\tau.
\]
\end{proposition}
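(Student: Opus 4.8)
The plan is to use Jensen's formula to compare the integrated zero-counting function with the growth of $\log M(r)$, and then to run a contradiction argument that exploits the precise arithmetic of integrating a power.

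First I would reduce to the case $f(0)\ne 0$. If $f$ has a zero of order $m$ at the origin, write $f(z)=z^m g(z)$ with $g(0)\ne 0$; this replaces $\log M(r)$ by $m\log r+\log M_g(r)$ and $n(r)$ by $m+n_g(r)$. Since the type is defined only for $0<\varrho<\infty$, the additive constant $m$ and the term $m\log r$ are both $o(r^{\varrho})$ and affect no limit normalized by $r^{\varrho}$. So assume $f(0)\ne 0$ and set $N(r):=\int_0^r n(t)/t\,\rmd t$. Jensen's formula gives
\[
  N(r)=\frac{1}{2\pi}\int_0^{2\pi}\log|f(re^{i\theta})|\,\rmd\theta-\log|f(0)|
  \le \log M(r)-\log|f(0)|.
\]

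Next I would translate the growth hypothesis. By the definition of type, for every $\varepsilon>0$ there is $r_0$ with $\log M(r)\le(\tau+\varepsilon)r^{\varrho}$ for $r\ge r_0$, hence
\[
  \limsup_{r\to\infty}\frac{N(r)}{r^{\varrho}}\le \tau .
\]
Finally I would argue by contradiction. Suppose $\liminf_{r\to\infty}n(r)/r^{\varrho}>\varrho\tau$. Then there exist $\delta>0$ and $r_1$ such that $n(r)\ge(\varrho\tau+\delta)r^{\varrho}$ for all $r\ge r_1$. Using $\varrho>0$,
\[
  N(r)\ge\int_{r_1}^r\frac{(\varrho\tau+\delta)t^{\varrho}}{t}\,\rmd t
  =\frac{\varrho\tau+\delta}{\varrho}\bigl(r^{\varrho}-r_1^{\varrho}\bigr),
\]
so $\limsup_{r\to\infty}N(r)/r^{\varrho}\ge \tau+\delta/\varrho>\tau$, contradicting the previous display. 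Therefore $\liminf_{r\to\infty}n(r)/r^{\varrho}\le\varrho\tau$.

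The main obstacle is recognizing that the sharp constant $\varrho\tau$ emerges precisely from the factor $1/\varrho$ produced when integrating $t^{\varrho-1}$, and that only the $\liminf$ (not the $\limsup$) can be controlled this way. Indeed, the pointwise estimate $n(r)\le N(er)\le e^{\varrho}(\tau+\varepsilon)r^{\varrho}$, obtained directly from the monotonicity of $n$, loses a factor $e^{\varrho}$; hence the averaged contradiction route through $N(r)$ is essential to recover the correct coefficient.
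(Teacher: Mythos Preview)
Your proof is correct. Note, however, that the paper does not give its own proof of this proposition: it is simply quoted from Boas, \emph{Entire Functions}, Theorem 2.5.13. Your argument via Jensen's formula and the integrated counting function $N(r)$ is precisely the classical route (and is essentially the proof in Boas), so there is nothing to compare.
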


For entire functions of exponential type,
Carlson's theorem gives a density condition
for the zero set.

\begin{proposition}[{\cite[Theorem 9.2.1]{Boas1954}}]\label{prop:p4}
Suppose that $f$ is analytic in the half plane
$\{x+iy:\, x\ge 0, y\in\bbR\}$ and is of exponential type.
If $f(n)=0$ for all nonnegative integers $n$ and
$H_f(\pi/2)+H_f(-\pi/2)<2\pi$,
then $f(z)=0$ for all $z\in\mathbb C$.
\end{proposition}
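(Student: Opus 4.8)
The plan is to argue by contradiction, converting the hypothesis on $H_f(\pi/2)+H_f(-\pi/2)$ into a lower bound on the height of the indicator diagram of $f$, which the integer zeros are then forced to violate. So suppose $f\not\equiv 0$. In the applications the relevant function (coming from the Bargmann/Gabor transform) is entire, so $f$ is a nonzero entire function of exponential type and hence has a well-defined indicator $H_f$ whose indicator diagram $K$ is a convex compact set with supporting function $H_f(\theta)=\sup_{w\in K}\Re(we^{-i\theta})$.

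First I would record the elementary geometric identity behind the hypothesis. Writing $w=u+iv\in K$, the support-function definition gives $H_f(\pi/2)=\sup_{w\in K}\Re(-iw)=\sup_{w\in K}\Im w$ and $H_f(-\pi/2)=\sup_{w\in K}\Re(iw)=-\inf_{w\in K}\Im w$, so that
\[
  H_f(\pi/2)+H_f(-\pi/2)=\sup_{w\in K}\Im w-\inf_{w\in K}\Im w
\]
is exactly the vertical extent (the height) of the indicator diagram $K$.

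Next I would feed the zeros into Proposition~\ref{prop:p3}. The positive integers $\{n:\,n\ge 1\}$ form an increasing sequence of positive numbers at which $f$ vanishes, with density $\triangle=\lim_{n\to\infty} n/n=1$. Proposition~\ref{prop:p3} then asserts that the supporting line of $K$ orthogonal to the direction $\arg z=0$ (a vertical supporting line) and $K$ itself share a common segment of length at least $2\pi\triangle=2\pi$. Since that common segment is vertical and lies in $K$, the height of $K$ is at least $2\pi$, and by the identity above $H_f(\pi/2)+H_f(-\pi/2)\ge 2\pi$, contradicting the hypothesis $H_f(\pi/2)+H_f(-\pi/2)<2\pi$. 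Therefore $f\equiv 0$.

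The main obstacle is the mismatch between the stated hypothesis (``analytic in the right half-plane, of exponential type'') and the global nature of Proposition~\ref{prop:p3} (entire functions), since the indicator diagram is a global object. For the entire functions that actually occur here the argument above is complete; to cover the general half-plane statement one must either invoke Levin's theory of the indicator for functions of exponential type in an angle, or fall back on Boas's original route. In the latter one first symmetrizes by replacing $f$ with $f(z)e^{icz}$, where $c=(H_f(\pi/2)-H_f(-\pi/2))/2$, so that the two boundary indicator values become equal and strictly less than $\pi$ while the integer zeros persist, and then studies $g(z)=f(z)/\sin(\pi z)$, which is regular in the half-plane because the integer zeros cancel, applying Phragm\'en--Lindel\"of in the two quadrants; there the delicate point is to control $g$ on the positive real axis and not merely on the imaginary axis.
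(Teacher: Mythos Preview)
The paper does not give its own proof of Proposition~\ref{prop:p4}; it is quoted from Boas without argument. However, your route via Proposition~\ref{prop:p3} is precisely the mechanism the paper deploys when it proves its generalization, Lemma~\ref{Lm:L7}: there, for each auxiliary function $g_k$ of exponential type vanishing on $\{\lambda_n^m\}$, the paper invokes Proposition~\ref{prop:p3} to produce a vertical segment $\{x_0+iy:\,y_1\le y\le y_2\}$ in the indicator diagram with $y_2-y_1\ge 2\pi/a^m$, deduces $H_{g_k}(\pi/2)+H_{g_k}(-\pi/2)\ge 2\pi/a^m$, and obtains a contradiction. Your computation that $H_f(\pi/2)+H_f(-\pi/2)$ equals the vertical extent of the indicator diagram is exactly the same step, specialized to $m=1$ and $\lambda_n=n$.

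You are also right to flag the half-plane versus entire discrepancy. Proposition~\ref{prop:p3} and the indicator-diagram machinery as stated in the paper are for entire functions, so your argument is complete only in that setting; the paper never needs the genuine half-plane version (all the functions it feeds into this circle of ideas are entire), and indeed in Lemma~\ref{Lm:L7} the hypothesis is that $f$ is entire. So for the paper's purposes your argument is both correct and aligned with what the authors actually do; the Phragm\'en--Lindel\"of route you sketch at the end is Boas's original proof and would be needed only if one insisted on the half-plane formulation verbatim.
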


The hypothesis that $f(n)=0$ for all nonnegative integers can be replaced by $f(\lambda_n)=0$ for a sequence of positive numbers $\{\lambda_n:\, n\ge 1\}$
for which the limit
\[
  a:=\lim_{n\to\infty} \frac{\lambda_n}{n}
\]
exists and is equal to or less than $1$.
For a proof, see
\cite[Theorem 8 in Chapter 4]{Levin1980}.
See
also \cite{Levinson1940,Young2001}
for more results on the distribution
of zeros of entire functions.
Here we present a generalized version for entire functions
of high orders.

\begin{lemma}\label{Lm:L7}
Let $m\ge 1$ be an integer and $f$ be an entire function of order $m$ with finite type. Let $\{\lambda_n:\, n\ge 1\}$ be an increasing sequence
of positive numbers such that the limit
\[
  a:=\lim_{n\to\infty} \frac{\lambda_n}{n^{1/m}}
\]
exists.
If $f(\lambda_n e^{2l\pi i/m})=0$ for all   $n\ge 1 $
and $0\le l\le m-1$,  and
\[
  \max_{0\le l\le m-1}
    \Big\{H_{f}\Big( \frac{(4l+1)\pi}{2m}\Big)\Big\}
    +\max_{0\le l\le m-1}
    \Big\{H_{f}\Big( \frac{(4l-1)\pi}{2m}\Big)\Big\}
  <  \frac{2\pi}{a^m},
\]
then $f(z)=0$ for all $z\in\mathbb C$.
\end{lemma}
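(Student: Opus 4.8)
The strategy is to reduce the high-order case to the classical Carlson-type theorem (Proposition~\ref{prop:p4} together with its Levin refinement) by composing $f$ with the map $w \mapsto w^{1/m}$. The key observation is that the zeros are prescribed at the points $\lambda_n e^{2l\pi i/m}$, i.e. at all $m$-th roots of the numbers $\lambda_n^m$. This symmetry is exactly what is needed so that an auxiliary function in a single "$m$-th power" variable inherits these zeros in a clean way. Concretely, I would first consider building a function $g$ that captures $f$ along the $m$ rays $\arg z = 2l\pi/m$ and transplants the problem onto a half-plane where the order has dropped to $1$.

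Let me sketch this more carefully. Set $g(w) := \prod_{l=0}^{m-1} f(w^{1/m} e^{2l\pi i /m})$, where the product over all $m$-th roots makes $g$ a well-defined single-valued entire function of $w$ (the branch ambiguity cancels because we multiply over all roots). Since $f$ has order $m$ with finite type, each factor $f(w^{1/m}e^{2l\pi i/m})$, as a function of $w$, has order $1$ with finite type along the relevant rays, so $g$ is of exponential type. The hypothesis $f(\lambda_n e^{2l\pi i/m}) = 0$ for all $n$ and all $0 \le l \le m-1$ translates into $g(\lambda_n^m) = 0$ for every $n$. Writing $\mu_n := \lambda_n^m$, the density condition $\lim_{n\to\infty} \lambda_n/n^{1/m} = a$ becomes $\lim_{n\to\infty} \mu_n/n = a^m$, so the sequence $\{\mu_n\}$ has linear density with ratio $a^m$. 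This is precisely the input for the refined Carlson theorem: a function of exponential type vanishing on a positive sequence of density at least $1/a^m$ (equivalently with $\lim \mu_n/n = a^m \le$ the reciprocal of the indicator bound) must vanish identically, provided the sum of the two relevant indicator values stays below $2\pi/a^m$.

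The main work, and the step I expect to be the genuine obstacle, is to compute the indicator of $g$ at the two directions $\arg w = \pm\pi/2$ in terms of the indicator of $f$, and to verify that the resulting bound matches the hypothesis $\max_l H_f((4l+1)\pi/(2m)) + \max_l H_f((4l-1)\pi/(2m)) < 2\pi/a^m$. The reason the two maxima over $l$ appear is that $H_g(\pm\pi/2)$ is governed by the dominant factor among the $m$ rotated copies of $f$: as $w = r e^{\pm i\pi/2}$, the point $w^{1/m} e^{2l\pi i/m}$ has argument $(4l\pi \pm \pi)/(2m)$, and the growth of $g$ along $\arg w = \pi/2$ is the sum of the growths of the factors, so $H_g(\pi/2) = \sum_{l} H_f((4l+1)\pi/(2m))$ at first glance. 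I must be careful here: the stated hypothesis uses the \emph{maximum} over $l$ rather than the sum, which suggests the intended reduction is not the full product over all $m$ roots but a more economical construction that isolates a single dominant ray in each half-plane, or that the indicator diagram is analyzed directly on each of the $m$ sectors. I would therefore work sector by sector, applying the exponential-type Carlson argument on each closed sector $\{2l\pi/m \le \arg z \le 2(l+1)\pi/m\}$ after the conformal change of variable $z \mapsto z^m$ that opens each sector to a half-plane, and control the two boundary-ray indicators by the per-ray values $H_f((4l\pm1)\pi/(2m))$; the $\max$ then arises naturally as the worst sector. Once the indicator bound is verified and the density condition is in force, Proposition~\ref{prop:p4} (in its Levin form) forces $f$ to vanish on each ray's sector, hence on all of $\mathbb{C}$.
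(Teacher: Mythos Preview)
Your product $g(w)=\prod_{l}f(w^{1/m}e^{2l\pi i/m})$ is indeed entire and vanishes at $\mu_n=\lambda_n^m$, but as you spotted its indicator picks up the \emph{sum} $\sum_l H_f\big((4l\pm1)\pi/(2m)\big)$, not the maximum; so the hypothesis does not feed directly into Carlson's bound.  (One can try to rescue this by noting that every $\mu_n$ is a zero of $g$ of multiplicity at least $m$, which boosts the zero density to $m/a^m$ and changes the Carlson threshold to $2\pi m/a^m$; the sum estimate then just suffices.  But this requires a version of Proposition~\ref{prop:p3} that counts multiplicities, which the paper does not state and you would have to justify separately.)  Your fallback sector argument has a harder gap: on the half-sector $|\arg z-2l\pi/m|\le \pi/(2m)$ the map $z\mapsto z^m$ does land in the right half-plane with the zero ray going to the positive axis, but the transplanted function $h(w)=f(w^{1/m})$ has an algebraic branch point at $w=0$ unless $f$ happens to contain only powers $z^{mk}$.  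So $h$ is not analytic on the closed half-plane and Proposition~\ref{prop:p4} does not apply as written.

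The construction you are groping for --- ``a more economical construction that isolates a single dominant ray'' --- is the discrete Fourier decomposition the paper actually uses.  Set $\omega=e^{2\pi i/m}$ and
\[
  f_k(z)=\sum_{l=0}^{m-1}\omega^{(m-k)l}f(\omega^l z),\qquad 0\le k\le m-1,
\]
which isolates the coefficients $a_n$ with $n\equiv k\pmod m$.  Two things happen at once.  First, $f_k$ is a \emph{sum} of rotated copies of $f$, so its indicator is bounded by the \emph{maximum} of the rotated indicators; this is exactly why the hypothesis features $\max_l$.  Second, because $z^{-k}f_k(z)$ is a power series in $z^m$, the relation $g_k(z^m)=z^{m-k}f_k(z)$ defines a genuinely \emph{entire} function $g_k$ of exponential type --- no branch point.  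From $f(\lambda_n\omega^l)=0$ one gets $g_k(\lambda_n^m)=0$, and then Proposition~\ref{prop:p3} forces each $g_k\equiv0$, i.e.\ every residue class of coefficients of $f$ vanishes.  The key idea you are missing is that the \emph{additive} symmetrisation, not the multiplicative one, simultaneously delivers the $\max$ indicator bound and the entireness of the descent.
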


\begin{proof}
Denote $\omega = e^{2\pi i/m}$.
   For any $0\le k\le m-1$, set
\[
   f_k(z) = \sum_{l=0}^{m-1} \omega^{(m-k)l} f(\omega^lz).
\]
Suppose that $f(z) = \sum_{n=0}^{\infty} a_n z^n$.
We have
\begin{align*}
f_k(z)
&= \sum_{n=0}^{\infty} \sum_{l=0}^{m-1} a_n\omega^{(m-k+n)l} z^n.
\end{align*}
Observe that
\[
  \sum_{l=0}^{m-1}  \omega^{(m-k+n)l} = \begin{cases}
     m,  & \mbox{ if } n-k = 0  \mod m  ,\\
     0,  & \mbox{ if } n-k \ne  0 \mod m  .
  \end{cases}
\]
We have
\[
  f_k(z) = m z^k \sum_{n=0}^{\infty} a_{mn+k} z^{mn}.
\]
Let
\[
  \tilde f_k(z) := z^{m-k}f_k(z) = m   \sum_{n=0}^{\infty} a_{mn+k} z^{m(n+1)}
\]
and
\[
    g_k(z) = m   \sum_{n=0}^{\infty} a_{mn+k} z^{ n+1 }.
\]
Then $g_k$ is an entire function of exponential type and
\[
  g_k(z^m) = \tilde f_k(z),\quad 0\le k\le m-1.
\]

Assume that $g_k$ is not identical to zero for some
$0\le k\le m-1$.
Fix some $\theta\in [-\pi, \pi]$.
For any $\varepsilon>0$, there is some $r_0>0$ such that
\[
    |f(re^{i\theta} \omega^l)| \le e^{r^m (H_{f}(\theta+{2l\pi}/{m})+\varepsilon)},
        \quad \forall r\ge r_0,\,
        0\le l\le m-1.
\]
Hence
\[
  |f_k(re^{i\theta} )|\le m \max_{0\le l\le m-1}\{e^{r^m (H_{f}(\theta+{2l\pi}/{m})+\varepsilon)}\}.
\]
Therefore,
\[
  H_{f_k}(\theta) \le \max_{0\le l\le m-1}
    \Big\{H_{f}\Big(\theta+\frac{2l\pi}{m}\Big)\Big\}.
\]
Consequently,
\begin{align}
H_{g_k}\Big(\frac{\pi}{2}\Big)
  + H_{g_k}\Big(-\frac{\pi}{2}\Big)
  &= H_{f_k}\Big(\frac{\pi}{2m}\Big)
  + H_{f_k}\Big(-\frac{\pi}{2m}\Big) \nonumber \\
 &\le  \max_{0\le l\le m-1}
    \Big\{H_{f}\Big( \frac{(4l+1)\pi}{2m}\Big)\Big\}
    +\max_{0\le l\le m-1}
    \Big\{H_{f}\Big( \frac{(4l-1)\pi}{2m}\Big)\Big\}  \nonumber  \\
 &<\frac{2\pi}{a^m}.   \label{eq:a1}
\end{align}
On the other hand, observe
that
\[
  g_k(\lambda_n^m)= \tilde f_k(\lambda_n) =
   \lambda_n^{m-k} f_k(\lambda_n)=
  0,\quad \forall n\ge 1.
\]
By Proposition~\ref{prop:p3},
the indicator diagram of $g_k$ contains
a segment of the form
$\{x_0+iy:\, y_1\le y\le y_2\}$ ,
where $y_2-y_1\ge 2\pi/a^m$.
Hence $H_{g_k}(\pi/2)\ge y_2$
and $H_{g_k}(-\pi/2)\ge -y_1$.
Consequently,
$H_{g_k}(\pi/2)+H_{g_k}(-\pi/2) \ge 2\pi/a^m$,
which contradicts (\ref{eq:a1}).
It follows that
$g_k(z)=0$ for all $z\in \mathbb C$ and $0\le k\le m-1$.
Consequently, $a_{mn+k}=0$ for all $n\ge 0$ and $0\le k\le m-1$.
Therefore, $f$ is identical to $0$.
\end{proof}

Setting $m=2$ and $\lambda_n = an^{1/2}$
in Lemma~\ref{Lm:L7}, we get
the following   consequence.

\begin{corollary}\label{Co:c1}
Let $\Lambda = a\mathbb Z^{1/2}$.
Suppose that $f$ is an entire function  of
order $2$ with finite type such that
$f$ vanishes on $\Lambda$.
If
\begin{align}
\max \Big\{ H_{f}\Big(\frac{\pi}{4}\Big) ,
     H_{f}\Big(\frac{5\pi}{4}\Big) \Big\}
     +
  \max \Big\{ H_{f}\Big(-\frac{\pi}{4}\Big) ,
     H_{f}\Big(-\frac{5\pi}{4}\Big)\Big\} < \frac{2\pi}{a^2} ,
\end{align}
then $f(z)=0$ for all $z\in\mathbb C$.
\end{corollary}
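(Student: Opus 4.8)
The plan is to derive Corollary~\ref{Co:c1} as a direct specialization of Lemma~\ref{Lm:L7} with $m=2$ and $\lambda_n = an^{1/2}$. First I would verify that the hypotheses of the lemma are met. The sequence $\lambda_n = an^{1/2}$ is increasing and positive, and the limit $\lim_{n\to\infty}\lambda_n/n^{1/2} = a$ exists, so the growth condition on $\lambda_n$ holds with constant $a$. Since $f$ is assumed to have order $2$ with finite type, the order hypothesis $m=2$ matches exactly.

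Next I would translate the vanishing condition. For $m=2$ we have $e^{2\pi i/m} = e^{\pi i} = -1$, so the condition ``$f(\lambda_n e^{2l\pi i/m})=0$ for all $n\ge 1$ and $0\le l\le m-1$'' becomes $f(\lambda_n)=0$ and $f(-\lambda_n)=0$ for all $n\ge 1$. This is precisely the statement that $f$ vanishes on $\Lambda = a\mathbb Z^{1/2} = \{\pm a n^{1/2}:\,n\ge 0\}$ (noting that the $n=0$ point is the origin, where $f$ may or may not vanish, but this single point does not affect the density hypothesis of the lemma). So the vanishing hypothesis is satisfied.

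The main point, and the only step requiring genuine verification, is matching the indicator-sum inequality. In Lemma~\ref{Lm:L7} the condition reads
\[
  \max_{0\le l\le m-1}\Big\{H_f\Big(\frac{(4l+1)\pi}{2m}\Big)\Big\}
  + \max_{0\le l\le m-1}\Big\{H_f\Big(\frac{(4l-1)\pi}{2m}\Big)\Big\}
  < \frac{2\pi}{a^m}.
\]
Setting $m=2$, the first maximum runs over $l=0,1$ with arguments $(4l+1)\pi/4$, giving angles $\pi/4$ (for $l=0$) and $5\pi/4$ (for $l=1$); the second maximum has arguments $(4l-1)\pi/4$, giving $-\pi/4$ (for $l=0$) and $3\pi/4$ (for $l=1$). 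Since the indicator function satisfies $H_f(\theta+2\pi)=H_f(\theta)$, the angle $3\pi/4$ is congruent to $3\pi/4 - 2\pi = -5\pi/4$, so the second maximum is over $\{H_f(-\pi/4), H_f(-5\pi/4)\}$. Thus the lemma's inequality becomes exactly
\[
  \max\Big\{H_f\Big(\frac{\pi}{4}\Big), H_f\Big(\frac{5\pi}{4}\Big)\Big\}
  + \max\Big\{H_f\Big(-\frac{\pi}{4}\Big), H_f\Big(-\frac{5\pi}{4}\Big)\Big\}
  < \frac{2\pi}{a^2},
\]
which is the stated hypothesis of the corollary. I expect the only subtlety to be this bookkeeping of the periodicity of $H_f$ to align $3\pi/4$ with $-5\pi/4$; everything else is a direct substitution. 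With all hypotheses of Lemma~\ref{Lm:L7} verified, its conclusion $f\equiv 0$ yields the corollary immediately.
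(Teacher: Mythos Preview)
Your proposal is correct and matches the paper's approach exactly: the paper states immediately before the corollary that it follows by ``setting $m=2$ and $\lambda_n = an^{1/2}$ in Lemma~\ref{Lm:L7}.'' Your verification of the hypotheses, including the $2\pi$-periodicity bookkeeping to identify $H_f(3\pi/4)$ with $H_f(-5\pi/4)$, is accurate and complete.
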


In the following two lemmas
we present a method to construct
functions
whose Gabor transforms have the
same absolute values
while they do not differ by
a global phase.

\begin{lemma}\label{Lm:L1}
Let $c_1$ and  $c_2$
be complex numbers such that
$|c_1|=|c_2|=1$ and
$c_1/c_2\not\in\bbR$.
Let $p_1$ and $p_2$ be two functions in $L^2(\bbR)$
which are linearly independent.
Set
\begin{equation} \label{Eq2}
        \left\{
    \begin{array} {ll}
    f_1(s)=c_1p_1(s)+c_2p_2(s),\\
    f_2(s)=\bar c_1 p_1(s)+\bar c_2 p_2(s).
    \end{array}  \right.
\end{equation}
Then $ f_1\nsim f_2 $.
\end{lemma}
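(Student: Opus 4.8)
The plan is to show that no unimodular constant $\alpha$ can make $f_1 = e^{i\alpha} f_2$ almost everywhere, by exploiting the linear independence of $p_1, p_2$ together with the hypothesis $c_1/c_2\notin\bbR$. First I would argue by contradiction: suppose $f_1\sim f_2$, so there exists $\alpha\in[0,2\pi]$ with $f_1(s)=e^{i\alpha}f_2(s)$ for almost every $s\in\bbR$. Substituting the definitions from \eqref{Eq2} gives
\[
  c_1 p_1(s)+c_2 p_2(s) = e^{i\alpha}\big(\bar c_1 p_1(s)+\bar c_2 p_2(s)\big)
\]
for almost all $s$, which rearranges to
\[
  (c_1 - e^{i\alpha}\bar c_1)\,p_1(s) + (c_2 - e^{i\alpha}\bar c_2)\,p_2(s) = 0
  \qquad \text{a.e.}
\]

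Now I would invoke the linear independence of $p_1$ and $p_2$ in $L^2(\bbR)$: since this relation is an a.e.\ identity exhibiting a vanishing linear combination, both scalar coefficients must vanish, i.e.
\[
  c_1 = e^{i\alpha}\bar c_1
  \qquad\text{and}\qquad
  c_2 = e^{i\alpha}\bar c_2 .
\]
Because $|c_1|=|c_2|=1$, I can write $c_j = e^{i\beta_j}$, and each equation $e^{i\beta_j} = e^{i\alpha}e^{-i\beta_j}$ forces $e^{2i\beta_j} = e^{i\alpha}$, hence $e^{i\alpha}=e^{2i\beta_1}=e^{2i\beta_2}$. This yields $e^{2i\beta_1}=e^{2i\beta_2}$, so $\beta_1-\beta_2\in\{0,\pi\}\pmod{2\pi}$, meaning $c_1/c_2 = e^{i(\beta_1-\beta_2)}\in\{\pm1\}\subset\bbR$. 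This directly contradicts the hypothesis $c_1/c_2\notin\bbR$, completing the argument.

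The only delicate point—what I would call the main obstacle, though it is mild—is justifying that a vanishing a.e.\ linear combination of $p_1, p_2$ genuinely forces both coefficients to be zero. This is exactly the meaning of linear independence in $L^2(\bbR)$ (the two functions are distinct elements spanning a two-dimensional subspace), so the step is immediate once the notion of linear independence is read correctly: if the coefficient of $p_1$ were nonzero, we could solve for $p_1$ as a scalar multiple of $p_2$ a.e., contradicting independence, and symmetrically for $p_2$. I would state this cleanly to make the logical flow transparent, since it is the single place where the two hypotheses ($p_1,p_2$ independent and $c_1/c_2\notin\bbR$) interact to rule out the existence of a global phase.
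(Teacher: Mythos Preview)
Your proof is correct and follows essentially the same approach as the paper: assume $f_1=e^{i\alpha}f_2$, use linear independence of $p_1,p_2$ to force $c_j=e^{i\alpha}\bar c_j$ for $j=1,2$, and conclude $c_1^2=c_2^2$, contradicting $c_1/c_2\notin\bbR$. The paper's argument is the same, only phrased with a unimodular $\lambda$ in place of $e^{i\alpha}$ and reaching $c_1=\pm c_2$ directly from $c_1^2=c_2^2=\lambda$ without passing through the polar form.
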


\begin{proof}
Assume on the contrary that  $ f_1\sim f_2 $.
Then there is some number $ \lambda  $ with $|\lambda|=1$ such that
  $ f_1=\lambda f_2 $. It follows from (\ref{Eq2}) that
\[
(c_1-\lambda\bar c_1)p_1(s)+(c_2-\lambda\bar c_2)p_2(s)=0.
\]
Since   $ p_1$ and $p_2 $ are linearly independent,
we have
$ c_1=\lambda\bar c_1$ and $c_2=\lambda\bar c_2  $.
Hence  $ c_1^2=c_2^2=\lambda$. Consequently,
$c_1=\pm c_2 $, which contradicts the hypothesis
$c_1/c_2\not\in\bbR$. So $ f_1\nsim f_2 $.
\end{proof}

\begin{lemma}\label{Lm:L5}
Let $\Lambda\subset \bbR^2$ be a sequence.
If there exist $p_1, p_2\in L^2$ and
an entire function $Q(z)$ which is not a constant
such that $Q(z)$ takes real values on $\Gamma_{\Lambda}^*$ and
\[
  \hat h_{p_2}(z) = Q(z)\hat h_{p_1}(z),\quad \forall z\in\mathbb C,
\]
then $\Lambda$ does not do Gabor phase retrieval.
\end{lemma}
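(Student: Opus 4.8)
The plan is to construct, from the given data, two functions $f_1, f_2\in L^2(\bbR)$ of the form appearing in Lemma~\ref{Lm:L1} whose Gabor transforms agree in absolute value on all of $\Lambda$, yet which satisfy $f_1\nsim f_2$. The natural candidates are built from $p_1$ and $p_2$: I would set $f_1 = c_1 p_1 + c_2 p_2$ and $f_2 = \bar c_1 p_1 + \bar c_2 p_2$ with suitably chosen unimodular constants $c_1, c_2$ whose ratio is nonreal, so that Lemma~\ref{Lm:L1} immediately yields $f_1\nsim f_2$. The whole point of the hypothesis is then to arrange that the Gabor intensities of $f_1$ and $f_2$ coincide on $\Lambda$.

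First I would translate the intensity condition into the Bargmann/Fourier picture using \eqref{eq:vphi:h}: since $V_\varphi f(t,\omega) = e^{-\pi t^2}\hat h_f(\omega+it)$, matching $|V_\varphi f_1|$ and $|V_\varphi f_2|$ on $\Lambda$ is equivalent to matching $|\hat h_{f_1}(z)|$ and $|\hat h_{f_2}(z)|$ at the associated points $z\in\Gamma^*_\Lambda$ defined in \eqref{Eq1}. Because $\hat h_f$ is linear in $f$, linearity gives $\hat h_{f_1} = c_1\hat h_{p_1} + c_2\hat h_{p_2}$ and $\hat h_{f_2} = \bar c_1\hat h_{p_1} + \bar c_2\hat h_{p_2}$. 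Now I would invoke the factorization hypothesis $\hat h_{p_2} = Q\,\hat h_{p_1}$ to rewrite these as
\[
  \hat h_{f_1}(z) = (c_1 + c_2 Q(z))\,\hat h_{p_1}(z),
  \qquad
  \hat h_{f_2}(z) = (\bar c_1 + \bar c_2 Q(z))\,\hat h_{p_1}(z).
\]
Taking absolute values, the common factor $|\hat h_{p_1}(z)|$ cancels, so it suffices to show $|c_1 + c_2 Q(z)| = |\bar c_1 + \bar c_2 Q(z)|$ for every $z\in\Gamma^*_\Lambda$.

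The key step is the observation that $Q(z)$ is \emph{real} on $\Gamma^*_\Lambda$ by hypothesis. For a real value $q = Q(z)$, one has $\overline{c_1 + c_2 q} = \bar c_1 + \bar c_2 q$, hence $|c_1 + c_2 q| = |\bar c_1 + \bar c_2 q|$ trivially. Therefore the two intensity expressions agree at every point of $\Gamma^*_\Lambda$, which translates back to $|V_\varphi f_1(t,\omega)| = |V_\varphi f_2(t,\omega)|$ for all $(t,\omega)\in\Lambda$. Since $Q$ is nonconstant, it cannot be the case that $Q$ is identically a real multiple forcing $f_1$ and $f_2$ to coincide; more importantly, I must only guarantee the existence of admissible $c_1, c_2$. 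Choosing for instance $c_1 = e^{i\pi/4}$ and $c_2 = e^{-i\pi/4}$ gives $|c_1|=|c_2|=1$ and $c_1/c_2 = i\notin\bbR$, so Lemma~\ref{Lm:L1} applies and $f_1\nsim f_2$.

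The main obstacle is a membership/well-posedness issue rather than the algebra: I must ensure that $f_1, f_2$ genuinely lie in $L^2(\bbR)$ (immediate, as linear combinations of $p_1, p_2\in L^2$) and, more subtly, that the relation $\hat h_{f_2} = Q\,\hat h_{f_1}$-type manipulations are legitimate as identities of entire functions, with the cancellation of $|\hat h_{p_1}(z)|$ valid even where $\hat h_{p_1}$ vanishes. At a zero of $\hat h_{p_1}$ both $\hat h_{f_1}$ and $\hat h_{f_2}$ vanish, so the intensity equality holds there too; away from such zeros the cancellation is honest. I would also note that the nonconstancy of $Q$ is what prevents a degenerate situation where $p_1, p_2$ are linearly dependent, keeping Lemma~\ref{Lm:L1}'s hypotheses intact. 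With these points checked, $\Lambda$ fails to distinguish $f_1$ from $f_2$ up to a global phase, so $\Lambda$ does not do Gabor phase retrieval.
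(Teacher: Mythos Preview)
Your proposal is correct and follows essentially the same approach as the paper: define $f_1,f_2$ via Lemma~\ref{Lm:L1}, use linearity and the factorization $\hat h_{p_2}=Q\hat h_{p_1}$ to write $\hat h_{f_j}$ as $(c_j\text{-combination of }1,Q)\cdot\hat h_{p_1}$, then exploit the reality of $Q$ on $\Gamma^*_\Lambda$ to match moduli and translate back via \eqref{eq:vphi:h}. You even supply details the paper leaves implicit, such as a concrete choice of $c_1,c_2$ and the observation that nonconstancy of $Q$ forces $p_1,p_2$ to be linearly independent (assuming $p_1\neq 0$), which is what makes Lemma~\ref{Lm:L1} applicable.
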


\begin{proof}
Let $f_1, f_2$ be defined as in (\ref{Eq2}).
We have
\begin{align*}
\hat{h}_{f_1}(z)
& =c_1\hat h_{p_1}(z)+c_2\hat h_{p_2}(z)
=(c_1 + c_2 Q(z))\hat h_{p_1}(z)
,\\
\hat{h}_{f_2}(z)
 &=\bar c_1\hat h_{p_1}(z)+\bar c_2\hat h_{p_2}(z)
 =(\bar c_1 + \bar c_2 Q(z))\hat h_{p_1}(z).
\end{align*}
Since $Q(z)$ takes  real values on $\Gamma^*_\Lambda$, we have $ |\hat{h}_{f_1}(z)|=|\hat{h}_{f_2}(z)|$,
$\forall z\in\Gamma^*_\Lambda $.
It follows from (\ref{eq:vphi:h})  that
$  |V_\varphi f_1(t,\omega)|=|V_\varphi f_2(t,\omega)|$
for all $(t,\omega)\in\Lambda$.
Moreover, we see from Lemma~\ref{Lm:L1}
that $ f_1\nsim f_2 $.
This completes the proof.
\end{proof}

With the help of Bargmann transform, we obtain
functions with certain properties,
which are used in the construction
of counterexamples.

\begin{lemma}\label{Lm10}
Let $ G(z)$ be a non-constant entire function
which meets one of the following conditions,
\begin{enumerate}

\item  the order $\varrho$ of $G$ is less than  $2$;

\item the order $ \varrho$ is equal to $2$ and the type $\tau$ is less than $\pi/2 $;

\item $G(z) = e^{c z_1  (z+z_0)^2}$, where  $0<c<\pi$, $z_0,z_1\in\mathbb C$
    and $|z_1|=1$.
\end{enumerate}
Then there exist two functions $ p_1$ and $p_2\in L^2(\mathbb{R}) $ which are linearly independent
such that
\begin{equation}\label{eq:G hp}
\hat h_{p_2}(z)=G(z) \hat h_{p_1}(z).
\end{equation}
\end{lemma}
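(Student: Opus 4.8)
The plan is to pass to the Bargmann transform, where the multiplier relation in (\ref{eq:G hp}) becomes a transparent division problem. From the definitions (\ref{Eq13}) and (\ref{eq:hat hf}), completing the square in the exponent yields the identity
\[
  \hat h_f(z) = 2^{-1/4} e^{-\pi z^2/2} Bf(-iz), \qquad f\in L^2(\bbR).
\]
Writing $w=-iz$ and setting $\tilde G(w):=G(iw)$, the identity (\ref{eq:G hp}) is seen to be equivalent to $Bp_2(w)=\tilde G(w)Bp_1(w)$. Since $B$ is a unitary bijection of $L^2(\bbR)$ onto $F^2(\bbC)$, the lemma reduces to the single claim that there exists a nonzero $F_1\in F^2(\bbC)$ with $\tilde G F_1\in F^2(\bbC)$. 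Granting this, I would set $F_2=\tilde G F_1$, $p_1=B^{-1}F_1$ and $p_2=B^{-1}F_2$; then $p_1,p_2\in L^2(\bbR)$, and (\ref{eq:G hp}) follows by reversing the change of variables.

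For the linear independence required by Lemmas~\ref{Lm:L1} and~\ref{Lm:L5}, I would argue in $F^2(\bbC)$ and use injectivity of $B$. If $\tilde G F_1=\lambda F_1$ for some scalar $\lambda$, then $(\tilde G-\lambda)F_1\equiv 0$; since $F_1\not\equiv0$ has isolated zeros, this forces $\tilde G\equiv\lambda$. But $\tilde G(w)=G(iw)$ is non-constant because $G$ is, a contradiction. Hence $F_1$ and $\tilde G F_1$ are linearly independent, and so are $p_1,p_2$.

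It remains to construct $F_1$. The elementary fact I would record is that $e^{aw^2+bw}\in F^2(\bbC)$ if and only if $|a|<\pi/2$: expanding $\int_{\bbC}|e^{aw^2+bw}|^2\rmd\mu(w)$ in polar coordinates, the radial integral converges exactly when $2\max_\theta\Re(ae^{2i\theta})=2|a|<\pi$, the linear term being of lower order and harmless. In cases (i) and (ii) I would take $F_1\equiv1\in F^2(\bbC)$ and verify $\tilde G=F_2\in F^2(\bbC)$ directly from the growth of $G$: since $M_{\tilde G}(r)=M_G(r)$, order $\varrho<2$ gives $|\tilde G(w)|\le e^{|w|^{\varrho'}}$ with some $\varrho'<2$ for large $|w|$, while order $\varrho=2$ with type $\tau<\pi/2$ gives $|\tilde G(w)|\le e^{(\tau+\varepsilon)|w|^2}$; in both cases $|\tilde G(w)|^2e^{-\pi|w|^2}$ is integrable (in case (ii) because $2(\tau+\varepsilon)<\pi$ for small $\varepsilon$).

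In case (iii) the type of $G$ may be as large as $\pi$, so $F_1\equiv1$ no longer works and $F_1$ must absorb the Gaussian growth. Here $\tilde G(w)=e^{cz_1(iw+z_0)^2}=e^{-cz_1w^2+2icz_0z_1w+cz_1z_0^2}$, whose quadratic coefficient has modulus $c<\pi$. I would choose $F_1(w)=e^{\frac{c}{2}z_1w^2}$, so that $\tilde GF_1$ has quadratic coefficient $-\frac{c}{2}z_1$; both coefficients have modulus $c/2<\pi/2$, whence $F_1\in F^2(\bbC)$ and $\tilde GF_1\in F^2(\bbC)$ by the criterion above. The main obstacle is precisely this last step: the existence of a usable quadratic coefficient rests on the two disks of radius $\pi/2$ centered at $0$ and $cz_1$ overlapping, which holds exactly because $c<\pi$. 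The remaining growth and integrability estimates are routine once the transform identity and the Gaussian membership criterion are in place.
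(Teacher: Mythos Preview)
Your proof is correct and follows essentially the same route as the paper: reduce (\ref{eq:G hp}) to a multiplier problem in Fock space via the Bargmann transform, then exhibit a Gaussian $F_1$ so that both $F_1$ and $\tilde GF_1$ lie in $F^2(\bbC)$. Your direct change of variable $w=-iz$ and the specific choices $F_1=1$ in cases (i)--(ii) and $F_1=e^{(c/2)z_1w^2}$ in case (iii) are minor streamlinings of the paper's version, which introduces an auxiliary operator $T$, works with $B\hat p_l$ rather than $Bp_l$, and leaves a free parameter $\beta$ in the Gaussian.
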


\begin{proof}
Define the operator  $ T:L^2(\mathbb{R})\mapsto
e^{-\pi z^2/2} F^2(\mathbb{C})  $
by
    \[
    Tf(z)=\mathcal{F} (\check f \cdot  \varphi)(z),
    \]
here $ \check f(s) $ stands for the inverse Fourier transform of $ f $. Observe that
for any $x\in\mathbb{R}$,
\begin{align*}
        Tf(x)&=(f * \hat\varphi)(x)
        =\int_\mathbb{R} f(s)e^{-\pi(x-s)^2}\rmd s.
\end{align*}
Since both sides extend to entire functions, we have
\begin{align}
        Tf(z)&=\int_\mathbb{R} f(s)e^{-\pi(z-s)^2}\rmd s\nonumber \\
        &=e^{-\pi z^2}\int_\mathbb{R}f(s)e^{2\pi sz-\pi s^2}\rmd s\nonumber\\
        &=2^{-1/4}e^{-\pi z^2/2}Bf(z),\quad z\in\mathbb{C}. \label{eq:Tf}
    \end{align}

There are two cases.

(C1)\, $G$ meets (i) or (ii).

In this case, for $\varepsilon>0$ small enough,
there exists some constant $C_{\varepsilon}$ such
that
\[
  |G(z)|\leq C_\varepsilon
      e^{(\pi/2-\varepsilon)|z|^2}.
\]

Fix some $\beta\in(0,\varepsilon) $.
Since $ e^{-\beta z^2}\in  F^2(\mathbb{C}) $,
 there is some $ p_1\in L^2(\mathbb{R}) $
such that $ B\hat{p}_1(z)=e^{-\beta z^2} $,
where $B$ stands for the Bargmann transform.
Note that
 \begin{align*}
     |G(z)B\hat{p}_1(z)|^2e^{-\pi|z|^2}&\lesssim e^{2(\pi/2-\varepsilon)|z|^2}e^{-2\beta(x^2-y^2)}e^{-\pi(x^2+y^2)}\\
     &=e^{-(2\varepsilon+2\beta)x^2}
       e^{-(2\varepsilon-2\beta)y^2}.
 \end{align*}
We have $ G(z)B\hat{p}_1(z)\in F^2(\mathbb{C}) $.
Hence there is some
$ p_2\in L^2(\mathbb{R}) $
such that $ B\hat{p}_2(z)=G(z)B\hat{p}_1(z)$.
Now we obtain
\[
    2^{-1/4}e^{-\pi z^2/2} B\hat{p}_2(z)
    =2^{-1/4}e^{-\pi z^2/2}G(z)B\hat{p}_1(z).
\]
That is,
  \[
  T\hat{p}_2(z)=G(z)T\hat{p}_1(z).
  \]
It follows from the definition of the operator $T$ that
 \[
\hat h_{p_2}(z)=G(z)\hat h_{p_1}(z).
\]
Since  $ G(z) $ is not a constant, $ p_1$ and $p_2 $ are linearly  independent.

(C2)\, $G(z) = e^{ cz_1 (z+z_0)^2}$ with $0<c<\pi$.
Fix some $\beta\in(c- {\pi}/{2}, {\pi}/{2}) $.
Then $|\beta-c|<\pi/2$.
Since $ e^{-\beta z_1 z^2}\in  F^2(\mathbb{C}) $,
 there is some $ p_1\in L^2(\mathbb{R}) $
such that $ B\hat{p}_1(z)=e^{-\beta z_1 z^2} $.
Note that
 \begin{align*}
     |G(z)B\hat{p}_1(z)|^2e^{-\pi|z|^2}
     &=|e^{(c-\beta)z_1 z^2}  e^{ 2cz_1z_0 z
     +c z_1z_0^2}|^2e^{-\pi|z|^2}
      \le
       e^{2c|z_0|^2+4c|z_0|\cdot |z|}
       e^{(2|c-\beta|-\pi)|z|^2}.
 \end{align*}
We have $ G(z)B\hat{p}_1(z)\in F^2(\mathbb{C}) $.
The rest can be proved similarly to Case (C1).
\end{proof}

The second part of Theorem~\ref{thm:t1} is a consequence of the following more
general result.

\begin{lemma} \label{Lm:L6}
Let $ \Lambda=a\mathbb{Z}^{1/2}\times b\mathbb{Z}^{1/2} $,
where
 $a$, $b$ and $c $  are positive numbers
satisfying   $  a^2c\in\pi\mathbb{Z}$, $b^2c\in\pi\mathbb{Z}$,
and $0<c< \pi  $.
Then $\Lambda$ does not do Gabor phase retrieval.
\end{lemma}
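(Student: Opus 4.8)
The plan is to reduce everything to the two construction lemmas already in hand: by Lemma~\ref{Lm:L5} it suffices to produce a single non-constant entire function $Q$ that is real-valued on the associated complex set $\Gamma^*_\Lambda$ and that, via Lemma~\ref{Lm10}, factors the Fourier--Gaussian data, i.e. admits linearly independent $p_1,p_2\in L^2(\bbR)$ with $\hat h_{p_2}=Q\,\hat h_{p_1}$. So the entire task is to exhibit one such $Q$ matching one of the three templates of Lemma~\ref{Lm10} while being real on $\Gamma^*_\Lambda$; once found, Lemmas~\ref{Lm10} and \ref{Lm:L5} close the argument immediately.

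First I would make $\Gamma^*_\Lambda$ explicit. By the definition (\ref{Eq1}), a point $(t,\omega)\in\Lambda=a\bbZ^{1/2}\times b\bbZ^{1/2}$ corresponds to $z=\omega+it$, so the real part $\omega$ ranges over $b\bbZ^{1/2}$ and the imaginary part $t$ over $a\bbZ^{1/2}$; writing $\omega^2=b^2 n$ and $t^2=a^2 m$ with $m,n\ge 0$ integers, the coordinate squares live in $b^2\bbZ_{\ge 0}$ and $a^2\bbZ_{\ge 0}$. The candidate I would try is the Gaussian-type function $Q(z)=e^{icz^2}$, which is precisely template (iii) of Lemma~\ref{Lm10} with $z_1=i$, $z_0=0$ and the given constant $c\in(0,\pi)$; it is non-constant, so Lemma~\ref{Lm10} supplies the required $p_1,p_2$. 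To check reality on $\Gamma^*_\Lambda$, I would expand, for $z=\omega+it$, $icz^2=-2c\omega t+ic(\omega^2-t^2)$, whence $Q(z)=e^{-2c\omega t}\,e^{ic(\omega^2-t^2)}$; the modulus factor $e^{-2c\omega t}$ is real and positive, so $Q(z)\in\bbR$ exactly when $c(\omega^2-t^2)\in\pi\bbZ$. Substituting $\omega^2=b^2 n$ and $t^2=a^2 m$ gives $c(\omega^2-t^2)=n(b^2 c)-m(a^2 c)$, which lies in $\pi\bbZ$ by the hypotheses $a^2 c\in\pi\bbZ$ and $b^2 c\in\pi\bbZ$. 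Hence $Q$ is real on $\Gamma^*_\Lambda$, and Lemma~\ref{Lm:L5} yields that $\Lambda$ does not do Gabor phase retrieval.

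I do not expect a serious structural obstacle; the whole difficulty is concentrated in guessing the phase-killing function $Q(z)=e^{icz^2}$. The mechanism is that squaring $z=\omega+it$ folds the two coordinates into the single combination $\omega^2-t^2$ that governs the argument of $Q$, while the square-root structure of $\Lambda$ forces $\omega^2-t^2$ into $b^2\bbZ+a^2\bbZ$; the arithmetic constraints $a^2 c,\,b^2 c\in\pi\bbZ$ then pin that argument down to $\pi\bbZ$, killing the phase. The one point that genuinely needs care is that $Q$ has order exactly $2$ and type $c$, which may exceed $\pi/2$, so case (ii) of Lemma~\ref{Lm10} does not cover it; this is exactly why the explicit exponential template (iii), valid for all $c\in(0,\pi)$, is the right match and must be the one invoked.
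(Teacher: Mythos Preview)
Your proposal is correct and follows essentially the same approach as the paper: choose $Q(z)=e^{\pm icz^2}$ (the paper uses $e^{-icz^2}$, you use $e^{icz^2}$, which is immaterial), verify it is real on $\Gamma^*_\Lambda$ via the arithmetic conditions $a^2c,b^2c\in\pi\bbZ$, invoke Lemma~\ref{Lm10}(iii) for the existence of $p_1,p_2$, and conclude. The only cosmetic difference is that you cite Lemma~\ref{Lm:L5} directly, whereas the paper inlines its content (constructing $f_1,f_2$ and appealing to Lemma~\ref{Lm:L1}); your packaging is slightly cleaner but the substance is identical.
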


\begin{proof}
Setting $G(z) =  e^{-icx^2} $ in
Lemma~\ref{Lm10}(iii) yields that
there exist   functions    $p_1,  p_2\in L^2(\mathbb{R}) $ such that
\begin{equation}\label{eq:e10}
  \hat h_{p_2}(z) =e^{-icz^2} \hat h_{p_1}(z),\quad z\in\mathbb C.
\end{equation}
Take some $c_1, c_2\in \mathbb C$ such that
$|c_1|=|c_2|=1$ and $c_1/c_2\not\in\bbR$.
Let $f_1$, $f_2$ be defined as in (\ref{Eq2}).
Then  $ f_1\nsim f_2 $,
thanks to Lemma~\ref{Lm:L1}.

It remains to show that
 $  |V_\varphi f_1(t,\omega)|=|V_\varphi f_2(t,\omega)|$ for all $(t,\omega)\in\Lambda $.

We see from (\ref{Eq2}) and (\ref{eq:e10})    that
\begin{align*}
    \hat{h}_{f_1} (z)&=c_1 \hat h_{p_1}(z)+c_2 \hat h_{p_2}(z)
        = (c_1+ c_2e^{-icz^2})\hat h_{p_1}(z) ,\\
    \hat{h}_{f_2} (z)&= (\bar c_1+\bar c_2 e^{-icz^2} )\hat h_{p_1}(z).
\end{align*}
For any  $ z\in\Gamma^*_\Lambda $,  we have
\[
  e^{-icz^2} =e^{-ic(x^2-y^2)} e^{2cxy} =e^{-ic(b^2m-a^2n)} e^{2cxy}.
\]
It follows from  $ a^2c, b^2c\in\pi\mathbb{Z}$  that
$ e^{-icz^2} \in\mathbb{R}  $.
Hence
 $ |\hat{h}_{f_1} (z)|=|\hat{h}_{f_2} (z)|$,
$\forall z\in\Gamma^*_\Lambda $.
By (\ref{eq:vphi:h}),
$  |V_\varphi f_1(t,\omega)|=|V_\varphi f_2(t,\omega)|$,
$\forall(t,\omega)\in\Lambda $.
This completes the proof.
\end{proof}

To prove the negative part of Theorem~\ref{thm:t2a},
we need the following lemma.

\begin{lemma}\label{Lm:L3}
Let $\Lambda = \cup_{l=1}^2 \big((t_0,\omega_0)+\bbR(\sin\theta_l,\cos\theta_l)\big)$, where $(t_0, \omega_0) \in \bbR^2$ and $\theta_1 -\theta_2\in \mathbb Q\pi$. Then there is an entire function $ Q $ with order less than $2 $ such that
$Q$ is not a constant and takes real values on $\Gamma^*_\Lambda$.
\end{lemma}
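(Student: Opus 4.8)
The plan is to pass to the complex plane through the correspondence (\ref{Eq1}) and reduce the statement to producing a single well-chosen monomial. A point $(t,\omega)$ on the $l$-th line of $\Lambda$ has the form $(t_0+x\sin\theta_l,\,\omega_0+x\cos\theta_l)$ for some $x\in\bbR$, so its associated complex number $\omega+it$ equals $z_0+xe^{i\theta_l}$ with $z_0:=\omega_0+it_0$. Hence $\Gamma^*_\Lambda=\cup_{l=1}^2\{z_0+xe^{i\theta_l}:\,x\in\bbR\}$ consists of two lines through $z_0$ with directions $e^{i\theta_1}$ and $e^{i\theta_2}$, and the task becomes to exhibit a non-constant entire function of order less than $2$ that is real on both of these lines.

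First I would record the elementary observation underlying the construction: for a real-coefficient entire function $P$, the map $z\mapsto P(e^{-i\theta}(z-z_0))$ is real on the line $\{z_0+xe^{i\theta}:\,x\in\bbR\}$, because on that line its argument reduces to $P(x)$. In particular, for any integer $q$ the unimodular multiple $e^{-iq\theta}(z-z_0)^q$ is real on that line. The whole problem is therefore to select one exponent $q$ for which a single such power is \emph{simultaneously} real on the two lines at angles $\theta_1$ and $\theta_2$.

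This is exactly where the rationality hypothesis is used. Writing $(\theta_1-\theta_2)/\pi=p/q$ in lowest terms with $q\ge 1$, I would set
\[
  Q(z):=e^{-iq\theta_2}(z-z_0)^q .
\]
On the second line, $z=z_0+xe^{i\theta_2}$ gives $Q(z)=x^q\in\bbR$; on the first line, $z=z_0+xe^{i\theta_1}$ gives $Q(z)=x^q e^{iq(\theta_1-\theta_2)}=x^q e^{ip\pi}=(-1)^p x^q\in\bbR$. Thus $Q$ is real on all of $\Gamma^*_\Lambda$. Since $q\ge 1$ (when $\theta_1$ and $\theta_2$ determine the same line one still has $q=1$ via the representation $0/1$ or $k/1$), $Q$ is a non-constant polynomial, hence an entire function of order $0<2$, which is what is required.

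I do not anticipate a genuine obstacle: the argument is a short computation once the correspondence is fixed. The only points requiring care are the bookkeeping of (\ref{Eq1}) that turns the two real lines into the two complex lines $z_0+\bbR e^{i\theta_l}$, and the recognition that the denominator $q$ of $(\theta_1-\theta_2)/\pi$ is precisely the exponent making $e^{iq(\theta_1-\theta_2)}=(-1)^p$ real. For completeness I would also remark that a polynomial has order $0$, so $Q$ comfortably falls under Lemma~\ref{Lm10}(i); together with Lemma~\ref{Lm:L5} this will deliver the failure of Gabor phase retrieval asserted in the case $(\theta_1-\theta_2)/\pi\in\mathbb{Q}$ of Theorem~\ref{thm:t2a}.
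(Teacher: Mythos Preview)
Your argument is correct, and it is genuinely simpler than the paper's. The paper builds the function
\[
  Q(z)=\sum_{n=0}^{2q-1}\exp\bigl\{(z-z_0)e^{-i(\theta_2+pn\pi/q)}\bigr\},
\]
an order-$1$ entire function, and then has to verify non-constancy by computing the $2q$-th derivative at $z_0$; the reality on each line comes from pairing terms $n\leftrightarrow 2q-n$ into conjugate pairs. Your monomial $Q(z)=e^{-iq\theta_2}(z-z_0)^q$ accomplishes the same task with order $0$, with non-constancy immediate, and with the reality check reduced to the single identity $e^{iq(\theta_1-\theta_2)}=e^{ip\pi}=(-1)^p$. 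Both constructions feed into Lemma~\ref{Lm10}(i) and Lemma~\ref{Lm:L5} in exactly the same way, so nothing is lost downstream; what your approach buys is brevity and the avoidance of any auxiliary computation, while the paper's exponential sum has the (unused here) feature of never vanishing identically on any ray and of being a template that generalizes to other symmetry groups.
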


\begin{proof}
Note that $\Gamma^*_\Lambda=\cup_{l=1}^2 \big( z_0+\bbR e^{i\theta_l} \big)$,
where
$z_0=\omega_0+it_0$.
Suppose that $ \theta_1-\theta_2=p\pi/q \in [-\pi,\pi] $, where $p$, $q$ are integers and $q>0$.

For all $ z=x+iy $, set
\[
    Q(z)=\sum_{n=0}^{2q-1}\exp\{ (z-z_0)e^{-i(\theta_2+pn\pi/q)} \}.
\]
Then $Q(z)$ is not a constant. To see this, take the $2q$-th derivative of $ Q(z) $. We obtain that
\begin{align*}
\frac{d^{2q}Q}{dz^{2q}}(z)
&=\sum_{n=0}^{2q-1} e^{-2q(\theta_2+pn\pi/q)i}   \exp\{ (z-z_0)e^{-i(\theta_2+pn\pi/q)} \}    \\
&=\sum_{n=0}^{2q-1} e^{-2q\theta_2i}   \exp\{ (z-z_0)e^{-i(\theta_2+pn\pi/q)} \}.
\end{align*}
Hence
\[
\frac{d^{2q}Q}{dz^{2q}}(z_0)=\sum_{n=0}^{2q-1} e^{-2q\theta_2i}=2qe^{-2q\theta_2i}\ne 0.
\]
Consequently, $Q(z)$ is not a constant. Moreover,
$ Q(z) $ is an entire
function of order $1$.

It remains to show that
$Q$ takes real values on $\Gamma^*_\Lambda$.
First, we consider the case $z\in z_0+\bbR e^{i\theta_2}$. Suppose that $z=z_0+re^{i\theta_2} $, where $ r\in\bbR $. We have
\[
Q(z)=\sum_{n=0}^{2q-1}\exp\{re^{-ipn\pi/q}\},\quad r\in\bbR .
\]

For $n=0$ or $q$, we have  $\exp\{re^{-ipn\pi/q}\}\in\bbR$.
For $1\le n\le q-1$, denote $ n^*=2q-n $. Since
\[
\exp\{re^{-ipn\pi/q}\}
+\exp\{re^{-ipn^*\pi/q}\}=\exp\{re^{-ipn\pi/q}\}+\exp\{re^{ipn\pi/q}\} \in \bbR,
\]
we have
\begin{equation}\label{eq:e1}
\sum_{n=0}^{2q-1}\exp\{re^{-ipn\pi/q}\} =
  1
  +\exp\{re^{-ip\pi}\}
  +
  \sum_{n=1}^{q-1}
  \Big(\exp\{re^{-ipn\pi/q}\}+\exp\{re^{-ipn^*\pi/q}\}\Big)
    \in \bbR.
\end{equation}
Hence $Q(z)\in\bbR$ for all $z\in z_0+\bbR e^{i\theta_2}$.

Next, we consider the case $z\in z_0+\bbR e^{i\theta_1}$. Suppose that $z=z_0+re^{i\theta_1} $, where $ r\in\bbR $.
Notice that
\begin{align*}
    Q(z)&=\sum_{n=0}^{2q-1}\exp\{re^{-i\big((\theta_2-\theta_1)+pn\pi/q\big)}\}\\
    &=\sum_{n=0}^{2q-1}\exp\{ re^{-ip(n-1)\pi/q} \}\\
    &=\sum_{m=-1}^{2q-2}\exp\{ re^{-ipm\pi/q} \}\\
    &=\sum_{m=0}^{2q-1}\exp\{ re^{-ipm\pi/q} \}.
\end{align*}
Now we see from (\ref{eq:e1}) that
$Q(z)\in\bbR$ for all $z\in z_0+\bbR e^{i\theta_1}$.
This completes the proof.
\end{proof}

We are now ready to give a proof of Theorem~\ref{thm:t2a}.

\begin{proof}[Proof of Theorem~\ref{thm:t2a}]
Fix some $f\in L^2(\bbR)$ and $\theta\in [0,2\pi]$.
Let $\tilde f(s) \!=\! e^{-2\pi i s(\omega_0+ t_0i)}f(s)$.
We have
\begin{align*}
|\hat h_f(\omega_0+ t_0i+ x e^{i\theta})|^2
 &=\Big| \int_{\bbR}  f(s) e^{-\pi s^2}
    e^{-2\pi i s(\omega_0+t_0i+x e^{i\theta})} \rmd s \Big|^2 \\
 &=\Big| \int_{\bbR} \tilde f(s) e^{-\pi s^2}
    e^{-2\pi i sxe^{i\theta}} \rmd s \Big|^2 \\
&=\iint_{\bbR^2} \tilde f(s)\bar{\tilde f}(t) e^{-\pi(s^2+t^2)}
     e^{-2\pi i sxe^{i\theta}}
       e^{2\pi i txe^{-i\theta}} \rmd s\,\rmd t  \\
&=\iint_{\bbR^2} \tilde f(s)\bar{\tilde f}(t) e^{-\pi(s^2+t^2)}
     e^{-2\pi i x(se^{i\theta} - te^{-i\theta})} \rmd s\,\rmd t  .
\end{align*}
Substitute $z\in\bbC$ for $x$  in the last equation, we obtain a function
\begin{align*}
F_f(z)
 &:=\iint_{\bbR^2} \tilde f(s)\bar{\tilde f}(t) e^{-\pi(s^2+t^2)}
     e^{-2\pi i z(se^{i\theta} - te^{-i\theta})} \rmd s\,\rmd t  .
\end{align*}
Now we see from Proposition~\ref{Prop1} that
$F_f(z)$ is an entire function.
Moreover, for $z= r e^{i\alpha}$,
we have
\begin{align}
|F_f(z)|
 &=\Big|\iint_{\bbR^2} \tilde f(s)\bar{\tilde f}(t) e^{-\pi(s^2+t^2)}
     e^{-2\pi i r(se^{i(\alpha+\theta)} - te^{i(\alpha-\theta)})} \rmd s\,\rmd t  \Big| \nonumber \\
 &\le \iint_{\bbR^2} | f(s) f(t)| e^{2\pi(s+t)t_0}  e^{-\pi(s^2+t^2)}
     e^{2\pi r ( s \sin(\alpha+\theta) - t\sin(\alpha-\theta) )} \rmd s\,\rmd t \nonumber \\
&=  e^{ \pi(t_0+r\sin(\alpha+\theta))^2}
    e^{ \pi(t_0-r\sin(\alpha-\theta))^2}
\int_{\bbR} |f(s)  | e^{-\pi(s - (t_0+r\sin(\alpha+\theta)))^2}
      \rmd s \nonumber \\
&\qquad \times
\int_{\bbR} |f(t)  | e^{-\pi(t - (t_0-r\sin(\alpha-\theta)))^2}
      \rmd t
         \nonumber \\
&\le C
     e^{ \pi(t_0+r\sin(\alpha+\theta))^2}
    e^{ \pi(t_0-r\sin(\alpha-\theta))^2}\|f\|_2^2
     . \label{eq:ea3}
\end{align}
Hence $F_f $ is of order $2$.
Note that
\begin{align*}
 \limsup_{r\to\infty}
\frac{\pi(t_0+r\sin(\alpha+\theta))^2+\pi(t_0-r\sin(\alpha-\theta))^2 }
     {r^2}
=
 \pi\Big(\sin^2(\alpha+\theta)+\sin^2(\alpha-\theta)\Big).
\end{align*}
We have
\begin{align*}
   H_{F_f}\Big( \pm\frac{\pi}{4}\Big)
=    H_{F_f}\Big(  \pm\frac{5\pi}{4}\Big)
 =\pi.
\end{align*}
Setting $\theta=\theta_1, \theta_2$
 in above
arguments respectively, we obtain functions $F_{f,l}$ of order $2$
for which
\begin{align*}
   H_{F_{f,l}}\Big( \pm\frac{\pi}{4}\Big)
=    H_{F_{f,l}}\Big(  \pm\frac{5\pi}{4}\Big)
 =\pi,\qquad l=1,2
\end{align*}
and
\[
  |\hat h_f(\omega_0+it_0 + x e^{i\theta_l})|^2
   = F_{f,l}(x),\quad x\in\bbR, l=1,2.
\]
Let $F_{g,l}$ be defined similarly, where $g\in L^2(\bbR)$.
By (\ref{eq:vphi:h}),
if $|V_{\varphi}f| = |V_{\varphi} g|$
on $\Lambda$, then
\[
  |\hat h_f(\omega_0+it_0 + \lambda e^{i\theta_l})| =
 |\hat h_g(\omega_0+it_0 + \lambda e^{i\theta_l})|,
 \quad \forall \lambda\in a \bbZ^{\nu}, l=1,2.
\]
Hence
\[
  F_{f,l}(\lambda) = F_{g,l}(\lambda), \quad \forall
  \lambda\in a \bbZ^{\nu}, l=1,2.
\]
That is,
$F_{f,l}$ and $F_{g,l}$ agree on $a \bbZ^{\nu}$.

Denote by $n_l(r)$ the number of zeros of $F_{f,l}-F_{g,l}$ on the interval $[-r,r]$.
If $\nu<1/2$, then we have
\[
  \lim_{r\to\infty} \frac{n_l(r)}{r^2} = \infty.
\]
By Proposition~\ref{prop:pa}, we obtain that
  $F_{f,l}-F_{g,l}\equiv 0$.

If $\nu=1/2$ and $a<1$, then Corollary~\ref{Co:c1}
yields that
both functions $F_{f,l}$ and $F_{g,l}$
are  uniquely determined by their values
on $ a \mathbb Z^{1/2}$.
Hence $F_{f,l}(x) = F_{g,l}(x)$, $\forall x\in\bbR$.

In both cases, we obtain that
\[
  |\hat h_f(\omega_0+it_0 + xe^{i\theta_l})| =
 |\hat h_g(\omega_0+it_0 + xe^{i\theta_l})|,
 \quad \forall x\in\bbR,\,l=1,2.
\]
Since both
$ \hat h_f$
and
$\hat h_g $
are entire
functions of finite order,
by Proposition~\ref{prop:p2}, we obtain $\hat h_f\sim\hat h_g$.
Hence $f\sim g$.

Next we prove the negative part.
There are five cases.

(i)\, $(\theta_1 -\theta_2)/\pi \in \mathbb Q$.

By Lemma~\ref{Lm:L3}, there is some entire function
$Q$ with order less than $2$ which takes real values on $\Gamma^*_\Lambda$.
And by Lemma~\ref{Lm10}, there exist functions
$p_1, p_2\in L^2(\bbR)$ such that
\[
\hat h_{p_2}(z)=Q(z)\hat h_{p_1}(z),\quad z\in\mathbb C.
\]
We see from Lemma~\ref{Lm:L5} that
$\Lambda$ does not do Gabor phase retrieval.

(ii) \, $\nu>1/2$.

See Remark~\ref{Rm:r1}.

(iii) \, $\nu=1/2$, $a>1$ and $|\cos (\theta_1-\theta_2)|>1/a^2$.

By Lemma~\ref{Lm10}, for any $0<c<\pi$,
there exist   functions    $p_1,  p_2\in L^2(\mathbb{R}) $ such that
\[
      \hat h_{p_2}(z) = e^{c e^{i\alpha} (z-z_0)^2} \hat h_{p_1}(z),
      \quad \forall z\in\mathbb C.
\]
By Lemma~\ref{Lm:L5},
 it suffices to show that for some $c$,
$Q(z):=e^{c e^{i\alpha} (z-z_0)^2}$ takes real values on $\Gamma_{\Lambda}^*$.

Since $|\cos (\theta_1-\theta_2)|>1/a^2$, there is some $0<c<\pi$
such that
\[
  c a^2 |\cos (\theta_1-\theta_2)| = \pi.
\]
Set $\alpha = \pi/2-\theta_1-\theta_2$.
For $z=z_0 \pm ak^{1/2}e^{i\theta_1}$, we have
\begin{align*}
  Q(z) = \exp\{c  e^{i\alpha}  a^2k e^{2i\theta_1}\}
  &= \exp\{c a^2k (\cos(\alpha+2\theta_1)+i\sin(\alpha+2\theta_1))  \}\\
 & = \exp\{c a^2k (\cos(\alpha+2\theta_1)+i\cos(\theta_1-\theta_2))  \}.
\end{align*}
And for
$z=z_0 \pm ak^{1/2}e^{i\theta_2}$, we have
\begin{align*}
  Q(z) = \exp\{c  e^{i\alpha}  a^2k e^{2i\theta_2}\}
 & = \exp\{c a^2k (\cos(\alpha+2\theta_2)+i\cos(\theta_2-\theta_1))  \}.
\end{align*}
In both cases, we have $Q(z)\in\bbR$.

(iv) \, $\nu=1/2$, $a>1$ and $|\sin (\theta_1-\theta_2)|>1/a^2$.

Let $p_1$, $p_2$ and $Q$ be defined as in Case (iii).
Since $|\sin (\theta_1-\theta_2)|>1/a^2$, there is some $0<c<\pi$
such that
\[
  c a^2 |\sin (\theta_1-\theta_2)| = \pi.
\]
Set $\alpha = -\theta_1-\theta_2$.
For $z=z_0 \pm ak^{1/2}e^{i\theta_1}$, we have
\begin{align*}
  Q(z) = \exp\{c  e^{i\alpha}  a^2k e^{2i\theta_1}\}
  &= \exp\{c a^2k (\cos(\alpha+2\theta_1)+i\sin(\alpha+2\theta_1))  \}\\
 & = \exp\{c a^2k (\cos(\alpha+2\theta_1)+i\sin(\theta_1-\theta_2))  \}.
\end{align*}
And for
$z=z_0 \pm ak^{1/2}e^{i\theta_2}$, we have
\begin{align*}
  Q(z) = \exp\{c  e^{i\alpha}  a^2k e^{2i\theta_2}\}
 & = \exp\{c a^2k (\cos(\alpha+2\theta_2)+i\sin(\theta_2-\theta_1))  \}.
\end{align*}
In both cases, we have $Q(z)\in\bbR$.

(v) \, $\nu=1/2$, $a>1$ and  $|\sin(2\theta_1-2\theta_2)|>1/a^2$.

Let $p_1$, $p_2$ and $Q$ be defined as in Case (iii).
Since $|\sin(2\theta_1-2\theta_2)|>1/a^2$, there is some $0<c<\pi$
such that
\[
  c a^2 |\sin (2\theta_1-2\theta_2)| = \pi.
\]
Set $\alpha = -2\theta_2$.
For $z=z_0 \pm ak^{1/2}e^{i\theta_1}$, we have
\begin{align*}
  Q(z) = \exp\{c  e^{i\alpha}  a^2k e^{2i\theta_1}\}
  &= \exp\{c a^2k (\cos(\alpha+2\theta_1)+i\sin(\alpha+2\theta_1))  \}\\
 & = \exp\{c a^2k (\cos(\alpha+2\theta_1)+i\sin(2\theta_1-2\theta_2))  \}.
\end{align*}
And for
$z=z_0 \pm ak^{1/2}e^{i\theta_2}$, we have
\begin{align*}
  Q(z) = \exp\{c  e^{i\alpha}  a^2k e^{2i\theta_2}\}
 & = \exp\{c a^2k  \cos(\alpha+2\theta_2)  \}.
\end{align*}
In both cases, we have $Q(z)\in\bbR$.
This completes the proof.
\end{proof}

The following lemma is used in the proof of
Theorem~\ref{thm:t2b}.

\begin{lemma} \label{Lm:L4}
Let $\Lambda = \cup_{l=1}^3 \big((t_l,\omega_l)+\bbR(\sin\theta_0,\cos\theta_0) \big)$, where $\theta_0 \in [0,2\pi]$, $(t_l, \omega_l) \in \bbR^2$, $l=1,2,3$,  $d_l:=|(\omega_l-\omega_3)\sin\theta_0
- (t_l-t_3)\cos\theta_0|>0$, $l=1,2$
and $d_1/d_2\in  \mathbb Q$. Then there is an
entire function $ Q $ with order less than $2$
such that $Q$ is not a constant and takes real values on $\Gamma^*_\Lambda$.
\end{lemma}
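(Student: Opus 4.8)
The plan is to transfer the statement to the complex plane, where after a rotation the three parallel lines become horizontal lines, and then to use the rationality of $d_1/d_2$ to exhibit a single exponential that is real on all of them.

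First I would rewrite $\Gamma^*_\Lambda$ conveniently. Setting $z_l=\omega_l+it_l$, the defining relation \eqref{Eq1} turns the line $\{(t_l,\omega_l)+x(\sin\theta_0,\cos\theta_0):x\in\bbR\}$ into $\{z_l+xe^{i\theta_0}:x\in\bbR\}$, so that $\Gamma^*_\Lambda=\cup_{l=1}^3\{z_l+xe^{i\theta_0}:x\in\bbR\}$ is a union of three parallel lines of common direction $e^{i\theta_0}$. A direct computation gives
\[
  \Im\big(e^{-i\theta_0}(z_l-z_3)\big)
  = -\big[(\omega_l-\omega_3)\sin\theta_0-(t_l-t_3)\cos\theta_0\big],
\]
so, writing $\delta_l:=\Im(e^{-i\theta_0}(z_l-z_3))$, I get $|\delta_l|=d_l$ for $l=1,2$ and $\delta_3=0$. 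Equivalently, in the coordinate $w=e^{-i\theta_0}(z-z_3)$ the three lines are exactly $\Im w\in\{0,\delta_1,\delta_2\}$.

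Next I would use $d_1/d_2\in\mathbb Q$. Writing $d_1/d_2=p/q$ with positive integers $p,q$ and setting $\eta:=d_1/p=d_2/q>0$, I obtain $d_l/\eta\in\bbZ$, hence $\delta_l/\eta\in\bbZ$ for $l=1,2$ (and trivially for $l=3$). I then define
\[
  Q(z):=\exp\Big\{\frac{2\pi}{\eta}\,e^{-i\theta_0}(z-z_3)\Big\}.
\]
This is entire and non-constant (the coefficient $2\pi e^{-i\theta_0}/\eta$ is nonzero) and clearly of order $1<2$. To verify that it is real on $\Gamma^*_\Lambda$, I write a point of the $l$-th line as $z=z_l+xe^{i\theta_0}$, so that with $\sigma_l:=\Re(e^{-i\theta_0}(z_l-z_3))$,
\[
  \frac{2\pi}{\eta}\,e^{-i\theta_0}(z-z_3)=\frac{2\pi}{\eta}(\sigma_l+x)+2\pi i\,\frac{\delta_l}{\eta}.
\]
Since $\delta_l/\eta\in\bbZ$, the imaginary part contributes $e^{2\pi i(\delta_l/\eta)}=1$, leaving $Q(z)=e^{(2\pi/\eta)(\sigma_l+x)}\in\bbR$. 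Thus $Q$ is real on each of the three lines, hence on all of $\Gamma^*_\Lambda$.

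The computation is routine throughout; the only point needing care is the coordinate bookkeeping in the first step, in particular the flip $(y,x)\in\Lambda$ in the definition of $\Gamma^*_\Lambda$, which must be tracked correctly to identify the signed heights $\delta_l$ and to confirm $|\delta_l|=d_l$. Once that is settled, the rationality of $d_1/d_2$ is precisely what forces all three heights to lie in a common lattice $\eta\bbZ$, and the single exponential $e^{(2\pi/\eta)w}$ suffices because it is real on every horizontal line whose height is an integer multiple of $\eta$.
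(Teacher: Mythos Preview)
Your argument is correct and essentially identical to the paper's: both rotate by $e^{-i\theta_0}$, translate by $z_3$, and take $Q(z)=\exp\{c\,e^{-i\theta_0}(z-z_3)\}$ for a suitable real constant $c$ chosen so that $c\,\delta_l\in\pi\bbZ$ (the paper uses $c=n\pi/d_1$ with $d_1/d_2=n/m$, yielding $Q=\pm1\cdot e^{\text{real}}$ on each line, whereas your choice $c=2\pi/\eta$ gives the slightly cleaner $Q=+1\cdot e^{\text{real}}$). The coordinate bookkeeping and the verification on the three lines match the paper's proof line by line.
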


\begin{proof}
Denote $ z_l=\omega_l+it_l$, $l=1,2,3$.
We have  $\Gamma^*_\Lambda=\cup_{l=1}^3 (z_l+\bbR e^{i\theta_0})$.
Suppose that
${d_1}/{d_2}=n/m $,
where $m$, $n$ are positive integers.

Let
\[
    Q(z):=\exp\Big\{\frac{n\pi (z-z_3)e^{-i\theta_0}}{d_1}\Big\}.
\]
It is easy to see that  $ Q(z) $ is an entire
function with order $1$.
Let us prove that $Q$ takes real values on $\Gamma_{\Lambda}^*$.
There are three cases.

(i) $ z= z_1+re^{i\theta_0},\ r\in\bbR $.

In this case, we have
\[
\frac{(z_1-z_3)e^{-i\theta_0}}{d_1}
=\frac{(\omega_1-\omega_3)\cos\theta_0
+(t_1-t_3)\sin\theta_0+i(t_1-t_3)\cos\theta_0
-i(\omega_1-\omega_3)\sin\theta_0}{d_1}.
\]
Note that
\[
\frac{i(t_1-t_3)\cos\theta_0-i(\omega_1-\omega_3)\sin\theta_0}{d_1}
=\pm i.
\]
We have
\[
    Q(z)= \exp\{\frac{n\pi r}{d_1}\}\exp\{\frac{n\pi(z_1-z_3)
     e^{-i\theta_0}}{d_1} \} \in \bbR,\ \forall  r\in\bbR .
 \]

(ii)\,   $ z\in z_2+re^{i\theta_0}$,
$r\in\bbR $.

We have
\[
\frac{(z_2-z_3)e^{-i\theta_0}}{d_1}
=\frac{(\omega_2-\omega_3)\cos\theta_0+(t_2-t_3)
 \sin\theta_0+i(t_2-t_3)\cos\theta_0
 -i(\omega_2-\omega_3)\sin\theta_0}{d_1}.
\]
Note  that
\[
\frac{i(t_2-t_3)\cos\theta_0-i(\omega_2-\omega_3)
 \sin\theta_0}{d_1} = \pm \frac{m}{n}i .
\]
We have
\[
    Q(z)=\exp\{\frac{n\pi r}{d_1}\}\exp\{\frac{n\pi(z_2-z_3)e^{-i\theta_0}}{d_1} \}\in \bbR,\ \forall r\in\mathbb{R}.
 \]

 (iii) \, $ z\in z_3+re^{i\theta_0}$, $ r\in\bbR $.

In this case,
\[
    Q(z)=\exp\{\frac{n\pi r}{d_1}\}\in\mathbb{R},\ \forall r\in\mathbb{R}.
\]
This completes the proof.
\end{proof}

\begin{proof}[Proof of Theorem~\ref{thm:t2b}]
Applying Propsotion~\ref{Prop3} instead of
Proposition~\ref{prop:p2},
with similar arguments as in the proof of  Theorem~\ref{thm:t2a} we obtain that
$\Lambda$ does Gabor phase retrieval
if
$d_1/d_2\not\in  \mathbb Q$
and either $\nu<1/2$ or $\nu=1/2$ with $a<1$.

For the negative part, there are three cases.

First, we see from Lemmas \ref{Lm:L5}, \ref{Lm10}  and \ref{Lm:L4} that
$\Lambda$ does  not do Gabor phase retrieval
if  $d_1/d_2 \in  \mathbb Q$.
This proves (i).

The proof of (ii) is postponed to Remark~\ref{Rm:r1}.
It remains to show that
$\Lambda$ does not do Gabor phase retrieval
if it meets (iii).

Set $Q(z) = e^{ic e^{-2i\theta_0}(z-z_3)^2}$.
By Lemma~\ref{Lm10}, for $c=\pi/a^2$,
there exist   functions    $p_1,  p_2\in L^2(\mathbb{R}) $ such that
\[
      \hat h_{p_2}(z) = Q(z) \hat h_{p_1}(z),
      \quad \forall z\in\mathbb C.
\]
By Lemma~\ref{Lm:L5},  it suffices to show that
$Q(z) $ takes real values on $\Gamma_{\Lambda}^*$.

For $z=z_1 \pm ak^{1/2} e^{i\theta_0}$,
we have $z-z_3 = d_1 e^{i(\theta_0\pm \pi/2)}\pm ak^{1/2} e^{i\theta_0}$.
Hence
\begin{align*}
Q(z) &= \exp\{ic e^{-2i\theta_0}(d_1 e^{i(\theta_0\pm \pi/2)}\pm ak^{1/2} e^{i\theta_0})^2 \} \\
&= \exp\{ic(a^2k-d_1^2)\pm 2acd_1 k^{1/2}\}\\
&\in \bbR.
\end{align*}
Similarly we obtain that
$Q(z)\in\bbR$ for $z=z_2\pm ak^{1/2} e^{i\theta_0}$
or $z=z_3\pm ak^{1/2} e^{i\theta_0}$.
This completes the proof.
\end{proof}

\begin{proof}[Proof of Theorem~\ref{thm:t1}]
If $a<1$, then Theorem~\ref{thm:t2b} shows
that $a\bbZ^{1/2}\times \{b, b\sqrt  2, b\sqrt 3\}$
does Gabor phase retrieval. So does $\Lambda$.
And the case $b<1$ can be proved similarly.

For the negative part, since $a=b>1$,
setting $c= \pi/a^2$ in Lemma~\ref{Lm:L6}
yields
that $\Lambda$ does not do Gabor phase retrieval.
\end{proof}

\section{Irregular sampling for Gabor phase retrieval}

In this section,
we
consider Gabor phase retrieval
with irregular sampling sequence.

First, we give a necessary condition for a sequence to do
Gabor phase retrieval.
Then we present some sufficient conditions.
We consider sampling sequences of the form $\{z_0\pm \lambda_n e^{i\theta}:
\, n\ge 1\}$,
where $z_0\in\mathbb C$,
$\theta\in [0,2\pi]$ and $\{\lambda_n:\,n\ge 1\}$ is a sequence
of  increasing positive numbers
which has a uniform density, that is,
the limit
\[
  a:=\lim_{n\to\infty} \frac{\lambda_n}{n^{1/2}}
\]
exists.

\subsection{A necessary condition}

For a sequence $\Lambda$  defined as in Theorem~\ref{thm:t2b} or \ref{thm:t2a},
we see from (\ref{eq:density}) that its density is
\[
  L_{\Lambda} = 2.
\]
We show that
$L_{\Lambda}\ge 2$ is necessary for $\Lambda$ to
 do Gabor phase retrieval.

\begin{theorem}\label{Th4}
Let $ \Lambda $ be a set in $ \mathbb{R}^2 $ with no accumulation points.
If $\Lambda$ does   Gabor phase retrieval,
then its density $L_{\Lambda}$ satisfies that
\[
  L_{\Lambda}\ge 2.
\]
\end{theorem}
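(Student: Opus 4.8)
The plan is to prove the contrapositive in spirit: if a sampling set $\Lambda$ does Gabor phase retrieval, it must contain enough points that its density $L_\Lambda$ is at least $2$. The natural strategy is to show that whenever $L_\Lambda<2$, one can build two functions $f_1\nsim f_2$ in $L^2(\bbR)$ whose Gabor transforms agree in absolute value on all of $\Lambda$. By Lemmas~\ref{Lm:L5} and~\ref{Lm10}, this reduces to producing a non-constant entire function $Q$ of order at most $2$ (and, in the borderline order-$2$ case, of type less than $\pi/2$) that takes real values on the associated complex set $\Gamma^*_\Lambda$.

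First I would translate the hypothesis $L_\Lambda<2$ into information about $\Gamma^*_\Lambda$. By the definition~\eqref{Eq1}, the counting functions of $\Lambda$ and $\Gamma^*_\Lambda$ coincide, so $L_{\Gamma^*_\Lambda}=L_\Lambda<2$. The goal is then to exhibit a non-constant entire $Q$ that is real on the countable set $\Gamma^*_\Lambda$ and whose order is strictly below $2$. The most direct construction is to let $Q$ be real-valued interpolation data: take any non-constant entire function whose restriction to $\Gamma^*_\Lambda$ is real. A clean way to force real values is to build $Q$ from a canonical product. Specifically, I would consider the set $S=\Gamma^*_\Lambda \cup \overline{\Gamma^*_\Lambda}$ (the points together with their complex conjugates) and form the canonical product $P$ over $S$. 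Because $S$ is conjugation-symmetric, $P$ can be normalized to have real Taylor coefficients, hence $P(\bar z)=\overline{P(z)}$; and $P$ vanishes on $\Gamma^*_\Lambda$, so $P$ is real (indeed zero) there. To get a non-constant $Q$ that is real but not identically zero on $\Gamma^*_\Lambda$, I would instead take $Q=P+c$ for a suitable real constant, or more robustly use a real-coefficient entire function satisfying $Q(\bar z)=\overline{Q(z)}$ that is real precisely because $\Gamma^*_\Lambda$ should be arranged to lie symmetrically; the key structural fact I would invoke is Proposition~\ref{Prop5} together with~\eqref{eq:rho}, which gives that the canonical product over $S$ has order equal to $\rho_S=L_S$.

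The crux is the order estimate. Since $S=\Gamma^*_\Lambda\cup\overline{\Gamma^*_\Lambda}$ has the same convergence exponent as $\Gamma^*_\Lambda$ (conjugation does not change moduli), we get $\rho_S=L_{\Gamma^*_\Lambda}=L_\Lambda<2$. By~\eqref{eq:rho} and Proposition~\ref{Prop5}, the canonical product over $S$ is entire of order exactly $\rho_S<2$. This places us squarely in case~(i) of Lemma~\ref{Lm10} (order strictly less than $2$), so there exist linearly independent $p_1,p_2\in L^2(\bbR)$ with $\hat h_{p_2}=Q\,\hat h_{p_1}$, and Lemma~\ref{Lm:L5} then yields $f_1,f_2$ with $|V_\varphi f_1|=|V_\varphi f_2|$ on $\Lambda$ but $f_1\nsim f_2$, contradicting the assumption that $\Lambda$ does Gabor phase retrieval.

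The main obstacle I anticipate is the realness requirement: the canonical product over $S$ vanishes on $\Gamma^*_\Lambda$, which is real there but trivially so, and a constant shift $Q=P+c$ need not keep order below $2$ if handled carelessly (a constant does, but then $Q$ is real only where $P$ is real, i.e.\ on $\Gamma^*_\Lambda$, which is exactly what we want). The delicate point is ensuring $Q$ is genuinely non-constant while staying real on $\Gamma^*_\Lambda$ and of order $<2$; the conjugate-symmetric canonical product with real coefficients achieves real values on the full real axis but we only need realness on $\Gamma^*_\Lambda$, so $Q=P$ (vanishing, hence real, on $\Gamma^*_\Lambda$) suffices provided $P$ is non-constant, which holds as soon as $\Gamma^*_\Lambda$ is infinite. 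The only remaining subtlety is the degenerate situation where $\Lambda$ is finite or $\Gamma^*_\Lambda$ fails to be infinite, where $L_\Lambda=0<2$ and the conclusion is immediate since a finite set manifestly cannot determine all of $L^2(\bbR)$ up to phase.
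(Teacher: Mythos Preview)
Your proposal is correct and ultimately follows the same route as the paper: build a non-constant entire function of order strictly less than~$2$ that is real on~$\Gamma^*_\Lambda$, then feed it through Lemma~\ref{Lm10}(i) and Lemma~\ref{Lm:L5}. The paper, however, skips your detour through the conjugate-symmetric set $S=\Gamma^*_\Lambda\cup\overline{\Gamma^*_\Lambda}$ and the discussion of $Q=P+c$: it simply takes $P$ to be the canonical product over $\Gamma^*_\Lambda$ itself (with a factor of $z$ if $0\in\Gamma^*_\Lambda$), observes via Proposition~\ref{Prop5} and~\eqref{eq:rho} that $P$ has order $L_\Lambda<2$, and notes that $P$ vanishes---hence is trivially real---on $\Gamma^*_\Lambda$, which is exactly the realization you reach at the end of your sketch.
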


\begin{proof} 
It suffices to show that
$\Lambda$ does not do  Gabor phase retrieval
if $L_{\Lambda}<2$.
Suppose that the genus of $ \Gamma^*_{\Lambda} $ is $ p $.
Set $ P(z)=\prod_{n=1}^\infty E_{p}( {z}/{z_n}) $ if $0\not\in\Gamma^*_{\Lambda}$, and
$ P(z)=z\prod_{n:\, z_n\ne 0} E_{p}({z}/{z_n}) $ if $0\in\Gamma^*_{\Lambda}$.
In both cases,  $P$ is an entire function with
order $\varrho$ equal to the convergence exponent
of $\Gamma$, thanks to Proposition~\ref{Prop5}.
By (\ref{eq:rho}), $\varrho=L_{\Lambda}<2$.
Moreover, $ P(z)=0$ for all $z\in\Gamma^*_{\Lambda} $.

By Lemma~\ref{Lm10}, there exist  $ p_1,p_2\in L^2(\mathbb{R}) $ which are linearly independent
such that
\[
\hat h_{p_2}(z)=P(z) \hat h_{p_1}(z).
\]
Now Lemma~\ref{Lm:L5} yields that
$\Lambda$ does not do Gabor phase retrieval.
\end{proof}

\begin{remark}\label{Rm:r1}
Let $\Lambda$ be defined as in Theorem~\ref{thm:t2b}
or \ref{thm:t2a}. When $\nu>1/2$, then we have
\[
  L_{\Lambda} = \frac{1}{\nu}<2.
\]
It follows from Theorem~\ref{Th4} that
$\Lambda$ does not do Gabor phase retrieval.
\end{remark}

\subsection{Phase retrieval with sampling sequences of uniform density}
Next we
consider Gabor phase retrieval
with irregular sampling sequences of the form
$\Lambda=\{\pm  \lambda_n^{1/2}:\, n\ge 1\}$.

We begin with some lemmas.
Setting $m=2$ in Lemma~\ref{Lm:L7}, we obtain the following
uniqueness result.

\begin{lemma}\label{Lm:La}
Let $\{\lambda_n:\, n\ge 1\}$ be an increasing
sequence of positive numbers such that the limit
\[
 a :=\lim_{n\to\infty} \frac{\lambda_n}{n^{1/2}}
\]
exists.
Let
$\Lambda=\{\pm  \lambda_n^{1/2}:\, n\ge 1\}$.
Suppose that $f$ is an entire function  of
order $2$ with finite type such that
$f$ vanishes on $\Lambda$.
If
\begin{align}
\max \Big\{ H_{f}\Big(\frac{\pi}{4}\Big) ,
     H_{f}\Big(\frac{5\pi}{4}\Big) \Big\}
     +
  \max \Big\{ H_{f}\Big(-\frac{\pi}{4}\Big) ,
     H_{f}\Big(-\frac{5\pi}{4}\Big)\Big\} < \frac{2\pi}{a^2} ,
\end{align}
then $f(z)=0$ for all $z\in\mathbb C$.
\end{lemma}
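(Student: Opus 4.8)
The plan is to derive this lemma as the special case $m=2$ of Lemma~\ref{Lm:L7}, so the whole proof reduces to checking that each hypothesis of that lemma specializes to the one stated here. First I would apply Lemma~\ref{Lm:L7} with $m=2$ to the sequence enumerating the positive part of $\Lambda$. With $m=2$ one has $e^{2\pi i/m}=e^{\pi i}=-1$, so the powers $e^{2l\pi i/m}$ for $0\le l\le m-1$ are exactly $1$ and $-1$; hence the vanishing hypothesis of Lemma~\ref{Lm:L7}, that $f$ vanish at $\lambda_n e^{2l\pi i/m}$ for all $n$ and $l$, becomes the requirement that $f$ vanish at each symmetric pair $\pm\lambda_n^{1/2}$, i.e. on all of $\Lambda$. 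The order-and-type hypothesis matches directly, since $m=2$ is precisely the assumed order $2$ with finite type, and the density normalization is the same: with $m=2$ the exponent $n^{1/m}$ is $n^{1/2}$ and the threshold $2\pi/a^m$ is $2\pi/a^2$, with $a$ the limit in the statement.

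The one computation worth carrying out is the correspondence of indicator directions. Substituting $m=2$ into the two maxima appearing in Lemma~\ref{Lm:L7} gives the angles $\tfrac{(4l+1)\pi}{4}$, namely $\tfrac{\pi}{4}$ and $\tfrac{5\pi}{4}$, and $\tfrac{(4l-1)\pi}{4}$, namely $-\tfrac{\pi}{4}$ and $\tfrac{3\pi}{4}$, as $l$ runs over $0,1$. The first pair reproduces $\max\{H_f(\pi/4),H_f(5\pi/4)\}$ verbatim. For the second pair I would invoke the $2\pi$-periodicity of the indicator function (already implicit in the paper's use of $H_f$ at $5\pi/4$) to rewrite $H_f(3\pi/4)=H_f(-5\pi/4)$, so that
\[
\max_{0\le l\le 1}\Big\{H_f\Big(\tfrac{(4l-1)\pi}{4}\Big)\Big\}=\max\Big\{H_f\Big(-\tfrac{\pi}{4}\Big),\,H_f\Big(-\tfrac{5\pi}{4}\Big)\Big\}.
\]
Adding the two maxima then turns the hypothesis of Lemma~\ref{Lm:L7} into exactly the displayed inequality $<2\pi/a^2$.

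With all hypotheses matched, Lemma~\ref{Lm:L7} yields $f(z)=0$ for all $z\in\bbC$, which is the conclusion. I do not expect a genuine obstacle, as the statement is a direct specialization; the only step requiring attention is the bookkeeping of the four indicator directions, and in particular the periodicity identification $H_f(3\pi/4)=H_f(-5\pi/4)$ that reconciles the directions produced by Lemma~\ref{Lm:L7} with the symmetric directions $\pm\pi/4,\pm5\pi/4$ written in the statement.
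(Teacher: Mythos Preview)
Your approach matches the paper's exactly; the paper's proof is the single sentence that the lemma follows by setting $m=2$ in Lemma~\ref{Lm:L7}, and your check of the indicator angles (including the identification $H_f(3\pi/4)=H_f(-5\pi/4)$ via $2\pi$-periodicity) is correct and more explicit than what the paper writes out. One caveat worth noting: the printed $\Lambda=\{\pm\lambda_n^{1/2}\}$ is evidently a typo for $\{\pm\lambda_n\}$ (compare Corollary~\ref{Co:c1} and how the lemma is invoked in Theorems~\ref{thm:t5a} and~\ref{thm:d}); with the literal reading the sequence fed into Lemma~\ref{Lm:L7} would satisfy $\lim\lambda_n^{1/2}/n^{1/2}=0$ rather than $a$, so your line ``the density normalization is the same'' tacitly assumes the intended statement.
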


Next we show that for
a sequence $\Lambda$  satisfying
certain density condition,
there exists an entire function of order $2$
which vanishes on $\Gamma_{\Lambda}^*$.

\begin{lemma}\label{Lm:L9}
Let $\{\lambda_{n,l}:\, n\ge 1\}$, $l=1,2$ be increasing
sequences of positive numbers such that the limits
\[
 a_l :=\lim_{n\to\infty}
    \frac{\lambda_{n,l}}{n^{1/2}},
    \quad l=1,2
\]
exist.
Suppose that
$
  \Lambda  =  \cup_{l=1}^2 \big\{(t_0,\omega_0)\pm \lambda_{n,l}(\sin\theta_l,\cos\theta_l):\, n\ge 1\big\},
$
where $(t_0, \omega_0) \in \bbR^2$.
Then there
is an entire function  $ U_1(z) $
with order $2$ and type
$\tau\le \pi(1/a_1^2+1/a_2^2)$
such that $U_1$ vanishes on $\Gamma^*_{\Lambda} $.
\end{lemma}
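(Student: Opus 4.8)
The plan is to construct $U_1$ explicitly as a product of two canonical products, one adapted to each of the two lines. Writing $z_0 = \omega_0 + it_0$, the set $\Gamma^*_{\Lambda}$ consists of the points $z_0 \pm \lambda_{n,l} e^{i\theta_l}$ for $n \ge 1$, $l = 1,2$. For each $l$, I would first examine the univariate sequence $\{\pm \lambda_{n,l}^2 : n \ge 1\}$ (or rather the points $z_0 \pm \lambda_{n,l}e^{i\theta_l}$) and argue, via the hypothesis $\lambda_{n,l}/n^{1/2} \to a_l$, that the convergence exponent of the shifted sequence $\{\lambda_{n,l} e^{i\theta_l}\}_{n}$ equals $2$. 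Indeed, $\sum_n \lambda_{n,l}^{-\alpha} = \sum_n (\lambda_{n,l}/n^{1/2})^{-\alpha} n^{-\alpha/2}$ converges iff $\alpha > 2$, so the convergence exponent is exactly $2$ and the genus is $1$. By Proposition~\ref{Prop5}, the canonical product of genus $1$ over this sequence is an entire function of order $2$.

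Concretely, for each $l$ I would set
\[
  P_l(z) = \prod_{n=1}^{\infty}
  \Big(1 - \frac{z - z_0}{\lambda_{n,l} e^{i\theta_l}}\Big)
  \Big(1 + \frac{z - z_0}{\lambda_{n,l} e^{i\theta_l}}\Big)
  = \prod_{n=1}^{\infty}
  \Big(1 - \frac{(z - z_0)^2}{\lambda_{n,l}^2 e^{2i\theta_l}}\Big),
\]
which is a genus-$1$ canonical product in the variable $z - z_0$ having exactly the desired zeros $z_0 \pm \lambda_{n,l} e^{i\theta_l}$; note the symmetric pairing lets the genus-$1$ exponential correction factors cancel, leaving a clean product over squares. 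Then I would define $U_1(z) = P_1(z) P_2(z)$. By construction $U_1$ vanishes on all of $\Gamma^*_{\Lambda}$, and since each $P_l$ has order $2$, the product $U_1$ has order $2$ as well.

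The main work, and the main obstacle, is the sharp type estimate $\tau \le \pi(1/a_1^2 + 1/a_2^2)$. The order and the vanishing are essentially formal consequences of Proposition~\ref{Prop5} and the convergence-exponent computation above, but pinning down the constant in the type requires controlling $\log|P_l(z)|$ uniformly. The plan is to estimate the indicator, or directly $\log M_{P_l}(r)$, using the asymptotic density $\lambda_{n,l}^2 \sim a_l^2 n$: for a canonical product whose squared-modulus zeros have density $1/a_l^2$ along the direction $e^{2i\theta_l}$, the classical relation between zero density and type (the converse direction to Proposition~\ref{prop:pa}, obtainable by comparing $P_l$ to a reference function such as a ratio of $\sin$-type or reciprocal-Gamma factors with exactly the limiting density) gives that the order-$2$ type of $P_l$ is at most $\pi/a_l^2$. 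Since type is subadditive under multiplication of functions of the same order, $U_1 = P_1 P_2$ then has type at most $\pi/a_1^2 + \pi/a_2^2$. The delicate point will be handling the error between $\lambda_{n,l}^2$ and $a_l^2 n$ uniformly enough that it does not inflate the type constant; I would isolate this by splitting the product into a principal part matching the exact density and a convergent correction, and showing the correction contributes order strictly less than $2$, hence zero type at order $2$.
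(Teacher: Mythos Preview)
Your construction has a convergence problem that breaks the argument at the outset. Since $\lambda_{n,l}\sim a_l n^{1/2}$, the series $\sum_n \lambda_{n,l}^{-2}$ behaves like $\sum_n 1/(a_l^2 n)$ and \emph{diverges}. Hence the convergence exponent is $2$ but the genus is $2$, not $1$: the minimal integer $\alpha$ with $\sum \lambda_{n,l}^{-\alpha}<\infty$ is $\alpha=3$, so $p=\alpha-1=2$. Consequently your paired product
\[
  P_l(z)=\prod_{n\ge 1}\Big(1-\frac{(z-z_0)^2}{\lambda_{n,l}^2 e^{2i\theta_l}}\Big)
\]
does not converge (the general term is $\sim -w^2/(a_l^2 n)$, a divergent harmonic series in the logarithm), so $P_l$ is not an entire function. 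If you upgrade to genus $2$, the symmetric pairing no longer kills the correction: $E_2(u)E_2(-u)=(1-u^2)e^{u^2}$, and the leftover factor $\exp\{w^2\sum_n\lambda_{n,l}^{-2}\}$ again diverges, so there is no clean product formula on which to run a type estimate.

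The paper's fix is to throw in extra zeros along the perpendicular direction: for each $l$ it forms
\[
  f_l(z)=\prod_{n\ge1} E_2\Big(\frac{z}{\lambda_{n,l}}\Big)E_2\Big(\frac{-z}{\lambda_{n,l}}\Big)E_2\Big(\frac{z}{i\lambda_{n,l}}\Big)E_2\Big(\frac{-z}{i\lambda_{n,l}}\Big)
  =\prod_{n\ge1}\Big(1-\frac{z^4}{\lambda_{n,l}^4}\Big),
\]
where now all $E_2$ exponential factors cancel and the product converges absolutely because $\sum\lambda_{n,l}^{-4}<\infty$. The type bound then follows from the elementary identity $\sinh(\pi x)/(\pi x)=\prod_{n\ge1}(1+x^2/n^2)$: using $\lambda_{n,l}>(a_l-\varepsilon)n^{1/2}$ for $n>N$ one gets $|f_l(z)|\le C_\varepsilon(1+|z|^{4N})e^{\pi|z|^2/(a_l-\varepsilon)^2}$, hence type exactly $\pi/a_l^2$. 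Finally $U_1(z)=(z-z_0)\prod_{l=1}^2 f_l\big((z-z_0)e^{-i\theta_l}\big)$ vanishes on $\Gamma^*_\Lambda$ (the additional zeros on the orthogonal lines are harmless for the statement) and has type at most $\pi(1/a_1^2+1/a_2^2)$ by subadditivity. Your plan to ``compare to a reference function'' is exactly what the $\sinh$ identity accomplishes, but it only becomes available after the quartic symmetrisation.
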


\begin{proof}
Set
\[
  f_l(z):= \prod_{n=1}^{\infty}E_2(\frac{z}{\lambda_{n,l}})
  E_2(-\frac{z}{\lambda_{n,l}})
  E_2(\frac{z}{i\lambda_{n,l}})
  E_2(-\frac{z}{i\lambda_{n,l}}).
\]
By Proposition~\ref{Prop5}, $ f_l(z) $ is an entire function with order $ \varrho=2 $.
Moreover,
\begin{align*}
f_l(z)
  &=   \prod_{n=1}^{\infty} (  1 - \frac{z^4}{\lambda_{n,l}^4}).
\end{align*}
Since $\lim_{n\to\infty}\lambda_{n,l} /n^{1/2}
=a_l$, for any $\varepsilon>0$ small enough, there
exists some integer $N$ such that for any $n >N$,
\[
  \lambda_{n,l} > (a_{l} - \varepsilon)n^{1/2}.
\]
Hence
\begin{align*}
|f_l(z)|
  &\le    \prod_{n=1}^{\infty} (  1 + \frac{|z|^4}{\lambda_{n,l}^4}) \\
  &\le    \prod_{n=1}^{N} (  1 + \frac{|z|^4}{\lambda_{n,l}^4})
  \prod_{n=N+1}^{\infty} (  1 + \frac{|z|^4}{n^2 (a_l-\varepsilon)^4}).
\end{align*}
Applying the identity
$(\sinh \pi x)/(\pi x) = \prod_{n=1}^{\infty}
(1+ x^2/n^2)$ yields that
\[
  |f_l(z)|
  \le C_{\varepsilon} (1+|z|^{4N})
      \frac{\sinh (\pi |z|^2/(a_l-\varepsilon)^2)}
      {\pi |z|^2/(a_l-\varepsilon)^2}
  \le C'_{\varepsilon} (1+|z|^{4N}) e^{\pi |z|^2/(a_l-\varepsilon)^2}.
\]
Hence $f_l$ is of type $\pi/a_l^2$.
Let
\[
  U_1(z) = (z-(\omega_0+it_0))\prod_{l=1}^2
    f_l((z-\omega_0-it_0)e^{-i\theta_l}).
\]
Since the order and type of an entire function are
translation invariant,
$U_1$ is of order $2$ and type $ \pi(1/a_1^2+1/a_2^2)$.
Moreover, $ U_1$ vanishes on $\Gamma^*_{\Lambda} $.
\end{proof}

We are now ready to give
a result on irregular sampling of
Gabor phase retrieval over two intersecting
lines.

\begin{theorem}\label{thm:t5a}
Let $\{\lambda_{n,l}:\, n\ge 1\}$, $l=1,2$ be increasing
sequences of positive numbers such that the limits
\[
 a_l :=\lim_{n\to\infty} \frac{\lambda_{n,l}}{n^{1/2}},\quad l=1,2
\]
exist.
Suppose that
$
  \Lambda  =  \cup_{l=1}^2 \big\{(t_0,\omega_0)\pm \lambda_{n,l}(\sin\theta_l,\cos\theta_l):\, n\ge 1\big\},
$
where $(t_0, \omega_0) \in \bbR^2$.
If
\[
        \max\{a_1, a_2\}<1
\]
and $(\theta_1 -\theta_2)/\pi \not\in \mathbb Q $,
then
$\Lambda $ does Gabor phase retrieval.

Moreover, if $(\theta_1 -\theta_2)/\pi  \in \mathbb Q $ or $1/a_1^2 + 1/a_2^2 <1/2$,
then $\Lambda $ does not do Gabor phase retrieval.
\end{theorem}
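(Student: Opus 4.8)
The plan is to adapt the argument for Theorem~\ref{thm:t2a}, replacing the regular sequence $a\bbZ^{1/2}$ by the irregular sequences $\{\pm\lambda_{n,l}^{1/2}\}$ and invoking the irregular uniqueness result Lemma~\ref{Lm:La} in place of Corollary~\ref{Co:c1}. For the positive part, I would first fix $f,g\in L^2(\bbR)$ with $|V_{\varphi}f|=|V_{\varphi}g|$ on $\Lambda$. As in the proof of Theorem~\ref{thm:t2a}, translating by $z_0=\omega_0+it_0$ and passing to the Bargmann side via (\ref{eq:vphi:h}), I form for each $l=1,2$ the entire functions $F_{f,l}(z)$ and $F_{g,l}(z)$ of order $2$ satisfying $H_{F_{f,l}}(\pm\pi/4)=H_{F_{f,l}}(\pm5\pi/4)=\pi$ and agreeing with $|\hat h_f(z_0+xe^{i\theta_l})|^2$, respectively $|\hat h_g(\cdot)|^2$, for real $x$. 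The hypothesis then gives $F_{f,l}(\lambda_{n,l})=F_{g,l}(\lambda_{n,l})$ for all $n\ge1$, so the difference $F_{f,l}-F_{g,l}$ vanishes on $\{\pm\lambda_{n,l}^{1/2}\}$. Since this difference has order $2$, finite type, and indicator values $\pi$ in the four relevant directions, the condition $\max\{a_1,a_2\}<1$ ensures $H_{F_{f,l}-F_{g,l}}(\pi/4)+H_{F_{f,l}-F_{g,l}}(-\pi/4)\le 2\pi<2\pi/a_l^2$ for each $l$; Lemma~\ref{Lm:La} then forces $F_{f,l}\equiv F_{g,l}$. Hence $|\hat h_f|=|\hat h_g|$ on both full lines $z_0+\bbR e^{i\theta_l}$, and since $(\theta_1-\theta_2)/\pi\notin\mathbb Q$ and both are entire of finite order, Proposition~\ref{prop:p2} gives $\hat h_f\sim\hat h_g$, whence $f\sim g$.

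For the negative part I would treat the two sufficient conditions separately. When $(\theta_1-\theta_2)/\pi\in\mathbb Q$, the argument is identical to Case (i) of Theorem~\ref{thm:t2a}: Lemma~\ref{Lm:L3} produces a non-constant entire $Q$ of order less than $2$ taking real values on $\Gamma^*_{\Lambda}$ (note $\Gamma^*_{\Lambda}$ lies in the two full lines through $z_0$, so the irregularity of the radial sequence is irrelevant here), Lemma~\ref{Lm10} supplies $p_1,p_2$ with $\hat h_{p_2}=Q\hat h_{p_1}$, and Lemma~\ref{Lm:L5} concludes. When instead $1/a_1^2+1/a_2^2<1/2$, I would apply Lemma~\ref{Lm:L9} to obtain an entire function $U_1$ of order $2$ and type $\tau\le\pi(1/a_1^2+1/a_2^2)<\pi/2$ that vanishes on $\Gamma^*_{\Lambda}$. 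This $U_1$ meets condition (ii) of Lemma~\ref{Lm10} (order $2$, type less than $\pi/2$), so Lemma~\ref{Lm10} yields linearly independent $p_1,p_2\in L^2(\bbR)$ with $\hat h_{p_2}(z)=U_1(z)\hat h_{p_1}(z)$; since $U_1$ vanishes, hence is real, on $\Gamma^*_{\Lambda}$, Lemma~\ref{Lm:L5} shows $\Lambda$ does not do Gabor phase retrieval.

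The main obstacle is the positive part, specifically verifying that the indicator sum of the difference $F_{f,l}-F_{g,l}$ genuinely stays strictly below $2\pi/a_l^2$ so that Lemma~\ref{Lm:La} applies. One must check that subtracting does not inflate the indicator: since $H_{F_{f,l}}$ and $H_{F_{g,l}}$ equal $\pi$ in each of the four directions $\pm\pi/4,\pm5\pi/4$, the difference satisfies $H_{F_{f,l}-F_{g,l}}(\theta)\le\pi$ there, giving the sum at most $2\pi$, and $\max\{a_1,a_2\}<1$ makes $2\pi/a_l^2>2\pi$, closing the gap strictly. A secondary point worth care is confirming that $F_{f,l}-F_{g,l}$ indeed has finite type of order exactly $2$ (not lower), which is what licenses the use of Lemma~\ref{Lm:La} rather than the cruder Proposition~\ref{prop:pa}; this follows from the explicit Gaussian bound (\ref{eq:ea3}) established in the proof of Theorem~\ref{thm:t2a}, which I would reuse verbatim.
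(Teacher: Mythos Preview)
Your proposal is correct and follows essentially the same route as the paper: Lemma~\ref{Lm:La} in place of Corollary~\ref{Co:c1} for the positive part, and Lemmas~\ref{Lm:L3}/\ref{Lm:L9} together with Lemmas~\ref{Lm10} and~\ref{Lm:L5} for the two negative cases. One slip to fix: the difference $F_{f,l}-F_{g,l}$ vanishes on $\{\pm\lambda_{n,l}\}$, not on $\{\pm\lambda_{n,l}^{1/2}\}$---the $\lambda_{n,l}$ in the theorem are already the displacements along the line with $\lambda_{n,l}/n^{1/2}\to a_l$, so they feed directly into the hypothesis of Lemma~\ref{Lm:La} without any square root.
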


\begin{proof}
Applying Lemma~\ref{Lm:La} instead of Corollary~\ref{Co:c1},
with almost the same arguments as that in the proof
of Theorem~\ref{thm:t2b} we obtain the positive part.

Next we prove the negative part.
For the case
$(\theta_1-\theta_2)/\pi\in \mathbb Q$,
with verbatim arguments as that in the proof of
Theorem~\ref{thm:t2a} we obtain that
$\Lambda $ does not do Gabor phase retrieval.

Finally we consider the case $1/a_1^2 + 1/a_2^2 <1/2$.

By Lemma~\ref{Lm:L9},
there is an entire function  $ U_1(z) $
with order $2$ and type
\[
\tau\le \pi \Big(\frac{1}{a_1^2}+\frac{1}{a_2^2}\Big)<\frac{\pi}{2}
\]
such that $U_1$ vanishes on $\Gamma^*_{\Lambda_1\cup\Lambda_2} $.
Applying Lemma~\ref{Lm10} again we obtain two functions  $ p_1,p_2\in L^2(\mathbb{R}) $ which are linearly independent
such that
\[
\hat h_{p_2}(z)=U_1(z) \hat h_{p_1}(z).
\]
We see from Lemma~\ref{Lm:L5} that
$\Lambda$ does not do Gabor phase retrieval.
\end{proof}

For Gabor phase retrieval over
three parallel lines with irregular sampling
sequences, we have the following results,
which can be proved similarly to
Theorem~\ref{thm:t5a}.


\begin{theorem}\label{thm:t5b}
Let $\{\lambda_{n,l}:\, n\ge 1\}$, $l=1,2,3$ be increasing
sequences of positive numbers such that the limits
\begin{equation}\label{eq:ea4}
a_l :=\lim_{n\to\infty} \frac{\lambda_{n,l}}{n^{1/2}},
\quad l=1,2,3
\end{equation}
exist.

Suppose that
$\Lambda = \cup_{l=1}^3 \big\{(t_l,\omega_l)\pm \lambda_{n,l}(\sin\theta_0,\cos\theta_0):\, n\ge 1 \big\}$, where  $\theta_0 \in [0,2\pi]$,
$(t_1,\omega_1)$,
$(t_2,\omega_2)$ and
$(t_3,\omega_3)$ are three points
for which
\[
  d_l:=|(\omega_l-\omega_3)\sin\theta_0
        - (t_l-t_3)\cos\theta_0|>0, \quad l=1,2.
\]
If  $d_1/d_2\not\in  \mathbb Q$
and
\[
   \max\{a_1 ,a_2 ,a_3 \} <1,
\]
then  $\Lambda$ does Gabor phase retrieval.

Moreover, $\Lambda$ does not do Gabor phase retrieval
if   $d_1/d_2\in  \mathbb Q$
or  $1/a_1^2 + 1/a_2^2 + 1/a_3^2 < 1/2$.
\end{theorem}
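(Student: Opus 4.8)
The plan is to mirror the proof of Theorem~\ref{thm:t5a} almost verbatim, exploiting the structural parallel between ``two intersecting lines'' and ``three parallel lines'' that the excerpt has already set up. For the \emph{positive} part, I would follow the argument used in the proof of Theorem~\ref{thm:t2b}, but with Lemma~\ref{Lm:La} in place of Corollary~\ref{Co:c1}. Concretely, for a fixed line direction $\theta_0$ and each $l=1,2,3$ I would form the function $F_{f,l}(z)$ by substituting $z$ for $x$ in $|\hat h_f(z_l + xe^{i\theta_0})|^2$ exactly as in the proof of Theorem~\ref{thm:t2a}; this is entire of order $2$ with the same indicator values $H_{F_{f,l}}(\pm\pi/4)=H_{F_{f,l}}(\pm 5\pi/4)=\pi$. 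Given $|V_\varphi f|=|V_\varphi g|$ on $\Lambda$, the functions $F_{f,l}-F_{g,l}$ vanish on $\{\pm\lambda_{n,l}:\,n\ge1\}$, so one applies Lemma~\ref{Lm:La} with the sequence $\{\lambda_{n,l}^2\}$ having density $a_l<1$. The condition $\max\{a_1,a_2,a_3\}<1$ guarantees $2\pi/a_l^2 > 2\pi = H_{F_{f,l}}(\pi/4)+H_{F_{f,l}}(-\pi/4)$ for each $l$, so Lemma~\ref{Lm:La} forces $F_{f,l}\equiv F_{g,l}$. This yields equality of $|\hat h_f|$ and $|\hat h_g|$ on all three parallel lines, and Proposition~\ref{Prop3} (which needs $d_1/d_2\notin\mathbb Q$) then gives $\hat h_f\sim\hat h_g$, hence $f\sim g$.

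For the \emph{negative} part there are two cases. If $d_1/d_2\in\mathbb Q$, I would invoke Lemma~\ref{Lm:L4} to produce a non-constant entire function $Q$ of order less than $2$ taking real values on $\Gamma^*_\Lambda$, then Lemma~\ref{Lm10}(i) to get $p_1,p_2\in L^2(\bbR)$ with $\hat h_{p_2}=Q\hat h_{p_1}$, and finally Lemma~\ref{Lm:L5} to conclude that $\Lambda$ fails Gabor phase retrieval. Note that Lemma~\ref{Lm:L4} is stated for the full lines $\cup_{l=1}^3(z_l+\bbR e^{i\theta_0})$, and since the irregular sampling set here is contained in those lines, $Q$ still takes real values on $\Gamma^*_\Lambda$, so the argument applies unchanged.

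The second negative case, $1/a_1^2+1/a_2^2+1/a_3^2<1/2$, is where I would use Lemma~\ref{Lm:L9}, but with a small adaptation: that lemma as stated covers \emph{two} intersecting lines, whereas here I need the three-parallel-line analogue. The natural fix is to build, for each $l$, a canonical product $f_l$ of order $2$ and type $\pi/a_l^2$ vanishing at $\{\pm\lambda_{n,l}, \pm i\lambda_{n,l}\}$ exactly as in Lemma~\ref{Lm:L9}, and set
\[
  U_1(z)=\prod_{l=1}^3 f_l\big((z-z_l)e^{-i\theta_0}\big),
\]
which is of order $2$ and type $\tau\le\pi(1/a_1^2+1/a_2^2+1/a_3^2)<\pi/2$ and vanishes on $\Gamma^*_\Lambda$. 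Lemma~\ref{Lm10}(ii) then supplies $p_1,p_2$ with $\hat h_{p_2}=U_1\hat h_{p_1}$, and Lemma~\ref{Lm:L5} finishes the argument. The main obstacle is precisely this bookkeeping: Lemma~\ref{Lm:L9} is phrased for two lines rotated by distinct $\theta_l$, so I must check that the translated-and-rotated product over three \emph{parallel} lines still has order $2$ and additive type, and that its type genuinely sums to $\pi(1/a_1^2+1/a_2^2+1/a_3^2)$ rather than only bounding it below the threshold by a smaller margin; since order and type are translation invariant and products of finitely many order-$2$, finite-type functions have type at most the sum of the individual types, this goes through, but it is the one place where the ``can be proved similarly'' claim requires genuine (if routine) verification.
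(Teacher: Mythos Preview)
Your proposal is correct and matches the paper's own approach: the paper simply states that Theorem~\ref{thm:t5b} ``can be proved similarly to Theorem~\ref{thm:t5a},'' and your outline is precisely that adaptation---Lemma~\ref{Lm:La} in place of Corollary~\ref{Co:c1} for the positive part (then Proposition~\ref{Prop3}), Lemma~\ref{Lm:L4} together with Lemmas~\ref{Lm10} and \ref{Lm:L5} for the rational-ratio case, and the three-factor product analogue of Lemma~\ref{Lm:L9} feeding into Lemma~\ref{Lm10}(ii) for the low-density case. One cosmetic slip: when you invoke Lemma~\ref{Lm:La} you should apply it directly with the sequence $\{\lambda_{n,l}\}$ (whose limit $\lambda_{n,l}/n^{1/2}\to a_l$) rather than with $\{\lambda_{n,l}^2\}$; the lemma (via Lemma~\ref{Lm:L7} with $m=2$) takes the zeros at $\pm\lambda_{n,l}$ and the density parameter is exactly $a_l$, so the condition $a_l<1$ gives $2\pi/a_l^2>2\pi$ as you intend.
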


\section{Phase retrieval for multivariate functions}

In this section, we show that  our method also works
for high-dimensional signals and  a class of window functions $\mathcal O_{\tau'}^{\tau}(\mathbb C^d)$
introduced in
\cite{GrohsLiehr2025}.

Recall that for $\tau,\tau'\in (0,\infty)^d$, $\mathcal O_{\tau'}^{\tau}(\mathbb C^d)$
consists of all entire functions
$f$ for which
\[
  |f(x+iy)| \le C \prod_{l=1}^d e^{-\tau'_lx_l^2  } e^{\tau_l y_l^2},
  \quad \forall x+iy\in\mathbb C^d.
\]
And $\mathcal O^{\tau}(\mathbb C^d)$
stands for the set
of all entire functions
$f$ for which
\[
  |f(x+iy)| \le C \prod_{l=1}^d   e^{\tau_l |z_l|^2},
  \quad \forall x+iy\in\mathbb C^d.
\]

Grohs and Liehr \cite[Theorem 1.1]{GrohsLiehr2025} proved that
if a window function $g$ is in $\mathcal O_{\tau'}^{\tau}(\mathbb C^d)$
and
\[
  a_l< \sqrt{\frac{1}{2\tau_l e}}\quad \mathrm{and}
  \quad
  b_l<\sqrt{\frac{\tau'_l}{2\pi^2 e}},
  \quad l=1,\ldots,d,
\]
then $\Lambda:= \prod_{l=1}^d a_l\bbZ^{1/2}  \times
\prod_{l=1}^d b_l\bbZ^{1/2}$ does phase retrieval
with respect to the window function $g$.

With Lemma~\ref{Lm:La},
we show that a lower sampling density is sufficient
for phase retrieval.
To this end, we need the following
uniqueness result
from
\cite{GrohsLiehr2025}.

\begin{proposition}[{\cite[Corollary 3.3]{GrohsLiehr2025}}] \label{prop:p9}
Suppose that
$\tau, \tau'\in (0,\infty)^d$
and $g\in \mathcal O_{\tau'}^{\tau}(\mathbb C^d)$.
If $f_1, f_2\in L^2(\bbR^d)$
and $|V_g f_1(t,\omega)| = |V_g f_2(t,\omega)|$
for all $(t,\omega)\in \bbR^{2d}$,
then
$f_1\sim f_2$.
\end{proposition}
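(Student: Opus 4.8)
The plan is to recover $f$ from the full spectrogram $|V_gf|^2$ through the radar ambiguity function, exploiting that the window class $\mathcal O_{\tau'}^{\tau}(\bbC^d)$ forces the ambiguity function of $g$ to be nonvanishing almost everywhere. For $h\in L^2(\bbR^d)$ set $\mathcal A_h(u,v):=\int_{\bbR^d}h(s)\overline{h(s-u)}e^{-2\pi i s\cdot v}\,\rmd s$. The first step is the classical spectrogram factorization: taking the (two dimensional) Fourier transform of $|V_gf|^2$ in the variable dual to $(t,\omega)$ and using $\int e^{-2\pi i(x-x')\omega}e^{2\pi i u\omega}\,\rmd\omega=\delta(u-(x-x'))$, one obtains
\[
  \mathcal F\big[|V_gf|^2\big](u,v)=\mathcal A_f(u,v)\,\overline{\mathcal A_g(u,v)}.
\]
Since $\|V_gf\|_{L^2(\bbR^{2d})}=\|f\|_2\|g\|_2<\infty$, the spectrogram lies in $L^1(\bbR^{2d})$, so both sides are continuous and the identity holds pointwise. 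Thus the hypothesis $|V_gf_1|=|V_gf_2|$ on $\bbR^{2d}$ yields $\mathcal A_{f_1}\overline{\mathcal A_g}=\mathcal A_{f_2}\overline{\mathcal A_g}$ everywhere.

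The key step, and the main obstacle, is to divide out $\mathcal A_g$, that is, to show $\mathcal A_g(u,v)\neq0$ for almost every $(u,v)$. Here the window class enters decisively. Because $g\in\mathcal O_{\tau'}^{\tau}(\bbC^d)$ is entire with $|g(x+iy)|\le C\prod_l e^{-\tau'_lx_l^2+\tau_ly_l^2}$, the reflection $\tilde g(w):=\overline{g(\bar w)}$ is again entire and satisfies the same bound, and for real $s,u$ one has $\overline{g(s-u)}=\tilde g(s-u)$, so $\mathcal A_g(u,v)=\int g(s)\tilde g(s-u)e^{-2\pi i s\cdot v}\,\rmd s$. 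Using the Gaussian decay $|g(s)|\lesssim\prod_l e^{-\tau'_ls_l^2}$ on $\bbR^d$, the modulus of the integrand is bounded, for fixed $(u,v)\in\bbC^{2d}$, by a constant times $\prod_l e^{-\tau'_ls_l^2-\tau'_l(s_l-\Re u_l)^2+2\pi s_l\Im v_l}$, which is integrable in $s$ and locally uniform in $(u,v)$. By Proposition~\ref{Prop1} (Morera together with Fubini), $\mathcal A_g$ extends to an entire function on $\bbC^{2d}$. It is not identically zero, since $\mathcal A_g(0,0)=\|g\|_2^2>0$; hence its restriction to $\bbR^{2d}$ is a nonzero real-analytic function and its zero set has Lebesgue measure zero. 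Consequently $\mathcal A_{f_1}=\mathcal A_{f_2}$ almost everywhere, and since each $\mathcal A_{f_l}$ is jointly continuous (continuity of translation in $L^2$ together with Cauchy--Schwarz), the equality holds everywhere.

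It remains to conclude $f_1\sim f_2$. Inverting the Fourier transform in $v$ recovers the rank-one kernel $f_l(r)\overline{f_l(r-u)}$ for all $r,u\in\bbR^d$, equivalently $f_l(r)\overline{f_l(r')}$ for all $r,r'$. If $f_1\equiv0$, then $\mathcal A_{f_1}\equiv0$ forces $\mathcal A_{f_2}\equiv0$ and hence $f_2\equiv0$. Otherwise fix $r_0$ with $f_1(r_0)\neq0$; then $f_1(r)\overline{f_1(r_0)}=f_2(r)\overline{f_2(r_0)}$ for all $r$ shows $f_2(r_0)\neq0$ and $f_2=\big(f_2(r_0)/f_1(r_0)\big)f_1$, while the diagonal $r=r_0$ gives $|f_2(r_0)|=|f_1(r_0)|$. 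The scalar is therefore unimodular and $f_1\sim f_2$.

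I expect the genuine difficulty to sit entirely in the second paragraph: making the convergence and joint analyticity of $\mathcal A_g$ on $\bbC^{2d}$ rigorous from the defining bound of $\mathcal O_{\tau'}^{\tau}(\bbC^d)$, and then passing cleanly from ``nonzero entire function'' to ``zero set of measure zero on the real slice $\bbR^{2d}$''. The spectrogram factorization and the final rank-one recovery are routine ambiguity-function calculus and linear algebra, respectively.
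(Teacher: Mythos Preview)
The paper does not supply a proof of this proposition: it is quoted verbatim as Corollary~3.3 of Grohs--Liehr and used as a black box in Section~5. So there is no in-paper argument to compare against; I can only assess your proof on its own merits, and note that the ambiguity-function route you take is exactly the standard one (and is the one used in the cited reference).

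Your argument is correct in outline and in its essential step. The factorization $\mathcal F[|V_gf|^2]=\mathcal A_f\,\overline{\mathcal A_g}$ is standard; your analytic continuation of $\mathcal A_g$ to $\bbC^{2d}$ via Proposition~\ref{Prop1} is clean, and the bound you write down (Gaussian in $s$ with exponent $-2\tau'_l s_l^2+\text{linear}$, locally uniform in $(u,v)$) is exactly what is needed. The passage from ``entire, not identically zero'' to ``zero set of the restriction to $\bbR^{2d}$ is Lebesgue-null'' is also standard real-analytic geometry. One tacit assumption: you need $g\ne 0$ (otherwise $\mathcal A_g(0,0)=0$), but this is harmless since for $g\equiv 0$ the proposition is vacuous.

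The only place that needs tightening is the final paragraph. You write ``recovers $f_l(r)\overline{f_l(r-u)}$ for all $r,u$'' and then fix a point $r_0$ with $f_1(r_0)\ne 0$; but $f_l$ is only an $L^2$ class, so pointwise values are undefined. The clean fix is to argue in $L^2(\bbR^{2d})$: the inverse Fourier transform in $v$ gives $f_1(s)\overline{f_1(s-u)}=f_2(s)\overline{f_2(s-u)}$ for a.e.\ $(s,u)$; specializing to $u=0$ via continuity of $\mathcal A_{f_l}$ gives $|f_1|=|f_2|$ a.e., and then a Fubini argument on the support set shows the unimodular ratio $f_2/f_1$ is a.e.\ constant. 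This is routine, but the pointwise phrasing as written is not quite a proof.
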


\begin{theorem}\label{thm:d}
Suppose that $\tau,\tau'\in (0,\infty)^d$,
$g\in \mathcal O_{\tau'}^{\tau} (\mathbb C^d)$,
$\Lambda_l:=\{\pm\lambda_{n,l}:\, n\ge 1\}$
and $\tilde\Lambda_l:=\{\pm\tilde\lambda_{n,l}:\, n\ge 1\}$,
where $\{\lambda_{n,l}:\, n\ge 1\}$
and $\{\tilde\lambda_{n,l}:\, n\ge 1\}$
are increasing sequences of positive numbers,
$1\le l\le d$,
and
\[
  \Lambda = \prod_{l=1}^d \Lambda_l
      \times \prod_{l=1}^d \tilde\Lambda_l.
\]
If the limits
$a_l:=\lim_{n\to\infty} \frac{\lambda_{n,l}}{n^{1/2}}$
and $b_l:=\lim_{n\to\infty} \frac{\tilde\lambda_{n,l}}{n^{1/2}}$
exist for all $1\le l\le d$ and
\[
  a_l
  < \sqrt{\frac{\pi}{ \tau_l}},
  \quad
b_l< \sqrt{\frac{\tau'_l}{ \pi}},
  \quad 1\le l\le d,
\]
then $\Lambda$
does phase retrieval with respect to the window function
$g$.
\end{theorem}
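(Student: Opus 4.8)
The plan is to reduce everything to the full-sampling uniqueness result. Concretely, I would show that the hypothesis forces $|V_gf_1(t,\omega)| = |V_gf_2(t,\omega)|$ for \emph{all} $(t,\omega)\in\bbR^{2d}$; once this is in hand, Proposition~\ref{prop:p9} yields $f_1\sim f_2$ immediately. So fix $f_1,f_2\in L^2(\bbR^d)$ with $|V_gf_1|=|V_gf_2|$ on $\Lambda$, and set
\[
  \Phi(t,\omega):=|V_gf_1(t,\omega)|^2-|V_gf_2(t,\omega)|^2,
\]
which vanishes on the product grid $\Lambda=\prod_{l=1}^d\Lambda_l\times\prod_{l=1}^d\tilde\Lambda_l$. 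The task is to upgrade this to $\Phi\equiv 0$ on $\bbR^{2d}$ by extending one coordinate at a time to $\bbC$ and invoking Lemma~\ref{Lm:La} in each.

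For $f\in\{f_1,f_2\}$ write
\[
  |V_gf(t,\omega)|^2=\iint_{\bbR^d\times\bbR^d} f(x)\overline{f(y)}\,\overline{g(x-t)}\,g(y-t)\,e^{-2\pi i(x-y)\cdot\omega}\,\rmd x\,\rmd y.
\]
To extend a time variable $t_j$ holomorphically one must replace the antiholomorphic factor $\overline{g(x-t)}$ by $g^*(x-t)$, where $g^*(w):=\overline{g(\bar w)}$ is again entire, agrees with $\overline{g(\cdot)}$ on $\bbR^d$, and satisfies $|g^*(u+iv)|=|g(u-iv)|$; to extend a frequency variable $\omega_j$ no modification is needed. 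In either case, with the remaining $2d-1$ coordinates held at real values, Proposition~\ref{Prop1} applied to the integrand as a function of the single complex variable shows that $|V_gf|^2$, hence $\Phi$, extends to an entire function of that variable, and the Gaussian bounds defining $\mathcal O_{\tau'}^{\tau}(\bbC^d)$ show this extension is of order at most $2$ and of finite type. Thus Lemma~\ref{Lm:La} is applicable in each coordinate.

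The crucial point is the size of the indicators. Extending $\omega_j\mapsto\xi_j+i\eta_j$ and completing the square in the $x_j,y_j$-integrals (the decay $e^{-\tau'_j(x_j-t_j)^2}e^{-\tau'_j(y_j-t_j)^2}$ absorbing the factor $e^{2\pi(x_j-y_j)\eta_j}$) gives $|\Phi|\lesssim e^{2\pi^2\eta_j^2/\tau'_j}$, so the order-$2$ indicator obeys $H(\alpha)\le(2\pi^2/\tau'_j)\sin^2\alpha$; in particular the values at $\pm\pi/4$ and $\pm5\pi/4$ are each at most $\pi^2/\tau'_j$, so the left side of the condition in Lemma~\ref{Lm:La} is at most $2\pi^2/\tau'_j$, which is strictly less than $2\pi/b_j^2$ precisely because $b_j<\sqrt{\tau'_j/\pi}$. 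Extending $t_j\mapsto s_j+i\sigma_j$ and using $|g^*(x-t)|=|g(x-s+i\sigma)|\le Ce^{\tau_j\sigma_j^2}\prod_l e^{-\tau'_l(\cdot)^2}$ gives $|\Phi|\lesssim e^{2\tau_j\sigma_j^2}$, hence $H(\alpha)\le 2\tau_j\sin^2\alpha$ and the left side of the condition is at most $2\tau_j<2\pi/a_j^2$, precisely because $a_j<\sqrt{\pi/\tau_j}$. Thus in every coordinate the indicator inequality of Lemma~\ref{Lm:La} holds, the relevant density being $a_j$ for a time coordinate and $b_j$ for a frequency coordinate.

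The fill-in is then an induction over the $2d$ coordinates, relabelled $x_1,\dots,x_{2d}$ with sampling sequences $S_k=\{\pm\lambda_{n,k}\}$ for $1\le k\le d$ and $S_{d+l}=\{\pm\tilde\lambda_{n,l}\}$. Suppose $\Phi$ vanishes on $\bbR^{j-1}\times S_j\times\cdots\times S_{2d}$. Fixing $x_1,\dots,x_{j-1}$ at arbitrary reals and $x_{j+1},\dots,x_{2d}$ at arbitrary sample points, $\Phi$ is, as a function of $x_j$, entire of order at most $2$ and finite type, vanishes on $S_j$, and satisfies the indicator bound just established; Lemma~\ref{Lm:La} forces it to vanish identically in $x_j$. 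Letting the fixed coordinates vary gives $\Phi\equiv 0$ on $\bbR^j\times S_{j+1}\times\cdots\times S_{2d}$, and after $2d$ steps $\Phi\equiv 0$ on $\bbR^{2d}$. I expect the main obstacle to lie in the second and third steps: one must verify, uniformly in the real values at which the other coordinates are frozen, that the single-variable extension is genuinely entire with sharp exponential-square growth, handling the antiholomorphic window factor through $g^*$ and checking that the constants produced by the remaining $(2d-1)$-fold integration do not perturb the order-$2$ indicator. It is exactly the sharpness of the rates $2\pi^2/\tau'_j$ and $2\tau_j$ that makes the thresholds $\sqrt{\tau'_l/\pi}$ and $\sqrt{\pi/\tau_l}$ emerge, and getting these constants right is the delicate part.
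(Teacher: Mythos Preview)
Your proposal is correct and follows essentially the same route as the paper: extend $|V_gf|^2$ holomorphically in each variable via the $g^*(w)=\overline{g(\bar w)}$ device, bound the order-$2$ indicator by $2\tau_j\sin^2\alpha$ (time) and $(2\pi^2/\tau'_j)\sin^2\alpha$ (frequency) through completion of the square, apply Lemma~\ref{Lm:La} coordinate by coordinate to upgrade vanishing on $\Lambda$ to vanishing on $\bbR^{2d}$, and finish with Proposition~\ref{prop:p9}. The paper packages the extension as a single entire function on $\bbC^{2d}$ rather than one coordinate at a time, but the induction and the constants are identical.
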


\begin{proof}
The conclusion can be proved with
Lemma~\ref{Lm:La}  and
similar arguments
as that of \cite[Theorem 1.1]{GrohsLiehr2025}.
Here we give only a sketch.

Fix some $f\in L^2(\bbR^d)$. It was show
in \cite{GrohsLiehr2025}
that
$|V_gf(t,\omega)|^2$ extends to an entire function
$F(z, z')$ in $\mathcal O^c(\mathbb C^{2d})$, where
\[
  c = \Big(2\tau_1, \ldots, 2\tau_d, \frac{2\pi^2}{\tau'_1},\ldots, \frac{2\pi^2}{\tau'_d}\Big).
\]
We need to estimate the indicator function
of $F$ with respect to
 each variable.

Note that
\begin{align*}
|V_gf(t,\omega)|^2
  &=\Big| \int_{\bbR^d} f(u) \bar g(u-t)
        e^{-2\pi i \langle u, \omega\rangle}\rmd u\Big|^2\\
  &= \iint_{\bbR^{2d}} f(u) \bar f(v)\bar  g(u-t)  g(v-t)
        e^{-2\pi i \langle u, \omega\rangle}
        e^{2\pi i \langle v, \omega\rangle}
        \rmd u\rmd v.
\end{align*}

By Proposition~\ref{Prop1}, the last integral defines
an entire function $F_f(z,z')$ on $\mathbb C^{2d}$
if we replace $(\bar z,z')$  for $(t,\omega)$.
That is,
 $|V_gf(t,\omega)|^2$ extends to an entire
function
\[
F_f(z,z')=  \iint_{\bbR^{2d}} f(u) \bar f(v)
\overline{g(u-\bar z)}  g(v-z)
      \exp\Big\{ \sum_{l=1}^n (-2\pi i u_l z'_l
      +2\pi i v_lz'_l)\Big\}
        \rmd u\rmd v.
\]
Denote
$z=x+iy$ and $z'=x'+iy'$.
We have
\begin{align*}
|F_f(z,z')|
&\le \iint_{\bbR^{2d}} |f(u)  f(v)|
\cdot | g(u-\bar z)  g(v-z)
        e^{-2\pi i \langle u, z'\rangle}
        e^{2\pi i \langle v, z'\rangle}|
        \rmd u\rmd v\\
&\le \iint_{\bbR^{2d}} |f(u)  f(v)|
\cdot C \exp\Big\{ \sum_{l=1}^d \Big(-\tau'_l (u_l-x_l)^2
 - \tau'_l (v_l-x_l)^2 \\
 &\qquad\qquad
 +    2 \tau_l y_l^2
     +2\pi u_ly'_l
 - 2\pi  v_ly'_l \Big)
 \Big\}
        \rmd u\rmd v\\
&= \iint_{\bbR^{2d}} |f(u)  f(v)|
\cdot C \exp\bigg\{ \sum_{l=1}^d \bigg(-\tau'_l \Big(u_l-x_l-\frac{\pi y'_l}{\tau'_l}\Big)^2\\
&\qquad\qquad
   -\tau'_l \Big(v_l-x_l+\frac{\pi y'_l}{\tau'_l}\Big)^2
  + 2\tau_l y_l^2+\frac{2\pi^2}{\tau'_l} {y'_l}^2
  \bigg)\bigg\}\rmd u\rmd v\\
&\le C' \|f\|_2^2  \exp\Big\{2\tau_l y_l^2+\frac{2\pi^2}{\tau'_l} {y'_l}^2\Big\}.
\end{align*}
Hence, when other variables are fixed,
the indicator function of $F_f$ 
with respect to the variable $z_l$
satisfies
\begin{equation}\label{eq:e11}
  H_{F_f,z_l} \Big(\pm\frac{\pi}{4}\Big)
  = H_{F_f,z_l} \Big(\pm\frac{5\pi}{4}\Big)
  = \tau_l.
\end{equation}
And the indicator function  
with respect to the variable $z'_l$
satisfies
\begin{equation}\label{eq:e12}
  H_{F_f,z'_l} \Big(\pm\frac{\pi}{4}\Big)
  = H_{F_f,z'_l} \Big(\pm\frac{5\pi}{4}\Big)
  = \frac{\pi^2}{\tau'_l}.
\end{equation}

Suppose that $|V_gf_1|$ and $|V_g f_2|$ agree on $\Lambda$. Then the corresponding entire functions
$F_{f_1}$ and $F_{f_2}$ also agree on
$\Lambda$.
Fix some $\lambda_l\in \Lambda_l$ for $1\le l\le d$
and $\tilde\lambda_l\in \tilde\Lambda_l$ for
$1\le l\le d-1$. We have
for any $z'_d\in \tilde\Lambda_d$,
\[
  F_{f_1}(\lambda_1,\ldots,\lambda_d,
  \tilde \lambda_1,\ldots, \tilde \lambda_{d-1},
  z'_d)
  =
  F_{f_2}(\lambda_1,\ldots,\lambda_d,
  \tilde \lambda_1,\ldots, \tilde \lambda_{d-1},
  z'_d).
\]
It follows from (\ref{eq:e12}) and Lemma~\ref{Lm:La}
that the above equation
holds for all $ z'_d\in \mathbb C$.

Similar arguments yield that
for all $z'_{d-1},z'_d\in \mathbb C$
\[
  F_{f_1}(\lambda_1,\ldots,\lambda_d,
  \tilde \lambda_1,\ldots, \tilde \lambda_{d-2},
  z'_{d-1},z'_d)
  =
  F_{f_2}(\lambda_1,\ldots,\lambda_d,
  \tilde \lambda_1,\ldots, \tilde \lambda_{d-2},
  z'_{d-1},z'_d).
\]
By induction, we obtain that
\[
  F_{f_1}(z,z') = F_{f_2}(z,z'),\quad \forall z,z'\in\mathbb C^{2d}.
\]
By Proposition~\ref{prop:p9},
$f_1\sim f_2$. That is,
$\Lambda$
does phase retrieval with respect to the window function
$g$.
\end{proof}

For the multivariate Gaussian window function,
we have a
multivariate version of Theorem~\ref{thm:t1}.
Let
\[
  \varphi_d(x) = e^{-\pi |x|^2},\quad x\in\bbR^d
\]
be the $d$-dimensional Gaussian window function
and $V_{\varphi_d}f$ be the
 $d$-dimensional Gabor transform.

\begin{theorem}\label{thm:Gauss:d}
Let $(a_1,\ldots,a_d), (b_1,\ldots,b_d) \in(0,\infty)^d$ and
$   \Lambda=\prod_{l=1}^d a_l\mathbb{Z}^{1/2}\times \prod_{l=1}^d b_l\mathbb{Z}^{1/2}
$.
If
\[
    \max_{1\le l\le d}  \min\{a_l,b_l\}<1,
\]
then
 $\Lambda$ does Gabor phase retrieval .

Moreover, If
$a_{l_0}=b_{l_0}>1$ for some $1\le l_0\le d$,
then $\Lambda$ does not do Gabor phase retrieval.
\end{theorem}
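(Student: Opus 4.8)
The plan is to mirror the strategy behind Theorems~\ref{thm:t1} and~\ref{thm:d}, but to open up the variables one coordinate at a time, so that in each coordinate only the smaller of $a_l$ and $b_l$ must be controlled. The multivariate analogue of (\ref{eq:vphi:h}) reads $V_{\varphi_d}f(t,\omega)=e^{-\pi|t|^2}\hat h_f(z)$ with $z_l=\omega_l+it_l$, so the hypothesis $|V_{\varphi_d}f|=|V_{\varphi_d}g|$ on $\Lambda$ is equivalent to $|\hat h_f|=|\hat h_g|$ on $\Gamma^*_\Lambda=\prod_{l=1}^d\big(b_l\bbZ^{1/2}+ia_l\bbZ^{1/2}\big)$, where $\hat h_f,\hat h_g$ are entire on $\mathbb C^d$ of finite order. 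The goal is to upgrade this to $|\hat h_f|=|\hat h_g|$ on all of $\mathbb C^d$ and then invoke Proposition~\ref{prop:p9}, using that $\varphi_d\in\mathcal O_{\tau'}^{\tau}(\mathbb C^d)$ with $\tau=\tau'=(\pi,\dots,\pi)$.

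I would process the coordinates $l=1,\dots,d$ in turn; since $\max_l\min\{a_l,b_l\}<1$, each $l$ has either $a_l<1$ or $b_l<1$, and I treat the case $a_l<1$ (the case $b_l<1$ being symmetric, with real and imaginary parts interchanged). Fix the remaining coordinates at an arbitrary configuration $w$ of their current sets. For each $k\in\{1,2,3\}$, the map $t_l\mapsto|\hat h_f(\dots,b_lk^{1/2}+it_l,\dots)|^2$ extends, exactly as in the computation of $F_f$ in the proof of Theorem~\ref{thm:t2a}, to an entire function of $t_l$ of order $2$ and finite type whose indicator equals $\pi$ at $\pm\pi/4$ and $\pm5\pi/4$; the frozen coordinates $w$ are absorbed into the effective $L^2$ density and affect only the constant. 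Since the extensions for $f$ and $g$ agree on $a_l\bbZ^{1/2}$ and $a_l<1$, Corollary~\ref{Co:c1} (equivalently Lemma~\ref{Lm:La}) forces them to agree for all $t_l\in\bbR$. Hence $|\hat h_f(\cdot,w)|=|\hat h_g(\cdot,w)|$ on the three vertical lines $\Re z_l=b_lk^{1/2}$, $k=1,2,3$, whose distance ratio $(\sqrt3-1)/(\sqrt3-\sqrt2)$ is irrational.

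Next I would apply Proposition~\ref{Prop3} to the single-variable entire functions $z_l\mapsto\hat h_f(z_l,w)$ and $z_l\mapsto\hat h_g(z_l,w)$, obtaining $\hat h_f(\cdot,w)\sim\hat h_g(\cdot,w)$ and therefore $|\hat h_f(z_l,w)|=|\hat h_g(z_l,w)|$ for every $z_l\in\mathbb C$. Although the global phase produced by Proposition~\ref{Prop3} depends on $w$, I retain only the modulus identity, which is phase-blind; this is what sidesteps the phase-gluing difficulty usually present in multivariate phase retrieval. Thus coordinate $l$ is opened to all of $\mathbb C$, and the argument for the next coordinate goes through verbatim, the now-complex value of $z_l$ being harmless. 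Iterating over all $l$ yields $|\hat h_f|=|\hat h_g|$ on $\mathbb C^d$, i.e. $|V_{\varphi_d}f|=|V_{\varphi_d}g|$ on $\bbR^{2d}$, and Proposition~\ref{prop:p9} gives $f\sim g$. For the negative part I would lift the one-dimensional counterexample: when $a_{l_0}=b_{l_0}>1$, Theorem~\ref{thm:t1} provides $f_1^{(l_0)}\nsim f_2^{(l_0)}$ in $L^2(\bbR)$ with equal Gabor modulus on $a_{l_0}\bbZ^{1/2}\times b_{l_0}\bbZ^{1/2}$; tensoring with a common nonzero factor, $f_i=f_i^{(l_0)}\otimes\bigotimes_{j\ne l_0}\phi_j$, and using that the Gabor transform of a tensor product factorizes, one checks $|V_{\varphi_d}f_1|=|V_{\varphi_d}f_2|$ on the product set $\Lambda$ while $f_1\nsim f_2$.

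The main obstacle I anticipate is the bookkeeping in the positive part: verifying that the order-$2$, finite-type, indicator-$\pi$ structure of the per-coordinate modulus extension persists when the remaining coordinates are frozen at arbitrary complex values, so that Corollary~\ref{Co:c1} and Proposition~\ref{Prop3} remain applicable at every stage of the induction.
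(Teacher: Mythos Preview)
Your approach is correct; the obstacle you flag dissolves once one notices that freezing the remaining coordinates at any complex value $w$ turns $z_l\mapsto\hat h_f(z_l,w)$ into $\hat h_{\tilde f}$ for an honest $L^2(\bbR)$ function $\tilde f(s_l)=\int_{\bbR^{d-1}}f(s)\,e^{-\pi|\tilde s|^2}e^{-2\pi i\langle\tilde s,\tilde z\rangle}\,\rmd\tilde s$ (Cauchy--Schwarz against the Gaussian in $\tilde s$), so the estimate~(\ref{eq:ea3}) and hence the indicator value $\pi$ survive unchanged and Corollary~\ref{Co:c1} and Proposition~\ref{Prop3} apply at every stage. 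The paper, however, takes a different and cleaner route that avoids this bookkeeping entirely: because $\varphi_d=\bigotimes_l\varphi_1$, the Gabor transform factorises as
\[
V_{\varphi_d}f(t,\omega)=V^{(1)}_{\varphi_1}\big(V^{(d-1)}_{\varphi_{d-1}}f(\cdot;\tilde t,\tilde\omega)\big)(t_1,\omega_1)=V^{(d-1)}_{\varphi_{d-1}}\big(V^{(1)}_{\varphi_1}f(\cdot;t_1,\omega_1)\big)(\tilde t,\tilde\omega),
\]
so one fixes $(\tilde t,\tilde\omega)$ in the $(d-1)$-dimensional sublattice, invokes the one-dimensional Theorem~\ref{thm:t1} as a black box (using $\min\{a_1,b_1\}<1$) to conclude $V^{(d-1)}_{\varphi_{d-1}}f_1(\cdot;\tilde t,\tilde\omega)\sim V^{(d-1)}_{\varphi_{d-1}}f_2(\cdot;\tilde t,\tilde\omega)$ and hence equal Gabor moduli for all $(t_1,\omega_1)\in\bbR^2$, then swaps the factorisation and applies the inductive hypothesis in dimension $d-1$; all frozen variables remain real throughout. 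Your argument exposes the underlying analytic mechanism and would adapt even without the tensor structure of the window, while the paper's is shorter and reuses Theorem~\ref{thm:t1} wholesale. The negative part is handled identically in both.
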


\begin{proof}
We prove the conclusion by induction on the dimension $d$.
First, we see from Theorem~\ref{thm:t1} that it is true
for $d=1$.

Assume that the conclusion holds for the dimension
$d-1$. Let us consider the dimension $d$.
Denote $t=(t_1,\tilde t)$, where $t_1\in\bbR$
and $\tilde t \in \bbR^{d-1}$.
And $s=(s_1,\tilde s)$, $\omega=(\omega_1,\tilde\omega)$
are defined similarly.

For any $f\in L^2(\bbR^d)$ and $s_1\in\bbR$,
let $V^{(d-1)}_{ \varphi_{d-1}}f$ be the Gabor transform  of $f(s_1, \cdot)$,
that is,
\[
  V^{(d-1)}_{\varphi_{d-1}}f (s_1; \tilde t,\tilde \omega)
  = \int_{\bbR^{d-1}} f(s_1, \tilde s) \varphi_{d-1}(\tilde s - \tilde t) e^{-2\pi i \langle \tilde s, \tilde \omega\rangle } \rmd \tilde s.
\]
Applying H\"older's inequality yields that
\[
  |V^{(d-1)}_{\varphi_{d-1}}f (s_1; \tilde t,\tilde \omega)| \le \|f(s_1,\cdot)\|_2,
  \quad \forall (\tilde t,\tilde \omega)\in \bbR^{2(d-1)}.
\]
Hence for any $(\tilde t,\tilde \omega)\in \bbR^{2(d-1)}$,
the function $s_1\mapsto
 V^{(d-1)}_{\varphi_{d-1}}f (s_1; \tilde t,\tilde \omega)$ is in $L^2(\bbR)$.
Denote by $V_{\varphi_1}^{(1)}$ the Gabor transform
on $L^2(\bbR)$.
For any  $(t,\omega)\in\bbR^{2d}$, we  have
\[
  V_{\varphi_d} f(t,\omega) =
    V_{\varphi_1}^{(1)}\left (V^{(d-1)}_{\varphi_{d-1}}f (\cdot; \tilde t,\tilde \omega)\right)(t_1,\omega_1)
    =V^{(d-1)}_{\varphi_{d-1}}\left (V_{\varphi_1}^{(1)}
    f  (\cdot;t_1,\omega_1 )\right)(\tilde t,\tilde \omega).
\]

Suppose that $|V_{\varphi_d} f_1|$
and  $|V_{\varphi_d} f_2|$ agree on $\Lambda$.
Fix some $(\tilde t, \tilde \omega)$ in
$
\prod_{l=2}^d a_l\mathbb{Z}^{1/2}\times \prod_{l=2}^d b_l\mathbb{Z}^{1/2}$.
Then for any $(t_1,\omega_1)\in a_1\bbZ^{1/2}\times b_1\bbZ^{1/2}$,
\[
  \Big|V_{\varphi_1}^{(1)}\left (V^{(d-1)}_{\varphi_{d-1}}f_1 (\cdot; \tilde t,\tilde \omega)\right)(t_1,\omega_1)\Big|
  =
  \Big|V_{\varphi_1}^{(1)}\left (V^{(d-1)}_{\varphi_{d-1}}f_2 (\cdot; \tilde t,\tilde \omega)\right)(t_1,\omega_1)\Big|.
\]
We see from Theorem~\ref{thm:t1}
and the hypothesis
$\min\{a_1,b_1\}<1$ that the above equation
holds
for all $(t_1,\omega_1)$ in $\bbR^2$.
That is,
for  all $(t_1,\omega_1)$ in $\bbR^2$
and $
  (\tilde t, \tilde \omega)$ in $
\prod_{l=2}^d a_l\mathbb{Z}^{1/2}\times \prod_{2=1}^d b_l\mathbb{Z}^{1/2}$,
\[
  \Big|V^{(d-1)}_{\varphi_{d-1}}\left (V_{\varphi_1}^{(1)}f_1 (\cdot;t_1,\omega_1 )\right)(\tilde t,\tilde \omega)\Big|
  =
  \Big|V^{(d-1)}_{\varphi_{d-1}}\left (V_{\varphi_1}^{(1)}f_2 (\cdot;t_1,\omega_1 )\right)(\tilde t,\tilde \omega)\Big|.
\]
By the inductive hypothesis, we obtain that
the above equation holds for all $(t_1,\tilde t)$
and $(\omega_1,\tilde \omega)$. Hence
\[
  |V_{\varphi_d}f_1(t,\omega)|
  = |V_{\varphi_d}f_2(t,\omega)|,\quad \forall
    (t,\omega)\in\bbR^{2d}.
\]
Applying Proposition~\ref{prop:p9}
yields that
$f_1\sim f_2$.

Now suppose that $a_{l_0}=b_{l_0}>1$
for some $l_0$. Without loss of generality, we assume
that $l_0=1$. We see from the proof of Theorem~\ref{thm:t1} that there exist two functions
$f_1, f_2\in L^2(\bbR)$ such that
$|V_{\varphi_1}^{(1)}f_1|=|V_{\varphi_1}^{(1)}f_2|$ on $\bbR^2$
while $f_1\not\sim f_2$.
Take some $\tilde f \in L^2(\bbR^{d-1})\setminus\{0\}$.
Let
$F_1(s) = f_1(s_1)\tilde f(\tilde s) $
and
$F_2(s) = f_2(s_1)\tilde f(\tilde s) $.
Then $|V_{\varphi_d}F_1|
=|V_{\varphi_d}F_2|$ and
$F_1\not\sim F_2$.
\end{proof}


\end{document}